\numberwithin{equation}{section}
\newenvironment{talign*}
 {\csname align*\endcsname}
 {\endalign}
\theoremstyle{plain}
\newtheorem{thm}{Theorem}[section]
\newtheorem{lem}[thm]{Lemma}
\newtheorem{cor}[thm]{Corollary}
\newtheorem{prop}[thm]{Proposition}
\newtheorem{ques}[thm]{Question}
 \theoremstyle{definition}
\newtheorem{defn}[thm]{Definition}
\newtheorem{rem}[thm]{Remark}
\newtheorem{ex}[thm]{Example}
\newtheorem{notn}[thm]{Notation}
\newcommand{\bm}[1]{\mathbf{#1}}
\newcommand{\mb}[1]{\mathbb{#1}}
\newcommand{\mc}[1]{\mathcal{#1}}
\newcommand{\mf}[1]{\mathfrak{#1}}
\newcommand{\mr}[1]{\mathrm{#1}}
\newcommand{\vphi}{\varphi}
\newcommand{\pp}[1]{(\!({#1})\!)}
\newcommand{\bb}[1]{[\![{#1}]\!]}
\newcommand{\id}{\operatorname{id}}
\newcommand{\Char}{\operatorname{char}}
\newcommand{\Spec}{\operatorname{Spec}}
\newcommand{\Kmw}{\mathrm{K}^{\mathrm{MW}}}
\newcommand{\GW}{\operatorname{GW}}
\newcommand{\W}{\operatorname{W}}
\newcommand{\Tr}{\operatorname{Tr}}
\newcommand{\ind}{\operatorname{ind}}
\newcommand{\Vol}{\operatorname{Vol}}
\newcommand{\rank}{\operatorname{rank}}
\newcommand{\Jac}{\operatorname{Jac}}
\newcommand{\M}{\mathcal{M}_\circ}
\newcommand{\pcoor}[1]{%
  \begingroup\lccode`~=`: \lowercase{\endgroup
  \edef~}{\mathbin{\mathchar\the\mathcode`:}\nobreak}%
  [% opening symbol
  \begingroup
  \mathcode`:=\string"8000
  #1%
  \endgroup 
  ]% closing symbol
}
\begin{document}
%%%%%%%%%%%%%%%%%%%%%%%%%%%%%%%%%%%%%%%%
\title{Circles of Apollonius two ways}

\author{Stephen McKean}

\address{Department of Mathematics \\ Harvard University} 

\email{smckean@math.harvard.edu}
\urladdr{shmckean.github.io}

\subjclass[2020]{Primary: 14N15. Secondary: 14F52.}
%%%%%%%%%%%%%%%%%%%%%%%%%%%%%%%%%%%%%%%%

\begin{abstract}
Because the problem of Apollonius is generally considered over the reals, it suffers from variance of number: there are at most eight circles simultaneously tangent to a given trio of circles, but some configurations have fewer than eight tangent circles. This issue arises over other non-closed fields as well. Using the tools of enriched enumerative geometry, we give two different ways to count the circles of Apollonius such that invariance of number holds over any field of characteristic not 2. We also pose the geometricity problem for local indices in enriched enumerative geometry.
\end{abstract}

\maketitle

\section{Introduction}
Given three general circles, there are eight circles that are tangent to all three. This classical theorem, known as Apollonius's problem or the circles of Apollonius, is in fact a corollary of B\'ezout's theorem. The moduli scheme of circles that are tangent to a given circle is a quadric surface in $\mb{P}^3$, and the circles of Apollonius correspond to the $2^3$ intersection points of three quadric surfaces.

All eight circles of Apollonius are only guaranteed to exist if one works over an algebraically closed field. However, the circles of Apollonius are generally studied over the reals --- perhaps because of the nice pictures that can be drawn in this setting. For real circles of Apollonius, a famous result is that anything but seven can happen: if one ignores multiplicity, there is a configuration of three real circles with $n$ real tangent circles for each $0\leq n<7$ and $n=8$~\cite{Ped70}. Even if one counts these tangent circles with multiplicity, there are configurations with fewer than eight real tangent circles (see Figure~\ref{fig:fewer than 8}). The loss of \textit{invariance of number} over non-closed fields is a common problem in enumerative geometry. The goal of this article is to restore invariance of number for the circles of Apollonius over any field of characteristic not 2. We will achieve this by giving a weighted, bilinear form-valued count of these circles.

\begin{figure}
\centering
\begin{tikzpicture}
    \draw[very thick] (0,1/8) circle (7/8);
    \draw[very thick] (5/4,0) circle (1);
    \draw[very thick] (1,2) circle (1/2);
	\draw[color=red, very thick] (.609,1.217) circle (.375);
	\draw[color=red, very thick] (.614,1.818) circle (.926);
	\draw[color=red, very thick] (.678,-.049) circle (1.575);
	\draw[color=red, very thick] (.777,.686) circle (1.833);
    \end{tikzpicture}
    \caption{Fewer than 8 circles over $\mb{R}$}\label{fig:fewer than 8}
\end{figure}
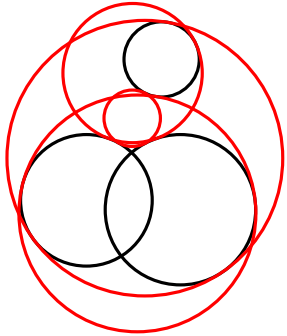

\begin{thm}\label{thm:main}
Let $k$ be a field of characteristic not 2. Let $C_1,C_2,C_3\subset\mb{P}^2_k$ be three circles whose centers do not lie on a shared line. Let $\mc{A}$ be the set of all circles that are tangent to all $C_i$. Finally, let $\mb{H}$ denote the isomorphism class of the hyperbolic bilinear form over $k$. Then each tangent circle $S\in\mc{A}$ determines an isomorphism class $\beta_S$ of bilinear forms such that
\begin{equation}\label{eq:main}
\sum_{S\in\mc{A}}\beta_S=4\mb{H}.
\end{equation}
\end{thm}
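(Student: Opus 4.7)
The plan is to realize $\mc{A}$ as the scheme-theoretic intersection of three quadric hypersurfaces in $\mb{P}^3_k$ and then apply the enriched B\'ezout theorem. First, I would parametrize circles in $\mb{P}^2_k$ by $\mb{P}^3_k$, sending $[a:b:c:d]$ to the conic $a(x^2+y^2) + bx + cy + d = 0$. This identifies the moduli of circles---including the degenerations to lines when $a=0$---with $\mb{P}^3_k$.

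Next, I would verify that for each fixed $C_i$, the locus $Q_i \subset \mb{P}^3_k$ of circles tangent to $C_i$ is a quadric hypersurface, via the standard discriminant calculation that expresses tangency of two circles as a quadratic condition in the coefficients of the variable circle. Using the non-collinearity-of-centers hypothesis, I would then argue that $Q_1,Q_2,Q_3$ meet in a zero-dimensional complete intersection.

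With the scheme-theoretic setup in hand, I would apply the enriched B\'ezout theorem to the global section of $E := \mc{O}(2)^{\oplus 3}$ on $\mb{P}^3_k$ whose vanishing scheme is $Q_1 \cap Q_2 \cap Q_3$. The bundle $E$ is relatively orientable because $\det E \otimes (\det T_{\mb{P}^3})^{-1} \cong \mc{O}(2)$ is a square. Since at least one of the three degrees is even, the enriched Euler number of $E$ in $\GW(k)$ is $\tfrac{2 \cdot 2 \cdot 2}{2}\,\mb{H} = 4\mb{H}$. Taking $\beta_S$ to be the local $\mathbb{A}^1$-degree of the section at each $S \in \mc{A}$ (e.g.\ computed via the Scheja--Storch bilinear form), the local-to-global principle yields $\sum_{S \in \mc{A}} \beta_S = 4\mb{H}$.

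I expect the main obstacle to be the tangency computation and its consequences: cleanly deriving the tangency quadric over an arbitrary field of characteristic not $2$, and carefully bookkeeping along the degenerate locus $\{a=0\} \subset \mb{P}^3_k$ so that every intersection point of the $Q_i$ corresponds to a genuine tangent circle (or is otherwise accounted for). A secondary subtlety is showing that the centers-not-collinear hypothesis is precisely what forces the three quadrics to cut out a zero-dimensional scheme, and that the local class $\beta_S$ is intrinsic to $S$ and the three given $C_i$, independent of the auxiliary parametrization of $\mb{P}^3_k$ used to write down the quadrics.
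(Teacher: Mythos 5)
Your proposal is essentially the paper's own argument. The paper parametrizes circles by $\mb{P}^3_k$ via $C(p)=\mb{V}(p_0(x^2+y^2)+z(p_1x+p_2y+p_3z))$ (Section~\ref{sec:moduli spaces}), shows the locus of circles tangent to a fixed circle is a quadric cone $Q(p)\subset\mb{P}^3_k$ (Lemma~\ref{lem:cone}), notes that $\mc{O}(2)^{\oplus 3}\to\mb{P}^3_k$ is relatively orientable with Euler class $4\mb{H}$ citing~\cite{McK21} (Proposition~\ref{prop:three circles}), and declares $\beta_S$ to be the local index of the resulting section — exactly your route via enriched B\'ezout. The only cosmetic differences are that the paper exhibits $Q(p)$ via a vertex-and-directrix construction rather than a discriminant computation, and it handles the degeneracy/transversality bookkeeping you flag by showing the cones meet transversely precisely when no two of the given circles are tangent (Proposition~\ref{prop:transverse cones}) and invoking the explicit Coaklay solutions to control the locus at infinity.
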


Taking the rank of Equation~\ref{eq:main} recovers the eight circles of Apollonius over $\overline{k}$. Other field invariants, such as signature, discriminant, and Hasse--Witt invariants, will tell us new results about the arithmetic enumerative geometry of the circles of Apollonius over fields like $\mb{R}$, $\mb{F}_q$, $\mb{Q}$, and so on.

As stated, Theorem~\ref{thm:main} is a direct corollary of the author's enrichment of B\'ezout's theorem~\cite{McK21}. The goal of this article is to give different geometric interpretations of the class $\beta_S$. The first interpretation is also a corollary of~\cite{McK21}.

\begin{thm}\label{thm:cone volume}
Assume the notation of Theorem~\ref{thm:main}. Each $C_i$ determines a quadric cone $Q_i\subset\mb{P}^3_k$. Each $S\in\mc{A}$ corresponds to an intersection point $s\in\bigcap_i Q_i$ with residue field $k(s)$. Let $\Vol(s):=\det(\nabla Q_i)|_s\in k(s)$ be the oriented volume of the parallelepiped spanned by the gradient vectors of the $Q_i$ at $s$. If no two of $C_1,C_2,C_3$ are tangent and $k(s)/k$ is separable, then $\beta_S=\Tr_{k(s)/k}\langle\Vol(s)\rangle$.
\end{thm}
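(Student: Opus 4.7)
The plan is to deduce Theorem~\ref{thm:cone volume} as a direct corollary of the author's enriched B\'ezout theorem from \cite{McK21}, applied to the three quadrics $Q_1,Q_2,Q_3\subset \mb{P}^3_k$. That result already provides the global identity $\sum_{S}\beta_S=4\mb{H}$ by summing local indices $\ind_s(Q_1,Q_2,Q_3)\in\GW(k)$ over $s\in\bigcap_i Q_i$, and, crucially for us, it supplies an explicit formula for the local index at a point $s$ where the intersection is transverse and the residue field extension $k(s)/k$ is separable: the local index equals $\Tr_{k(s)/k}\langle J(s)\rangle$, where $J(s)$ is the Jacobian determinant of $(Q_1,Q_2,Q_3)$ at $s$ in a chosen affine chart of $\mb{P}^3_k$. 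Everything in Theorem~\ref{thm:cone volume} is thus reduced to identifying $J(s)$ with $\Vol(s)$ and verifying the transversality hypothesis under the stated assumptions.

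First I would set up the correspondence between tangent circles and points of $\bigcap_i Q_i$ explicitly: parametrize circles in $\mb{P}^2_k$ by a suitable chart on $\mb{P}^3_k$ (for example, center and signed radius, or coefficients of the defining equation), and write down the quadric cone $Q_i$ cut out by the tangency condition with $C_i$. With this chart fixed, the Jacobian of $(Q_1,Q_2,Q_3)$ at $s$ is by definition the determinant of the matrix whose rows are the affine gradients $\nabla Q_i|_s$, which is exactly $\Vol(s)$. Substituting into the trace formula yields $\beta_S=\Tr_{k(s)/k}\langle\Vol(s)\rangle$. A small bookkeeping point to check is that different affine charts change $J(s)$ only by a square in $k(s)^\times$, so that the class $\langle\Vol(s)\rangle\in\GW(k(s))$ is chart-independent.

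The main obstacle is verifying that the transversality hypothesis of the enriched B\'ezout theorem actually holds at each $s$, equivalently that $\Vol(s)\neq 0$, so that $s$ is a simple complete-intersection point. Geometrically, $\Vol(s)=0$ forces the three gradient vectors $\nabla Q_i|_s$ to be linearly dependent, which happens exactly when either $s$ lies at the vertex of some $Q_i$ or when two of the cones $Q_i,Q_j$ share a tangent plane at $s$ that also contains $\nabla Q_k|_s$. I would translate both degenerations back to the plane: the vertex case forces $S$ to be a point-circle at a distinguished point of $C_i$, and the shared-tangent-plane case forces $S$ to be tangent to two $C_i$'s along the same tangent direction, which in turn forces those two $C_i$'s to be mutually tangent (at $S\cap C_i\cap C_j$). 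The hypothesis that no two of $C_1,C_2,C_3$ are tangent rules out both, and the centers-not-collinear hypothesis inherited from Theorem~\ref{thm:main} rules out any positive-dimensional component of $\bigcap_i Q_i$. Separability of $k(s)/k$ is only invoked to legitimize the passage from the intrinsic local $\GW$-class to the Jacobian trace formula, and carries no geometric content beyond that.
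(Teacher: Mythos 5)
Your overall strategy is the same as the paper's: Theorem~\ref{thm:cone volume} is read off from the enriched B\'ezout theorem of~\cite{McK21} once one knows that the three cones $Q(p_i)$ meet transversely under the hypothesis that no two of the $C_i$ are tangent. The paper has all of this packaged: Lemma~\ref{lem:cone} gives the cones, Proposition~\ref{prop:transverse cones} gives the transversality criterion, and Proposition~\ref{prop:transverse} (equivalently Theorem~\ref{thm:bezout}) gives the local index as $\Tr_{k(s)/k}\langle\Vol(s)\rangle$ at a simple zero. So the approach matches, and the substance that needs an argument is exactly the one you flag: transversality.

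That is where your sketch has a genuine gap. You assert that $\Vol(s)=0$, i.e.\ linear dependence of the three gradients, happens ``exactly when'' either $s$ lies at a vertex of some $Q_i$ or two cones $Q_i,Q_j$ share a tangent plane at $s$. This dichotomy is not exhaustive: three nonzero vectors in a 3-dimensional space can be linearly dependent with no two of them proportional (equivalently, the three tangent planes can meet in a line without any two of them coinciding). So your case analysis does not cover the general degeneration, and the geometric translation you draw from it --- ``$S$ tangent to two $C_i$'s along the same tangent direction, hence those two $C_i$'s are mutually tangent'' --- is asserted rather than derived, and only addresses the special sub-case you isolated.

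The ingredient you are missing is the identification, due to~\cite[Lemma 8.5]{EH16} as adapted in Proposition~\ref{prop:transverse cones}, that $T_s Q(p_i)$ is precisely the hyperplane $H_i\subset\mb{P}^3_k$ of circles passing through the tangency point $x_i$ of $S$ with $C_i$. With this in hand, linear dependence of the gradients is equivalent to $H_1\cap H_2\cap H_3$ being positive-dimensional, which is equivalent to the three tangency points $x_1,x_2,x_3$ not imposing independent conditions on circles, which happens exactly when two of the $x_i$ coincide; and two $x_i$ coincide iff the corresponding $C_i,C_j$ are tangent. This covers all degenerations uniformly (vertex of $Q_i$ included, since then $S=C_i$ and every $x_j$ lies on $C_i$) and completes the translation back to plane geometry. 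Without it, the step from ``$\Vol(s)=0$'' to ``two of $C_1,C_2,C_3$ are tangent'' is not established. The remark that different affine charts change the Jacobian by a square, and that separability is only used to invoke the trace formula, are both fine and match the paper's use of the $\mb{A}^1$-degree machinery.
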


Theorem~\ref{thm:cone volume} interprets $\beta_S$ as the \textit{intersection volume} of three cones at a point. While this interpretation is geometric, it is a step removed from the actual geometry of the circles of Apollonius. The following theorem gives a more intrinsic interpretation of $\beta_S$.

\begin{thm}\label{thm:alternating sum}
Assume the notation of Theorem~\ref{thm:main}. Assume that no two of $C_1,C_2,C_3$ are tangent. Let $S$ be a circle tangent to $C_1,C_2,C_3$, and assume that the field of definition $k(s)$ of $S$ is separable over $k$. Let $(a_i,b_i)$ be the center of $C_i$, $(a_s,b_s)$ the center of $S$, and $(x_i,y_i)$ the point at which $C_i$ and $S$ are tangent. Let $u_i=(a_i-x_i)(a_i-a_s)+(b_i-y_i)(b_i-b_s)$ and $v_i=(a_i-x_i)(x_i-a_s)+(b_i-y_i)(y_i-b_s)$. Finally, define
\begin{equation}\label{eq:area}
\mathrm{Area(s)}:=\sum_{\substack{\{i,m,n\}=\{1,2,3\}\\m<n}} (-1)^{i+1}u_iv_mv_n((a_m-a_s)(b_n-b_s)-(a_n-a_s)(b_m-b_s)).
\end{equation}
Then $\beta_S=\Tr_{k(s)/k}\langle\mathrm{Area(s)}\rangle$. In other words, $\beta_S$ can be interpreted as a weighted sum of the areas of the parallelograms determined by the centers of $S$ and $C_m,C_n$, where the weights record the ``direction of tangency'' of $S$ to each $C_i$.
\end{thm}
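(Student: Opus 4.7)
The strategy is to deduce Theorem~\ref{thm:alternating sum} from Theorem~\ref{thm:cone volume} by establishing the polynomial identity
\[
\Vol(s) = 64\cdot\mathrm{Area(s)}
\]
in $k(s)$. Since $\op{char}k\neq 2$, $64=8^2$ is a nonzero square in $k^\times$, so this forces $\langle\Vol(s)\rangle=\langle\mathrm{Area(s)}\rangle$ in $\GW(k(s))$, and the conclusion follows from Theorem~\ref{thm:cone volume}.

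First I would make the quadric cones explicit. Parameterize circles by $[\alpha:\beta:\gamma:\delta]\in\mb{P}^3$ via $\alpha(x^2+y^2)+\beta x+\gamma y+\delta=0$ and work in the chart $\alpha=1$. A short computation shows the tangency locus to $C_i$ is the quadric $Q_i=L_i^2-4r_i^2 r_s^2$, where $L_i:=d_i^2-r_i^2-r_s^2$ is the affine-linear power function and $d_i$ is the distance between the center of $C_i$ and the variable center. Differentiating and using $L_i^2=4r_i^2 r_s^2$ at $s$ gives
\[
\tfrac12\,\nabla Q_i\bigl|_s = L_i\,(a_i,b_i,1) + 2r_i^2\,(a_s,b_s,1).
\]
Expanding $\Vol(s)=\det(\nabla Q_i|_s)$ by multilinearity (terms with $(a_s,b_s,1)$ in two rows vanish), and recognizing $\det\bigl((a_s,b_s,1),(a_m,b_m,1),(a_n,b_n,1)\bigr)=X_{mn}$ with $X_{mn}:=(a_m-a_s)(b_n-b_s)-(a_n-a_s)(b_m-b_s)$, yields
\[
\Vol(s) = 16\bigl[L_1 L_2 L_3 T + r_1^2 L_2 L_3 X_{23} - r_2^2 L_1 L_3 X_{13} + r_3^2 L_1 L_2 X_{12}\bigr],
\]
where $2T:=\det\bigl((a_i,b_i,1)_{i=1,2,3}\bigr)$.

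Next I would compute $u_i$ and $v_i$ geometrically. Setting $\vec{A}:=(a_i-x_i,b_i-y_i)$, $\vec{B}:=(a_i-a_s,b_i-b_s)$, and $\vec{C}:=(x_i-a_s,y_i-b_s)$, one has $\vec{A}+\vec{C}=\vec{B}$, $|\vec{A}|^2=r_i^2$ (the tangent point lies on $C_i$), $|\vec{C}|^2=r_s^2$ (and on $S$), and $|\vec{B}|^2=d_i^2$. The polarization identities $2\vec{A}\cdot\vec{B}=|\vec{A}|^2+|\vec{B}|^2-|\vec{C}|^2$ and $2\vec{A}\cdot\vec{C}=|\vec{B}|^2-|\vec{A}|^2-|\vec{C}|^2$ then give
\[
u_i = \tfrac12(r_i^2+d_i^2-r_s^2) = r_i^2 + \tfrac{1}{2}L_i, \qquad v_i = \tfrac12(d_i^2-r_i^2-r_s^2) = \tfrac{1}{2}L_i.
\]
Plugging $v_i=L_i/2$ and $u_i=r_i^2+L_i/2$ into the definition of $\mathrm{Area(s)}$ and using the identity $X_{23}-X_{13}+X_{12}=2T$, the expression collapses to
\[
\mathrm{Area(s)} = \tfrac{1}{4}\bigl[L_1 L_2 L_3 T + r_1^2 L_2 L_3 X_{23} - r_2^2 L_1 L_3 X_{13} + r_3^2 L_1 L_2 X_{12}\bigr],
\]
which matches the bracket in the formula for $\Vol(s)$, yielding the identity $\Vol(s)=64\,\mathrm{Area(s)}$.

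The computation is essentially elementary. What could have been the main obstacle --- keeping track of signs corresponding to internal vs.\ external tangency, or dealing with square roots of $r_i^2$ and $r_s^2$ which need not lie in $k(s)$ --- evaporates thanks to the polarization identities, which express $u_i$ and $v_i$ directly in terms of the squared distances and radii, all manifestly in $k(s)$. With $\Vol(s)=64\,\mathrm{Area(s)}$ in hand, the hypotheses that no two $C_i$ are tangent and that $k(s)/k$ is separable are exactly what are needed to invoke Theorem~\ref{thm:cone volume}.
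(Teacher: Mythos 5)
Your proposal is correct and follows essentially the same path as the paper's proof, which proceeds via Lemma~\ref{lem:geometric interpretation}: compute $\nabla Q_i|_s$, reduce the determinant to the $3\times3$ determinant with rows $(v_i(a_i-a_s),v_i(b_i-b_s),u_i)$ up to squares, and invoke Theorem~\ref{thm:cone volume}. Your use of the affine-linear power function $L_i$ together with the polarization identities is a clean way to organize the step the paper carries out by column operations followed by direct substitution of the tangency relations $r_i^2=(x_i-a_i)^2+(y_i-b_i)^2$ and $r_s^2=(x_i-a_s)^2+(y_i-b_s)^2$, but the underlying computation and logical structure are identical.
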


This article is part of the ongoing $\mb{A}^1$-enumerative geometry program (also known as \textit{quadratic} or \textit{enriched} enumerative geometry). Using tools from motivic homotopy theory, one is able to give bilinear form-valued answers to classical questions from enumerative geometry. The advantage of these bilinear form-valued counts is that one is no longer restricted to algebraically closed fields --- taking invariants of bilinear forms gives enumerative theorems over non-closed fields. See~\cite{Bra22,CDH20,DGGM21,KW21,Lev20,LP22,LV21,McK21,Pau20,SW21} for recent work in this area.

A central problem within $\mb{A}^1$-enumerative geometry is giving geometric interpretations for local indices. Classically (i.e. over algebraically closed fields), local indices are always interpreted as an intersection multiplicity. We ask whether local indices can always be viewed geometrically. We also ask whether one can classify enumerative problems in terms of the geometric description of their local indices.

\begin{ques}[See Question~\ref{ques:geometric interpretation}]\label{ques:geometricity}
Are local indices always geometric? Can enumerative problems be classified by the ``geometric taxon'' of their local indices?
\end{ques}

In Section~\ref{sec:bezout as universal}, we give a partial answer to Question~\ref{ques:geometricity} by showing that an analog of Theorem~\ref{thm:cone volume} holds for most enumerative problems. As with Theorem~\ref{thm:cone volume}, most of these analogous theorems will not be intrinsic to the enumerative problems at hand, so Theorem~\ref{thm:alternating sum} inspires us to look for a better answer to Question~\ref{ques:geometricity}. See Appendix~\ref{sec:phylogeny} for more on this \textit{geometricity} question.

\subsection{Outline}
In Section~\ref{sec:notation}, we set some relevant notation and give a brief overview of the $\mb{A}^1$-enumerative version of B\'ezout's theorem. In Section~\ref{sec:moduli spaces}, we discuss the parameter spaces of circles in the plane and circles tangent to a given circle. In Section~\ref{sec:euler}, we apply~\cite{McK21} to compute the $\GW(k)$-valued Euler number of the problem of Apollonius. We also treat variants of the problem where a subset of the original three circles are replaced with points.

Before continuing with the circles of Apollonius, we take a brief detour in Section~\ref{sec:local contributions}. We first give some context for Question~\ref{ques:geometricity}. We then exposit the dynamic local $\mb{A}^1$-degree of Pauli and Pauli--Wickelgren in Section~\ref{sec:dynamic degree} and give an alternative construction called the \textit{familial local degree} in Section~\ref{sec:familial}, both of which allow us to show that the intersection volume from B\'ezout's theorem gives an answer to Question~\ref{ques:geometricity}. However, the intersection volume describes the geometry of parameter spaces rather than the intrinsic geometry of the objects being counted, so we hope for a better answer to Question~\ref{ques:geometricity}. We speculate about what such an answer might look like in Appendix~\ref{sec:phylogeny}.

Returning to the circles of Apollonius, we prove Theorem~\ref{thm:alternating sum} in Section~\ref{sec:local contributions apollonius} with some supporting code in Appendix~\ref{sec:sage}. In Section~\ref{sec:other invariants}, we study symmetries of the set of circles of Apollonius that arise under inversion and degeneration. Using these symmetries, we describe a conjectural procedure (dependent on a few technical assumptions) for generating new geometric interpretations for the local indices in the problem of Apollonius. Supporting code for Section~\ref{sec:other invariants} is given in Appendix~\ref{sec:degen code}.

\subsection*{Acknowledgements}
We thank Marc Levine and Kirsten Wickelgren for helpful conversations and Sabrina Pauli for a correction. The author received support from an NSF MSPRF grant (DMS-2202825) and Kirsten Wickelgren's NSF CAREER grant (DMS-1552730).

\section{Notation and background}\label{sec:notation}
Throughout this article, we let $k$ be a field with $\Char{k}\neq 2$. We denote by $k\bb{t}$ and $k\pp{t}$ the ring of power series and the field of Laurent series over $k$, respectively.

Let $\mb{P}^n_k$ be projective $n$-space over $k$. We will be working with circles in the projective plane $\mb{P}^2_k$; we denote coordinates on this projective plane by $\pcoor{x:y:z}$. We will also work with the moduli space of circles in $\mb{P}^2_k$, which is isomorphic to $\mb{P}^3_k$; we will use the coordinates $\pcoor{c_0:c_1:c_2:c_3}$ when working with $\mb{P}^3_k$. We denote the projective variety cut out by homogeneous polynomials $f_1,\ldots,f_n$ by $\mb{V}(f_1,\ldots,f_n)$.

We denote by $\GW(k)$ the Grothendieck--Witt group of isomorphism classes of symmetric, non-degenerate bilinear forms over $k$. This group is generated by the elements $\langle a\rangle$ for $a\in k^\times$, which is the isomorphism class of the form $k\times k\to k$ defined by $(x,y)\mapsto axy$. The hyperbolic form will be denoted $\mb{H}:=\langle 1\rangle+\langle -1\rangle$. If $L$ is a finite separable extension of $k$, post-composition with the field trace determines a homomorphism $\Tr_{L/k}:\GW(L)\to\GW(k)$. 

In order to make use of $\Tr_{L/k}$, we will have a running assumption that $k(q)/k$ is a separable extension for any solution $q\in\mb{P}^3_k$ to the problem of Apollonius. This separability assumption is guaranteed if $k$ is perfect, if $[k(q):k]\leq 2$ (by our assumption that $\Char{k}\neq 2$), or if $\Char{k}>8$ (since $[k(q):k]\leq 8$ by the classical version of the circles of Apollonius).

We will frequently write $\ind_p\sigma$ when discussing local indices. Here, $\sigma$ refers to a section $\sigma:\mb{P}^3\to\mc{O}(2)^{\oplus 3}$ determined by a choice of three circles in the plane. More precisely, the space of circles tangent to a given circle is isomorphic to a quadric cone in $\mb{P}^3$, and $\sigma$ cuts out the three cones determined by our given trio of circles. The point $p$, which lies in the intersection of the three cones cut out by $\sigma$, corresponds to a circle tangent to our given three circles. The Nisnevich coordinates and local trivializations necessary to make sense of this local index are provided by the author's $\mb{A}^1$-enumerative treatment of B\'ezout's theorem~\cite{McK21}.

\subsection{B\'ezout's theorem}
Classically, B\'ezout's theorem counts the number of intersections (with multiplicity) of a collection of hypersurfaces in projective space. Over a non-closed field $k$, one also has to scale the intersection multiplicities by the degree of the residue field over $k$. However, this yields the same information as B\'ezout's theorem over the algebraic closure of $k$.

In order to develop a more interesting picture of B\'ezout's theorem over non-closed fields, we replace intersection multiplicity with \textit{intersection volume}. Let $f_1,\ldots,f_n$ be homogeneous polynomials in $k[x_0,\ldots,x_n]$. Given a common solution $p$ (i.e. $p\in\bigcap_i\mb{V}(f_i)$), write $\Vol(p):=\det(\nabla f_i)|_p$, which is the oriented volume of the parallelepiped spanned by the gradient vectors of $\mb{V}(f_i)$ at $p$. If the $\mb{V}(f_i)$ meet transversely at $p$, then $\Vol(p)\neq 0$ and hence we can take $\langle\Vol(p)\rangle\in\GW(k(p))$. This gives us~\cite[Theorem 1.2]{McK21}, which we restate below. We will demonstrate how to remove the transversality hypothesis in Section~\ref{sec:dynamic degree}.

\begin{thm}\label{thm:bezout}
Let $f_1,\ldots,f_n\in k[x_0,\ldots,x_n]$ be homogeneous of degrees $d_1,\ldots,d_n$ with $\sum_{i=1}^n d_i\equiv n+1\mod 2$. Assume that the $\mb{V}(f_i)$ meet transversely, and that $k(p)/k$ is separable for each $p\in\bigcap_i\mb{V}(f_i)$. Then
\[\sum_{p\in\bigcap_i\mb{V}(f_i)}\Tr_{k(p)/k}\langle\Vol(p)\rangle=\frac{d_1\cdots d_n}{2}\mb{H}.\]
\end{thm}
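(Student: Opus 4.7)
The plan is to realize the left-hand side as an $\mb{A}^1$-Euler number. The data $(f_1, \ldots, f_n)$ defines a section $\sigma$ of the rank-$n$ bundle $E := \bigoplus_{i=1}^n \mc{O}_{\mb{P}^n}(d_i)$, whose zero scheme is exactly $\bigcap_i \mb{V}(f_i)$. For the $\mb{A}^1$-Euler number $e(E) \in \GW(k)$ to be well-defined in the Poincaré--Hopf formalism of Kass--Wickelgren and Bachmann--Wickelgren, the bundle $E$ must be relatively orientable, i.e.\ $\det(E) \otimes \omega_{\mb{P}^n}^{\vee} \cong \mc{O}(\sum d_i + n + 1)$ must be a square in $\op{Pic}(\mb{P}^n) \cong \mb{Z}$. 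This holds precisely under the hypothesis $\sum d_i \equiv n + 1 \pmod 2$, which moreover forces $d_1\cdots d_n$ to be even so that the claimed coefficient is an integer.

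With orientability in place, the enriched Poincaré--Hopf formula gives $e(E) = \sum_{p} \ind_p \sigma$, and transversality makes each zero simple. At a simple zero $p$ the local index is computed via Scheja--Storch as $\Tr_{k(p)/k}\langle J(p)\rangle$, where $J(p)$ is a Jacobian determinant in some choice of Nisnevich coordinates near $p$ together with local trivializations of $E$. The technical heart of the proof is to exhibit coordinates and trivializations for which $J(p)$ agrees, modulo squares in $k(p)^\times$, with the intrinsic quantity $\Vol(p) = \det(\nabla f_i)|_p$. Here Euler's identity for homogeneous polynomials converts the $(n+1)$-variable gradient in projective coordinates into the $n$-variable affine Jacobian on a standard chart, and the parity hypothesis supplies a globally compatible choice of local trivializations of the $\mc{O}(d_i)$.

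It remains to compute $e(E)$ globally, which I would do by choosing a convenient test section independent of the given $f_i$. A natural choice is $\tau = (g_1, \ldots, g_n)$ with each $g_i$ a product of generic linear forms in $x_0, x_i$, so that the zero locus consists of $d_1\cdots d_n$ explicit points with transparent local Jacobians. A direct calculation shows that these local indices organize into pairs $\langle a\rangle + \langle -a\rangle = \mb{H}$, yielding $e(E) = \tfrac{d_1\cdots d_n}{2}\mb{H}$. Comparing this global Euler number with the sum over the zeros of $\sigma$ then gives the theorem.

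The main obstacle is the intrinsic identification in the second paragraph: the Scheja--Storch index is a priori defined using auxiliary Nisnevich coordinates and trivializations, so one must verify that the resulting class in $\GW(k(p))$ admits the coordinate-free description $\langle\Vol(p)\rangle$ and does not depend on the choices made. Once this naturality is established, the global Euler number computation with the test section $\tau$ is essentially a bookkeeping exercise.
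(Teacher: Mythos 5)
Your outline matches the strategy the paper attributes to~\cite{McK21}, which is cited rather than reproved here: relative orientability of $\bigoplus_i\mc{O}(d_i)$ under the parity hypothesis (\cite[Proposition~3.2]{McK21}), the global Euler number $\tfrac{d_1\cdots d_n}{2}\mb{H}$ via a convenient test section (\cite[Theorem~4.4]{McK21}), and the identification of the local index at a simple zero with $\Tr_{k(p)/k}\langle\Vol(p)\rangle$ via Scheja--Storch and a careful choice of Nisnevich coordinates and trivializations (\cite[Section~5]{McK21}, together with~\cite{KW19,KW21}). One small slip: for relative orientability in the convention used here one needs $\det E\otimes\omega_{\mb{P}^n}$ (not $\omega_{\mb{P}^n}^\vee$) to be a square in $\op{Pic}$, though this yields the same parity condition $\sum d_i\equiv n+1\bmod 2$; otherwise the proposal is correct and takes essentially the same route as the cited proof.
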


There are two sides to Theorem~\ref{thm:bezout}. The global count $\frac{d_1\cdots d_n}{2}\mb{H}$ comes from computing the Euler number of the bundle $\bigoplus_i\mc{O}(d_i)\to\mb{P}^n_k$; the results in Section~\ref{sec:euler} largely follow from this computation. The local contributions $\Tr_{k(p)/k}\langle\Vol(p)\rangle$ are computed using the work of Kass--Wickelgren~\cite{KW19,KW21} (see also~\cite{BBMMO21,BMP21}). While this computation is relatively straightforward, giving a geometric interpretation of these local contributions is the interesting step. 

The main results in the present article revolve around giving geometric interpretations of the local contributions in the context of the circles of Apollonius. These geometric interpretations will reveal a paradigm not present in enumerative geometry over closed fields: the problem of Apollonius is globally a special case of B\'ezout's theorem, but these enumerative problems are distinct from the local perspective.

\section{Moduli spaces of circles}\label{sec:moduli spaces}
We begin with a discussion of circles in algebraic geometry, following \cite[Section 2.3]{EH16}. A conic in the projective plane $\mb{P}^2_k$ is given by
\[\mb{V}(p_0x^2+p_1xy+p_2xz+p_3y^2+p_4yz+p_5z^2).\]
The moduli scheme of plane conics is thus isomorphic $\mb{P}^5_k$. A circle should be a conic of the form $(x-az)^2+(y-bz)^2-r^2z^2=0$ for some $a,b,r^2\in k$. Expanding this out, we have $x^2+y^2-2axz-2byz+(a^2+b^2-r^2)z^2=0$. This leads us to the following definition.

\begin{defn}
A \textit{circle} is a conic of the form
\[\mb{V}(p_0(x^2+y^2)+z(p_1x+p_2y+p_3z)).\]
Let $\M$ be the moduli space of circles in $\mb{P}^2_k$. Given $p=\pcoor{p_0:p_1:p_2:p_3}\in\mb{P}^3_k$, let
\[C(p)=\mb{V}(p_0(x^2+y^2)+z(p_1x+p_2y+p_3z))\in\M.\] 
If $p_0=0$, we say that $C({\pcoor{0:p_1:p_2:p_3}})$ is a \textit{degenerate circle}.
\end{defn}

The definition of $C$ gives us an explicit isomorphism $\mb{P}^3_k\cong\M$.

\begin{prop}
Regarded as a map, $C:\mb{P}^3_k\to\M$ is an isomorphism.
\end{prop}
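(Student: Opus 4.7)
The plan is to exhibit $C$ as a linear isomorphism onto its image, and identify that image with $\M$. Expanding the defining equation gives
\[
p_0(x^2+y^2) + z(p_1 x + p_2 y + p_3 z) = p_0 x^2 + 0\cdot xy + p_1 xz + p_0 y^2 + p_2 yz + p_3 z^2,
\]
so under the isomorphism $\mb{P}^5_k\cong\{\text{conics in }\mb{P}^2_k\}$ with coordinates $\pcoor{q_0:q_1:q_2:q_3:q_4:q_5}$, the map $C$ factors as the composition of the linear embedding
\[
\iota:\mb{P}^3_k\hookrightarrow\mb{P}^5_k,\quad \pcoor{p_0:p_1:p_2:p_3}\mapsto\pcoor{p_0:0:p_1:p_0:p_2:p_3}
\]
with the identification of $\mb{P}^5_k$ with the moduli space of conics.

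First I would observe that $\iota$ is a well-defined morphism, since its coordinates are linear in $p_0,p_1,p_2,p_3$ and do not all vanish unless every $p_i=0$. The image of $\iota$ is cut out by the two linear equations $q_1=0$ and $q_0=q_3$, i.e.\ it is the linear subvariety $\mb{V}(q_1,\,q_0-q_3)\subset\mb{P}^5_k$. By the definition of a circle given above, a conic lies in $\M$ if and only if it satisfies exactly these two conditions on its coefficients; hence the image of $\iota$ is precisely $\M$.

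To finish, I would construct an explicit inverse: sending a circle $\mb{V}(q_0(x^2+y^2)+q_2 xz+q_4 yz+q_5 z^2)$ to $\pcoor{q_0:q_2:q_4:q_5}\in\mb{P}^3_k$ defines a morphism $\M\to\mb{P}^3_k$ (given by coordinate projection from the linear subvariety), and one checks by direct substitution that this is a two-sided inverse to $C$. Since both maps are morphisms defined by linear forms, $C$ is an isomorphism of schemes. There is no real obstacle here; the content of the proposition is just that the defining constraints on the coefficients of a circle cut out a linear $\mb{P}^3$ inside the $\mb{P}^5$ of all conics.
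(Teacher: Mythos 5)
Your proof is correct and takes essentially the same approach as the paper: both exhibit an explicit two-sided inverse to $C$ and verify that $C$ is a bijective morphism. Your version adds the (helpful) extra observation that $C$ factors as a linear embedding $\mb{P}^3_k \hookrightarrow \mb{P}^5_k$ onto the linear subspace $\mb{V}(q_1, q_0 - q_3)$, which makes the morphism property and well-definedness of both $C$ and its inverse more transparent than the paper's terser statement.
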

\begin{proof}
Note that $C(p)$ does not depend on the choice of representative of $p$, so $C:\mb{P}^3_k\to\M$ is well-defined. The (well-defined) inverse morphism $C^{-1}:\M\to\mb{P}^3_k$ is given by $C^{-1}\mb{V}(p_0(x^2+y^2)+z(p_1x+p_2y+p_3z))=\pcoor{p_0:p_1:p_2:p_3}$. One can readily check that $C\circ C^{-1}=\id_{\mb{P}^3_k}$ and $C^{-1}\circ C=\id_{\M}$.
\end{proof}

\begin{rem}\label{rem:center and radius}
If $C(p)$ is a non-degenerate circle, then we can solve for the \textit{center} and \textit{radius squared} of $C(p)$ in terms of $p$. Since $p_0\neq 0$, we have
\begin{align*}
C(p)&=\mb{V}(p_0x^2+p_0y^2+p_1xz+p_2yz+p_3z^2)\\
&=\mb{V}((x+\tfrac{p_1}{2p_0}z)^2+(y+\tfrac{p_2}{2p_0}z)^2+(\tfrac{p_3}{p_0}-\tfrac{p_1^2}{4p_0^2}-\tfrac{p_2^2}{4p_0^2})z^2),
\end{align*}
which is a circle of radius squared $r^2:=-\tfrac{p_3}{p_0}+\tfrac{p_1^2}{4p_0^2}+\tfrac{p_2^2}{4p_0^2}$ with center $\pcoor{a:b:1}:=\pcoor{-\tfrac{p_1}{2p_0}:-\tfrac{p_2}{2p_0}:1}$. We will frequently write
\begin{align*}
\tfrac{p_1}{p_0}&=-2a,\\
\tfrac{p_2}{p_0}&=-2b,\\
\tfrac{p_3}{p_0}&=a^2+b^2-r^2.
\end{align*}
\end{rem}

\begin{rem}
When considering the set of circles tangent to a given trio of circles $C_1,C_2,C_3$, we will always assume that the centers of $C_1,C_2,C_3$ are not collinear. This is a generic condition, because there is a unique line through any pair of points.
\end{rem}

\begin{defn}
The \textit{residue field} or \textit{field of definition} of a circle $C(p)\in\M$ is the residue field $k(p)$ of the point $p\in\mb{P}^3_k$. If $C(p)$ is non-degenerate, then $k(p)/k$ is the minimal field extension such that $a,b,r^2\in k(p)$. Note in particular that $r$ need not be an element of $k(p)$.
\end{defn}

\subsection{The cone of tangent circles to a given circle}\label{sec:cone of tangent circles}
Given a non-degenerate circle $C(p)\in\M$, we would like to describe the space $Q(p)\subset\M$ of circles tangent to $C(p)$. By \cite[Section 2.3.2]{EH16}, $Q(p)$ is a quadric cone in $\M$ with cone point $C(p)$. We now describe a directrix for $Q(p)$, which allows us to explicitly solve for $Q(p)$ in terms of $p$.

\begin{prop}
Let $C(p)$ be a non-degenerate circle with radius squared $r^2$. Any circle of radius squared $(2r)^2$ with center on $C(p)$ is tangent to $C(p)$. (See Figure~\ref{fig:tangent circles}.)
\end{prop}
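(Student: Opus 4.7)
The plan is to verify tangency by direct calculation, conducted entirely in terms of the squared radius so that the argument remains valid over $k$ even when $r\notin k$. Since the hypothesis only mentions $r^2$, every quantity I manipulate must appear with even degree in $r$; happily this falls out automatically.

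Using the notation of Remark~\ref{rem:center and radius}, write $(a,b)$ for the center of $C(p)$ and $(a',b')$ for the center of the hypothesized tangent circle $S$, so that $(a-a')^2+(b-b')^2=r^2$ and $S$ has squared radius $4r^2$. After translating so that $(a',b')$ sits at the origin and setting $\alpha=a-a'$, $\beta=b-b'$ (so $\alpha^2+\beta^2=r^2$), the equation of $C(p)$ expands and simplifies to $u^2+v^2=2\alpha u+2\beta v$, while $S$ is $u^2+v^2=4r^2$. Subtracting yields the radical axis $\alpha u+\beta v=2r^2$. Substituting the solved linear equation back into $u^2+v^2=4r^2$ produces a quadratic in one variable whose discriminant I would check vanishes identically (using $\alpha^2+\beta^2=r^2$), forcing the unique affine intersection point $(u,v)=(2\alpha,2\beta)$ with multiplicity two. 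Geometrically this point is the antipode of $(a',b')$ on $C(p)$.

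Alternatively, and more cleanly, one can invoke the classical tangency criterion in its symmetric form, which I would briefly derive from $d=\pm r_1\pm r_2$ by multiplying out and collecting: two circles with squared intercenter distance $d^2$ and squared radii $r_1^2,r_2^2$ are tangent exactly when $(d^2-r_1^2-r_2^2)^2=4r_1^2r_2^2$. This formulation involves only squared quantities and hence is meaningful over any $k$ of characteristic not $2$. Plugging in $d^2=r^2$, $r_1^2=r^2$, $r_2^2=4r^2$ gives $(-4r^2)^2=16r^4=4r^2\cdot 4r^2$, and tangency follows.

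The only real subtlety, and the reason a second paragraph of care is warranted at all, is that tangency must be interpreted in a manner that does not require the existence of $r$ in the base field; both approaches above handle this automatically since every occurrence of $r$ is squared. Beyond that I anticipate no serious obstacle, only routine polynomial manipulation, so I would likely present the symmetric-criterion version for brevity and remark that the explicit calculation also pinpoints the tangency point as $(2a-a',2b-b')$.
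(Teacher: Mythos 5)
Your proposal is correct, and both of your calculations do establish the claim, but the route differs from the paper's. The paper simply writes down the candidate tangency point $q=\pcoor{2a-x_0:2b-y_0:1}$ (the antipode of the center of $S$ on $C(p)$), checks directly that $q$ lies on both circles, and then computes the tangent lines $T_qC(p)$ and $T_qS$ via the gradient and shows they coincide. Your first argument instead obtains $q$ by subtracting the two circle equations to get the radical axis, substituting back, and observing that the resulting quadratic $r^2(u-2\alpha)^2=0$ has a double root. These are equivalent in content but phrase tangency differently: the paper works with ``incidence plus coincident tangent lines,'' you work with ``intersection multiplicity two.'' The discriminant computation is the cleaner way to see \emph{why} there is a unique intersection point, whereas the paper's computation is shorter once you already know the answer.

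One small caution on your second approach: deriving the symmetric tangency criterion $(d^2-r_1^2-r_2^2)^2=4r_1^2r_2^2$ from $d=\pm r_1\pm r_2$ is only a heuristic over ordered fields, since $d$, $r_1$, $r_2$ may not exist in $k$. To use it as a genuine criterion over an arbitrary field of characteristic $\neq 2$, you would want to prove it by the very radical-axis/discriminant computation from your first paragraph (one checks that the general discriminant $16\beta^2\bigl(4d^2r_1^2-(d^2+r_1^2-r_2^2)^2\bigr)$ vanishes exactly when your symmetric expression does, as the two conditions expand to the same quartic). So the second argument is an attractive repackaging, but it is not logically independent of the first; if you present only the symmetric criterion, be explicit that it is justified by the discriminant computation rather than by the signed-radius identity.

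Also note, in case you are comparing line by line: the paper's displayed verification $(2a-2x_0)^2+(2b-2y_0)^2-(4r)^2=0$ contains a typo; $(4r)^2$ should read $(2r)^2$, after which it is precisely $4\bigl((a-x_0)^2+(b-y_0)^2-r^2\bigr)=0$.
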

\begin{proof}
Let $\pcoor{a:b:1}$ be the center of $C(p)$. If $\pcoor{x_0:y_0:1}$ lies on the circle $C(p)$ (so that $(x_0-a)^2+(y_0-b)^2-r^2=0$), then $C(p)$ is tangent to $S:=\mb{V}((x-x_0z)^2+(y-y_0z)^2-(2r)^2z^2)$ at $q:=\pcoor{2a-x_0:2b-y_0:1}$. To verify that $q\in C(p)$ and $q\in S$, we simply check
\[(2a-x_0-a)^2+(2b-y_0-b)^2-r^2=(2a-2x_0)^2+(2b-2y_0)^2-(4r)^2=0.\]
To verify that $C(p)$ and $S$ are tangent at $q$, we compute the tangent spaces at $q$ using $T_q\mb{V}(f)=\mb{V}(\frac{\partial f}{\partial x}|_q\cdot x+\frac{\partial f}{\partial y}|_q\cdot y+\frac{\partial f}{\partial z}|_q\cdot z)$. Thus
\begin{align*}
T_qC(p)&=\mb{V}(2(a-x_0)\cdot x+2(b-y_0)\cdot y-2(a^2+b^2+r^2-ax_0-by_0)\cdot z),\\
T_qS&=\mb{V}(4(a-x_0)\cdot x+4(b-y_0)\cdot y-4(2r^2-x_0^2-y_0^2+ax_0+by_0)\cdot z).
\end{align*}
Substituting $2r^2=r^2+(x_0-a)^2+(y_0-b)^2$ in the defining equation for $T_qS$ shows that $T_qC(p)=T_qS$ as lines in $\mb{P}^2_k$.
\end{proof}

The family of circles of radius squared $(2r)^2$ with center on $C(p)$ will constitute our directrix for $Q(p)$.

\begin{prop}\label{prop:directrix}
Let $C(p)$ be a non-degenerate circle with center $\pcoor{a:b:1}$ and radius squared $r^2$. The family of circles of radius squared $(2r)^2$ with center on $C(p)$ is the circle
\begin{align}
\label{eq:directrix}
D:=C\mb{V}\big(&c_0((a^2+b^2+3r^2)c_0+ac_1+bc_2+c_3),\\
&c_1^2+c_2^2+4c_0((a^2+b^2-r^2)c_0+ac_1+bc_2)\big).\nonumber
\end{align}
\end{prop}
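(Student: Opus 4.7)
The plan is a direct coordinate verification. By Remark~\ref{rem:center and radius}, a circle with center $(x_0, y_0)$ and radius squared $\rho^2$ corresponds under $C$ to the moduli point $[1 : -2x_0 : -2y_0 : x_0^2 + y_0^2 - \rho^2]$, and the condition that $(x_0, y_0)$ lies on $C(p)$ is $(x_0 - a)^2 + (y_0 - b)^2 = r^2$. With these two translations in hand, the proposition becomes a claim about two explicit quadratic expressions in the variables $c_0, c_1, c_2, c_3$.

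For the inclusion of the family into $D$, I would substitute $c_0 = 1$, $c_1 = -2x_0$, $c_2 = -2y_0$, $c_3 = x_0^2 + y_0^2 - 4r^2$ into each of the two defining equations. A short expansion shows that both collapse to a scalar multiple of $(x_0 - a)^2 + (y_0 - b)^2 - r^2$, which vanishes by the hypothesis that $(x_0, y_0) \in C(p)$.

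For the reverse inclusion on the non-degenerate locus $\{c_0 \neq 0\}$, I would normalize to $c_0 = 1$ and let $a' = -c_1/2$, $b' = -c_2/2$, $(r')^2 = (c_1^2 + c_2^2)/4 - c_3$ be the center and radius squared of the associated circle. The second defining equation then simplifies directly to $(a' - a)^2 + (b' - b)^2 = r^2$, placing the center on $C(p)$; substituting this relation back into the first equation then forces $(r')^2 = 4r^2$, which is the remaining condition defining the family.

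The main subtle point is the locus $\{c_0 = 0\}$, where the first defining equation is automatically satisfied and the second becomes $c_1^2 + c_2^2 = 0$. These points parametrize degenerate circles in $\M$ and constitute the boundary of the family inside its projective closure; the proposition is then naturally read as identifying $D$ with this closure. This last point is the only place where field arithmetic (e.g.\ whether $-1$ is a square in $k$) could affect the set-theoretic picture of $D$, but it does not affect the scheme-theoretic identity.
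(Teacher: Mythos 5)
Your approach is a valid alternative to the paper's. The paper proves the proposition by rationally parametrizing $C(p)$ as $\{\pcoor{a+r\tfrac{1-t^2}{1+t^2}:b+r\tfrac{2t}{1+t^2}:1}:t\in\mb{P}^1\}$, expressing each circle $E_t$ of the family explicitly in moduli coordinates, and then writing down and manipulating the two implicit equations that eliminate $t$. You instead verify both containments by direct substitution on the affine chart $\{c_0\neq0\}$. Your computations check out: substituting $c_0=1$, $c_1=-2x_0$, $c_2=-2y_0$, $c_3=x_0^2+y_0^2-4r^2$ reduces each defining polynomial to a nonzero constant times $(x_0-a)^2+(y_0-b)^2-r^2$; conversely, the second equation rearranges to $(a'-a)^2+(b'-b)^2=r^2$ (center on $C(p)$), and plugging this into the first gives $(r')^2=4r^2$. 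Your route has two small advantages over the paper's: it avoids the parametrization of $C(p)$, which implicitly uses a chosen square root $r$ of $r^2$ and a $k$-point on $C(p)$ (neither of which need exist over a general $k$ of characteristic not $2$, although the final implicit equations are insensitive to this), and it explicitly isolates what happens at $\{c_0=0\}$. You correctly note that on that boundary the first equation is vacuous and the second cuts out $c_1^2+c_2^2=0$; the only part of your discussion that is a little loose is the gloss that $D$ is ``the projective closure'' of the family, since the image of $\mb{P}^1$ under $t\mapsto E_t$ is already proper and its closure meets $\{c_0=0\}$ only at $\pcoor{0:0:0:1}$, whereas $D\cap\{c_0=0\}$ is a pair of lines over $\overline{k}$. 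So $D$ strictly contains the family plus some extra components at infinity --- a point on which the paper's statement is itself a bit imprecise, and which is harmless for how $D$ is used (only its behavior on $\{c_0\neq0\}$ matters in Lemma~\ref{lem:cone}).
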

\begin{proof}
We obtain the defining equations for the family of circles of radius $(2r)^2$ with center on $C(p)$ by varying $\pcoor{x_0:y_0:1}\in C(p)$. Parametrically, we have $C(p)=\{\pcoor{a+r\tfrac{1-t^2}{1+t^2}:b+r\tfrac{2t}{1+t^2}:1}:t\in\mb{P}^1\}$. Let $E_t$ be the circle of radius squared $(2r)^2$ with center $\pcoor{a+r\tfrac{1-t^2}{1+t^2}:b+r\tfrac{2t}{1+t^2}:1}$. Then
\[E_t=C(\pcoor{1:-2(a+r\tfrac{1-t^2}{1+t^2}):-2(b+r\tfrac{2t}{1+t^2}):(a+r\tfrac{1-t^2}{1+t^2})^2+(b+r\tfrac{2t}{1+t^2})^2-4r^2}).\]
Let $\pcoor{c_0:c_1:c_2:c_3}$ be coordinates on $\mb{P}^3_k$. We then have the implicit description
\begin{align*}
\textstyle\bigcup_{t\in\mb{P}^1}E_t=C\mb{V}\big(&c_1^2+c_2^2-4c_0c_3-16r^2c_0^2,\\
&(c_1+2ac_0)^2+(c_2+2bc_0)^2-4r^2c_0^2\big)\\
=C\mb{V}\big(&c_1^2+c_2^2-4c_0c_3-16r^2c_0^2,\\
&c_1^2+c_2^2+4c_0((a^2+b^2-r^2)c_0+ac_1+bc_2)\big).
\end{align*}
Substituting $c_1^2+c_2^2=-4c_0((a^2+b^2-r^2)c_0+ac_1+bc_2)$, we find that $\bigcup_{t\in\mb{P}^1}E_t$ is given by Equation~\ref{eq:directrix}.
\end{proof}

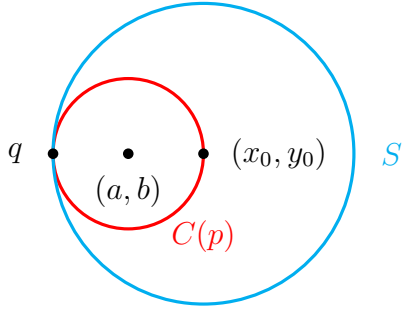
\begin{figure}
    \centering
    \begin{tikzpicture}
    \draw[color=red, very thick](0,0) circle (1);
    \node at (-1.5,0) {$q$};
    \fill (0,0) circle[radius=2pt];
    \node at (0,-1/2) {$(a,b)$};
    \draw[color=cyan, very thick](1,0) circle (2);
    \fill (1,0) circle[radius=2pt];
    \fill (-1,0) circle[radius=2pt];
    \node at (2,0) {$(x_0,y_0)$};
    \node[text=cyan] at (3.5,0) {$S$};
    \node[text=red] at (1,-1.1) {$C(p)$};
    \end{tikzpicture}
    \caption{Circle tangent to $C(p)$}
    \label{fig:tangent circles}
\end{figure}

Using the vertex $p$ and directrix from Equation~\ref{eq:directrix}, we now describe the cone $Q(p)$.

\begin{lem}\label{lem:cone}
Let $p=\pcoor{1:p_1:p_2:p_3}\in\mb{P}^3_k$. Let $\pcoor{a:b:1}$ and $r^2$ be the center and radius squared, respectively, of $C(p)$. Then 
\begin{align*}
Q(p)=C\mb{V}\left((aX+bY+Z)^2-r^2(X^2+Y^2)\right),
\end{align*}
where $X=c_1-p_1c_0$, $Y=c_2-p_2c_0$, and $Z=c_3-p_3c_0$.
\end{lem}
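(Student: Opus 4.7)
The plan is to verify directly that the proposed quadric $F := (aX+bY+Z)^2 - r^2(X^2+Y^2)$ encodes the classical tangency condition between two circles. By \cite[Section 2.3.2]{EH16}, $Q(p)$ is already known to be a quadric hypersurface in $\M \cong \mb{P}^3_k$, and $F$ is manifestly quadratic in $(c_0, c_1, c_2, c_3)$, so it suffices to show that $\mb{V}(F)$ coincides with $Q(p)$ on the dense open locus $c_0 \neq 0$ parametrizing non-degenerate circles.

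On the affine chart $c_0 = 1$, a non-degenerate circle $C(q)$ has center $(a', b')$ and radius squared $r'^2$ (Remark~\ref{rem:center and radius}), i.e., $c_1 = -2a'$, $c_2 = -2b'$, $c_3 = a'^2 + b'^2 - r'^2$. Using the hypothesis $p = \pcoor{1:-2a:-2b:a^2+b^2-r^2}$, a short computation gives $X = 2(a - a')$, $Y = 2(b - b')$, and
\[ aX + bY + Z = (a-a')^2 + (b-b')^2 + r^2 - r'^2 = d^2 + r^2 - r'^2, \]
where $d^2 := (a-a')^2 + (b-b')^2$ denotes the squared distance between the two centers. Furthermore, $X^2 + Y^2 = 4d^2$. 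Hence $F = 0$ becomes $(d^2 + r^2 - r'^2)^2 = 4r^2 d^2$, which factors as
\[ (d-r-r')(d-r+r')(d+r-r')(d+r+r') = 0. \]
This is precisely the classical algebraic condition that two circles of radii $r, r'$ whose centers lie at distance $d$ are tangent (internally or externally).

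Thus $\mb{V}(F)$ and $Q(p)$ agree on the dense open $c_0 \neq 0$. Since both are quadric hypersurfaces in $\mb{P}^3_k$ and they agree on a Zariski dense subset, they must define the same subscheme, yielding the claimed description. The only step requiring any care is the simplification of $aX + bY + Z$: the cross-terms $-2aa'$, $-2bb'$ and the constants $a^2$, $b^2$, $r^2$ must be collected correctly to produce the perfect-square form $d^2 + r^2 - r'^2$. Once this simplification is in hand, the factorization of $(d^2 + r^2 - r'^2)^2 - 4r^2 d^2$ into the four linear tangency factors is automatic and there is no real obstacle.
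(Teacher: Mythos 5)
Your proof is correct and takes a genuinely different route from the paper. The paper proceeds constructively: it writes down the general equation of a quadric cone with vertex $p$, uses the explicit directrix of Proposition~\ref{prop:directrix} (the one-parameter family of circles of radius $2r$ centered on $C(p)$) to impose linear conditions on the unknown coefficients $A_1,\ldots,A_6$, and then solves. You instead \emph{verify} the answer by restricting $F = (aX+bY+Z)^2 - r^2(X^2+Y^2)$ to the affine chart $c_0 = 1$ and showing that it collapses to the classical tangency identity
\[
(d^2 + r^2 - r'^2)^2 - 4r^2d^2 = (d-r-r')(d-r+r')(d+r-r')(d+r+r') = 0,
\]
where $d^2$ is the squared distance between centers --- a clean and memorable reduction. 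Your computation of $aX+bY+Z = d^2 + r^2 - r'^2$ and $X^2 + Y^2 = 4d^2$ is correct, and the density argument closes the gap between the chart and the full projective statement. The trade-off: your proof is shorter and more illuminating in that it makes the tangency condition transparent, but it relies on already knowing the target formula (it is a verification, not a derivation), whereas the paper's directrix method produces the formula from scratch. One small point worth flagging: to promote ``agreement as sets on $\{c_0 \neq 0\}$'' to ``equality of subschemes,'' you implicitly need both quadrics to be irreducible (rank $\geq 3$), which holds since the symmetric matrix of $F$ in the variables $X, Y, Z$ has determinant $r^4 \neq 0$; this is worth stating, since for $r^2 = 0$ the quadric degenerates to a double plane and the set-theoretic argument no longer determines the scheme structure (this case is handled separately in Remark~\ref{rem:doubled}).
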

\begin{proof}
A cone in $\mb{P}^3_k$ with vertex $\pcoor{1:0:0:0}$ is given by the vanishing of $A_1c_1^2+A_2c_2^2+A_3c_3^2+A_4c_1c_3+A_5c_2c_3+A_6c_1c_2$ for some $A_1,\ldots,A_6$. In order to translate the vertex to $\pcoor{1:p_1:p_2:p_3}$, we replace $c_1$, $c_2$, and $c_3$ with $X$, $Y$, and $Z$, respectively. Next, we use the directrix for $Q(p)$ from Proposition~\ref{prop:directrix} to solve for $A_1,\ldots,A_6$. We will work in the open affine $\{c_0\neq 0\}\subset\mb{P}^3_k$, after which we will homogenize to obtain the desired equation for $Q(p)$.

On $\{c_0\neq 0\}$, Equation~\ref{eq:directrix} is defined by a circle on the hyperplane $\mb{V}((a^2+b^2+3r^2)c_0+ac_1+bc_2+c_3)$. This hyperplane allows us to set $Z|_D=-(a^2+b^2+3r^2+p_3)c_0-ac_1-bc_2$. Remark~\ref{rem:center and radius} implies that $p_1=-2a$, $p_2=-2b$, and $p_3=a^2+b^2-r^2$, so
\begin{align*}
Z|_D&=-2(a^2+b^2+r^2)c_0-ac_1-bc_2\\
&=-a(c_1+2ac_0)-b(c_2+2bc_0)-2r^2c_0\\
&=-aX-bY-2r^2c_0.
\end{align*}
We conclude by expanding $A_1X^2+A_2Y^2+A_3Z|_D^2+A_4XZ|_D+A_5YZ|_D+A_6XY$ and substituting $X=c_1+2ac_0$ and $Y=c_2+2bc_0$. Comparing to the coefficients of the directrix equation
\[c_1^2+c_2^2+4c_0((a^2+b^2-r^2)c_0+ac_1+bc_2)\]
allows us to solve for $A_1,\ldots,A_6$. We include some Sage code in Appendix~\ref{sec:sage} to perform the algebraic manipulations for us.
\end{proof}

Transversality is a generic condition, so a general choice of circles $C(p_1),C(p_2),C(p_3)$ will result in the cones $Q(p_1),Q(p_2),Q(p_3)$ meeting transversely. In fact, we can even characterize when these cones meet transversely in terms of our choice of circles.

\begin{prop}\label{prop:transverse cones}
The triple of cones $Q(p_1),Q(p_2),Q(p_3)$ meet transversely if and only if no two circles among $C(p_1),C(p_2),C(p_3)$ are tangent.
\end{prop}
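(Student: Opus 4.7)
My plan is to reduce the transversality of the three cones to the absence of shared ruling lines between pairs of cones, and then identify shared rulings with pairwise tangency of the given circles. The foundational observation is geometric: since $Q(p)$ is a quadric cone with vertex $p$, any line it contains must pass through $p$, so a line lying in both $Q(p_i)$ and $Q(p_j)$ must equal $\ell_{ij} := \langle p_i, p_j \rangle$. By the definition of $Q$, the line $\ell_{ij}$ lies in both cones if and only if $p_i \in Q(p_j)$, which is precisely the condition that $C(p_i)$ and $C(p_j)$ be tangent.

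The heart of the proof will be a \emph{tangency lemma}: if $C_1$ and $C_2$ are tangent, then $Q_1$ and $Q_2$ share the same tangent plane at every point of $\ell := \ell_{12}$, not merely the line $\ell$ itself. Writing $M_i$ for the symmetric matrix of $Q_i$ so that $\nabla Q_i|_q = 2 M_i q$, and using $M_i p_i = 0$, this claim reduces to showing that $M_1 p_2$ and $M_2 p_1$ are proportional as vectors in $k^4$. I plan to verify this by direct computation using the explicit formulas from Lemma~\ref{lem:cone}, with the tangency condition $(a_1 - a_2)^2 + (b_1 - b_2)^2 = (r_1 \pm r_2)^2$ as the crucial algebraic input. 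Geometrically, both polar hyperplanes encode the linearized tangency condition at the common tangency point $T$ of $C_1$ and $C_2$ for circles in the pencil $\ell$, and this condition coincides for $C_1$ and $C_2$ because they share a tangent line at $T$.

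For the forward direction (tangency implies non-transverse), suppose $C_1 \sim C_2$. By the tangency lemma, $\nabla Q_1(q)$ and $\nabla Q_2(q)$ are proportional for every $q \in \ell$. The non-collinearity of centers forces $p_3 \notin \ell$, hence $\ell \not\subset Q_3$, so B\'ezout gives two points (counted with multiplicity) in $\ell \cap Q_3 \subset Q_1 \cap Q_2 \cap Q_3$; at any such point, all three gradients are linearly dependent, so transversality fails. For the converse, assume no pair is tangent. Then no two cones share a ruling, so each pairwise intersection is a proper degree-4 curve and $\bigcap Q_i$ has total length 8 by B\'ezout. No intersection point can be a vertex $p_i$, since this would place $p_i$ on both $Q_j$ and $Q_k$ and thereby force two tangencies. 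The classical theorem of Apollonius over $\ovl{k}$ then provides 8 distinct tangent circles under our hypotheses, so the 8 intersection points are reduced, yielding transversality.

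The main obstacle is the tangency lemma. Two generic quadric cones in $\mb{P}^3$ sharing a common ruling need not be tangent along it --- the tangent planes along the shared ruling can be distinct --- so the proportionality of $M_1 p_2$ and $M_2 p_1$ relies essentially on the special structure of circle-tangency cones from Lemma~\ref{lem:cone}. I expect to establish it either via a direct bilinear-form computation or more conceptually by the polarity argument sketched above.
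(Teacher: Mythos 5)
Your forward direction is correct and takes a genuinely different route from the paper. You localize along the shared ruling $\ell_{12} = \langle p_1, p_2\rangle$ and argue that the two cones have a common tangent plane along it (your ``tangency lemma''); that lemma is true and is essentially the statement that $T_{p_2}Q(p_1) = T_{p_1}Q(p_2)$ is the plane of circles through the common tangency point of $C(p_1)$ and $C(p_2)$, so your planned verification should go through. The paper instead works pointwise at each $q \in \bigcap_i Q(p_i)$: by a modification of~\cite[Lemma 8.5]{EH16}, the tangent plane $T_q Q(p_i)$ is the plane $H_i$ of circles through the point $x_i$ where $C(q)$ touches $C(p_i)$; the cones meet transversely at $q$ iff $\bigcap_i H_i$ is a single point, iff $x_1,x_2,x_3$ are pairwise distinct, iff no two of the $C(p_i)$ are tangent. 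This treats both implications simultaneously and avoids any global length count.

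The genuine gap is in your converse. The sentence ``the classical theorem of Apollonius over $\ovl{k}$ then provides 8 distinct tangent circles under our hypotheses'' is doing all the work, and there is no such independent theorem to invoke: the assertion that the eight solutions are pairwise distinct over $\ovl{k}$ whenever no two of the given circles are tangent is exactly the transversality statement you are trying to establish, so the argument as written is circular. Your reduction to shared rulings detects only those failures of transversality that arise from a common \emph{line} on two of the cones; it does not by itself exclude a point $q\in\bigcap_i Q(p_i)$ at which $\nabla Q(p_1),\nabla Q(p_2),\nabla Q(p_3)$ become linearly dependent without any two cones sharing a ruling. To repair the converse you need a pointwise criterion for nontransversality, which is precisely what the $H_i$-characterization in the paper supplies: nontransversality at $q$ forces $x_i = x_j$ for some $i \neq j$, and that in turn forces $C(p_i)$ and $C(p_j)$ to be tangent (they share the tangent line to $C(q)$ at $x_i$). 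If you prove and use that lemma, your forward direction can be kept as is for its geometric appeal, but the converse should be replaced by the contrapositive of the pointwise criterion.
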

\begin{proof}
By modifying the details of~\cite[Lemma 8.5]{EH16} to the case of circles, one can show that if a circle $C(q)$ has a point $x_i$ of simple tangency with $C(p_i)$, then the tangent plane $T_q Q(p_i)$ is the plane $H_i\subset\mb{P}^3_k$ of circles through $x_i$. Three 2-planes in $\mb{P}^3_k$ meet transversely at $q$ if and only if their intersection consists of a single point. That is, $Q(p_1),Q(p_2),Q(p_3)$ meet transversely if and only if $\bigcap_i H_i$ consists of a single point. 

Since $\M\cong\mb{P}^3_k$, there are finitely many circles through any three points in $\mb{P}^2_k$. In fact, there is a unique circle through any three points, as this coincides with the intersection of three 2-planes. (Note that if our three points are colinear, then their shared line is a circle of infinite radius.) In particular, $\bigcap_i H_i$ consists of a single point if and only if no circle can be tangent to two of $C(p_1),C(p_2),C(p_3)$ at a single point. We conclude by remarking that a circle can be tangent to two of $C(p_1),C(p_2),C(p_3)$ at a single point if and only if two of these circles are tangent.
\end{proof}

\subsection{The plane of circles through a point}\label{sec:circles through point}
Given a point $q=\pcoor{a:b:1}\in\mb{P}^2_k$ away from the line at infinity, we would like to describe the space $V(q)\subset\M$ of circles through $q$. In fact, any point in $\mb{P}^2_k$ determines an element of $V(q)$, so $V(q)$ is a hyperplane in $\M$.

\begin{lem}\label{lem:plane}
Let $q=\pcoor{a:b:1}\in\mb{P}^2_k$. Then
\[V(q)=C\mb{V}((a^2+b^2)c_0+ac_1+bc_2+c_3).\]
\end{lem}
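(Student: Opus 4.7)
The plan is to reduce the statement to a direct substitution using the explicit description of circles given in the previous definition. By definition, a circle $C(p)$ is the projective vanishing locus of the polynomial $F_p(x,y,z) := p_0(x^2+y^2)+z(p_1x+p_2y+p_3z)$. A point $q\in\mb{P}^2_k$ lies on $C(p)$ if and only if $F_p$ vanishes at $q$. Since the isomorphism $C:\mb{P}^3_k\to\M$ is tautological, the condition that $C(p)$ pass through a fixed $q$ is simply a linear condition on the coefficients $p_i$, which will cut out a hyperplane in the parameter space $\mb{P}^3_k$.

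Concretely, I would fix $q=\pcoor{a:b:1}$ and evaluate $F_p(a,b,1)$, obtaining
\[
F_p(a,b,1) = p_0(a^2+b^2)+a p_1+b p_2+p_3.
\]
This is linear in $(p_0,p_1,p_2,p_3)$, so the locus of $p\in\mb{P}^3_k$ satisfying $F_p(a,b,1)=0$ is exactly the hyperplane $\mb{V}((a^2+b^2)c_0+ac_1+bc_2+c_3)$ when we rename the coordinates $(p_0,p_1,p_2,p_3)$ to $(c_0,c_1,c_2,c_3)$. Applying the isomorphism $C$ identifies this hyperplane with the set $V(q)\subset\M$ of circles through $q$, giving the claimed equality.

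There is no real obstacle here: the argument is a one-line substitution together with an appeal to the explicit isomorphism $C:\mb{P}^3_k\to\M$ established earlier. The only mild subtlety worth remarking on is that the statement implicitly uses the open chart on $\mb{P}^2_k$ where $z\neq 0$ (so that $q$ can be written with final coordinate $1$); for this lemma no further generality is needed, since only affine points of the plane arise when we later intersect $V(q)$ with the cones $Q(p_i)$ to study circles of Apollonius where one of the data is a point.
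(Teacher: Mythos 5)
Your proof is correct, and it takes a slightly different (and arguably cleaner) route than the paper's. The paper parameterizes non-degenerate circles through $q$ by their centers $\pcoor{A:B:1}$, substitutes the constraint $r^2=(a-A)^2+(b-B)^2$ into the coordinates of $\M$, and reads off the implicit hyperplane equation from the resulting parametrization. You instead observe that the incidence condition ``$q\in C(p)$'' is literally the vanishing of $F_p(a,b,1)=p_0(a^2+b^2)+ap_1+bp_2+p_3$, which is manifestly linear in the coordinates of $\mb{P}^3_k$. Your substitution-based argument has the small advantage of handling degenerate circles ($p_0=0$) in $\M$ with no extra words, whereas the paper's parametrization only covers the non-degenerate locus and implicitly relies on density to conclude; both are fine for the purpose at hand.
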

\begin{proof}
The radius squared of any circle through $q$ is determined by its center. The circle through $q$ with center $\pcoor{A:B:1}$ and radius squared $r^2$ satisfies $(a-A)^2+(b-B)^2=r^2$, so this circle is given by
\begin{align*}
&\quad\ C(\pcoor{1:-2A:-2B:A^2+B^2-r^2})\\
&=C(\pcoor{1:-2A:-2B:A^2+B^2-(a-A)^2-(b-B)^2})\\
&=C(\pcoor{1:-2A:-2B:2Aa+2Bb-a^2-b^2}).
\end{align*}
The space of all such circles is defined implicitly by $C\mb{V}((a^2+b^2)c_0+ac_1+bc_2+c_3)$.
\end{proof}

\begin{rem}\label{rem:doubled}
A point $\pcoor{a:b:1}$ in $\mb{P}^2_k$ can be regarded as a circle with center $\pcoor{a:b:1}$ and radius squared 0. Under this perspective, the cone $Q(\pcoor{1:-2a:-2b:a^2+b^2})$ degenerates to the double plane $C\mb{V}((aX+bY+Z)^2)$, which is the plane $V(\pcoor{a:b:1})$ doubled.
\end{rem}

\section{Euler classes and relative orientability}\label{sec:euler}
In this section, we compute the fixed count of circles of Apollonius via the Euler class. There are several variants to the circles of Apollonius, because there are two ways in which a circle in $\mb{P}^2_k$ can differ from the non-degenerate circles we have considered thus far. First, a degenerate circle (i.e. a circle of the form $C(\pcoor{0:p_1:p_2:p_3})$) is a union of the line $\mb{V}(z)$ at infinity with another line in $\mb{P}^2_k$. Second, a non-degenerate circle with radius squared 0 is a point. One can thus ask how many circles are tangent to a given set of three objects, where each object may be a circle, line, or point. For simplicity, we will not consider any cases including lines (i.e. degenerate circles).

Each variant of the circles of Apollonius corresponds to studying the intersections of three hypersurfaces, each of the form $Q(p)$ or $V(p)$, in $\mb{P}^3_k$. The defining polynomials for $Q(p)$ and $V(p)$ described in Lemmata~\ref{lem:cone} and~\ref{lem:plane} will be used to determine a section $\sigma:\mb{P}^3_k\to\mc{O}(d_1)\oplus\mc{O}(d_2)\oplus\mc{O}(d_3)$, where each $d_i=1$ or 2. Each of these situations is a special case of B\'ezout's theorem \cite{McK21}. In this section, we will discuss the Euler class $e(\mc{O}(d_1)\oplus\mc{O}(d_2)\oplus\mc{O}(d_3))$ for each of these cases. In Section~\ref{sec:local contributions apollonius}, we will compute the local index $\ind_q\sigma$ \cite[Definition 30]{KW21} of our section at any tangent circle $C(q)$. This local index will give a new invariant on the circles of Apollonius.

\subsection{CCC}
Suppose we are given three general circles $C(p_1),C(p_2),C(p_3)\subset\mb{P}^2_k$. The set of circles tangent to these three circles are given by the intersection $Q(p_1)\cap Q(p_2)\cap Q(p_3)$. That is, we are intersecting three degree 2 hypersurfaces in $\mb{P}^3_k$. This is a special case of B\'ezout's theorem, with the defining equations of $Q(p_1),Q(p_2),Q(p_3)$ determining a section of $\mc{O}(2)^{\oplus 3}\to\mb{P}^3_k$.

\begin{prop}\label{prop:three circles}
The bundle $\mc{O}(2)^{\oplus 3}\to\mb{P}^3_k$ is relatively orientable with Euler class $4\mb{H}$.
\end{prop}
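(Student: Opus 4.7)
The proof splits into two independent checks: relative orientability of the bundle, and a computation of its $\GW(k)$-valued Euler class. Both are essentially bookkeeping on top of the machinery already recalled in Section~\ref{sec:notation}, so there is no real obstacle — the main work is just making sure the numerics line up.

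For relative orientability, recall that a rank-$n$ bundle $E\to X$ over a smooth $n$-dimensional variety is relatively orientable when $\det(E)\otimes\det(TX)^{-1}$ admits a square root in $\mathrm{Pic}(X)$. On $X=\mb{P}^3_k$ we have $\det(TX)=\mc{O}(4)$, and $\det(\mc{O}(2)^{\oplus 3})=\mc{O}(6)$, so the relevant line bundle is $\mc{O}(2)\cong \mc{O}(1)^{\otimes 2}$. Thus a canonical square root exists, and the bundle is relatively orientable. This also fixes the canonical element $\mc{O}(1)$ that will be used implicitly when identifying the Euler class with an element of $\GW(k)$.

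For the Euler class, note that the rank of $\mc{O}(2)^{\oplus 3}$ equals $\dim\mb{P}^3_k=3$, and the degrees $(d_1,d_2,d_3)=(2,2,2)$ satisfy the parity condition $\sum d_i = 6 \equiv 4 \equiv n+1 \pmod{2}$ required by Theorem~\ref{thm:bezout}. That theorem (or, equivalently, the underlying Euler class computation in~\cite{McK21}, built on~\cite{KW21}) identifies the Euler class of $\bigoplus_{i=1}^n \mc{O}(d_i) \to \mb{P}^n_k$ under this parity hypothesis with $\tfrac{d_1\cdots d_n}{2}\mb{H}$. Substituting yields
\[ e(\mc{O}(2)^{\oplus 3}) = \tfrac{2\cdot 2\cdot 2}{2}\,\mb{H} = 4\mb{H},\]
as claimed. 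If one prefers a hands-on verification rather than invoking Theorem~\ref{thm:bezout}, the same answer is obtained by exhibiting a transverse global section of $\mc{O}(2)^{\oplus 3}$ whose eight zeros can be grouped into four Galois- or sign-conjugate pairs, each contributing $\mb{H}$ under the Kass--Wickelgren local index formula; any generic triple of quadrics whose eight intersection points split into orbits of size two suffices, e.g.\ $(c_0^2 - c_1^2,\ c_1^2 - c_2^2,\ c_2^2 - c_3^2)$ after a small perturbation to achieve transversality.
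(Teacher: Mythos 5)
Your proof is correct and is essentially the same argument the paper gives, just unpacked: the paper cites \cite[Proposition 3.2]{McK21} for relative orientability and \cite[Theorem 4.4]{McK21} for the Euler class, while you carry out the underlying computations directly — $\det V\otimes\omega_{\mb{P}^3_k}=\mc{O}(6)\otimes\mc{O}(-4)=\mc{O}(2)\cong\mc{O}(1)^{\otimes 2}$ for relative orientability, and substitution into $\tfrac{d_1d_2d_3}{2}\mb{H}$ for the Euler class after checking the parity hypothesis $\sum d_i\equiv n+1\pmod 2$. One small correction to your closing alternative: the section $(c_0^2-c_1^2,\ c_1^2-c_2^2,\ c_2^2-c_3^2)$ is already transverse when $\Char{k}\neq 2$, vanishing exactly at the eight $k$-rational points $[1:\pm 1:\pm 1:\pm 1]$ with Jacobian determinant $\pm 8\neq 0$ there; no perturbation is needed, and the eight local indices $\langle\pm 8\rangle$ sum to $4\mb{H}$ as desired.
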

\begin{proof}
The relative orientability is given by \cite[Proposition 3.2]{McK21}, and the Euler class is computed in \cite[Theorem 4.4]{McK21}.
\end{proof}

\subsection{CCP}\label{sec:CCP}
Suppose we are given two general circles $C(p_1),C(p_2)\subset\mb{P}^2_k$ and a point $p_3\in\mb{P}^2_k$. If we consider $p_3$ as a circle of radius squared 0, then we can again use Proposition~\ref{prop:three circles} to check relative orientability and compute the Euler class. However, the local indices in this context fail to be interesting.

\begin{prop}\label{prop:CCP gives H}
Suppose $C(q_1)$ and $C(q_2)$ are non-degenerate circles with non-zero radius squared, and suppose $C(q_3)$ is a non-degenerate circle with radius squared 0. Suppose $Q(q_i)_{\mr{red}}$ intersect transversely at a point $q$ with $k(q)/k$ a separable extension. Then $\ind_q\sigma=\Tr_{k(q)/k}\mb{H}$.
\end{prop}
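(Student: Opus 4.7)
The plan is to leverage the degeneration of $Q(q_3)$ into a doubled hyperplane, then reduce the local index computation to the model fact that $y\mapsto y^2$ has local $\mb{A}^1$-degree $\mb{H}$.

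By Remark~\ref{rem:doubled}, the hypothesis that $C(q_3)$ is non-degenerate with radius squared $0$ forces $Q(q_3)$ to be the doubled hyperplane $V(q_3)$, so the defining equation of $Q(q_3)$ in $\mb{P}^3_k$ has the form $L^2$ for a linear form $L$ cutting out $V(q_3)$. Locally at $q$, the section $\sigma$ is therefore represented (modulo trivialization units) by the triple $(f_1,f_2,L^2)$, where $f_i$ is the defining quadric of $Q(q_i)$ for $i=1,2$. The transversality assumption on the reduced cones at $q$ says that the images of $f_1,f_2,L$ in $\mf{m}_q/\mf{m}_q^2$ are linearly independent over $k(q)$, so $(f_1,f_2,L)$ is a regular system of parameters at $q$. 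Since $\mb{P}^3_k$ is smooth, this regular system extends to Nisnevich local coordinates $(y_1,y_2,y_3)=(f_1,f_2,L)$ at $q$, compatible with the framing machinery from~\cite{McK21}.

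In these coordinates, the section becomes the coordinate-separated map $(y_1,y_2,y_3^2)$ on $\mb{A}^3_{k(q)}$, up to unit rescalings coming from the trivializations of $\mc{O}(2)^{\oplus 3}$. Applying the multiplicativity of local $\mb{A}^1$-degrees for products of maps together with the Kass--Wickelgren computation that $\deg^{\mb{A}^1}_0(y\mapsto y^2)=\mb{H}$ (equivalently, the Scheja--Storch form on $k(q)[y]/(y^2)$ has Gram matrix $\left(\begin{smallmatrix}0 & 1\\ 1 & 0\end{smallmatrix}\right)$ via the Bezoutian $x+y$), one obtains
\[
\deg^{\mb{A}^1}_0(y_1,y_2,y_3^2)=\langle 1\rangle\cdot\langle 1\rangle\cdot\mb{H}=\mb{H}.
\]
Since $k(q)/k$ is separable, descending via the trace yields $\ind_q\sigma=\Tr_{k(q)/k}\mb{H}$.

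The main obstacle will be confirming that the unit factors introduced by the trivializations of $\mc{O}(2)^{\oplus 3}$ (and by any rescaling needed to promote a regular system of parameters to Nisnevich coordinates in the sense of~\cite{McK21}) do not alter the final class. The saving feature, which does not hold in the transverse case of Theorem~\ref{thm:bezout}, is that $\mb{H}$ is absorbing in $\GW(k(q))$: for any $u\in k(q)^\times$ we have $\langle u\rangle\mb{H}=\langle u\rangle+\langle -u\rangle=\mb{H}$. Thus the square factor $L^2$ kills all trivialization ambiguity, and the local index is forced to equal $\Tr_{k(q)/k}\mb{H}$, in contrast with the interesting unit $\Vol(q)$ governing the CCC situation of Theorem~\ref{thm:cone volume}.
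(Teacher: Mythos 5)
Your proposal is correct, and it takes a genuinely different route from the paper. The paper proceeds abstractly: it reduces to a $k$-rational point via~\cite[Theorem~1.3]{BBMMO21}, observes that the transversality of the reduced cones together with the doubling of $Q(q_3)$ forces the intersection multiplicity at $q$ to be 2, deduces that $\rank(\ind_q\sigma)=2$ from~\cite[Proposition~5.2]{McK21}, and then invokes a structure theorem~\cite[Theorem~2]{QSW21} to conclude that a rank-two local index must be hyperbolic. Your argument instead computes the local degree directly: you choose the regular system of parameters $(f_1,f_2,L)$ furnished by transversality so that the section is coordinate-separated as $(y_1,y_2,y_3^2)$, apply the tensor/product structure of the Scheja--Storch form on $k(q)[y_3]/(y_3^2)\cong k(q)\oplus k(q)y_3$ with Bezoutian $y_3+y_3'$, and observe that all trivialization units are absorbed because $\langle u\rangle\mb{H}=\mb{H}$. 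What the paper's proof buys is brevity and a lighter burden of bookkeeping (no need to verify that the chosen functions actually are Nisnevich coordinates or to track units); what your proof buys is self-containedness and an explicit model for the degeneration, which makes the hyperbolicity visibly forced by the square factor rather than deduced from a black-box rank theorem. One small caveat: you should reduce to $q$ being $k$-rational at the outset (as the paper does via~\cite[Theorem~1.3]{BBMMO21}) before speaking of Nisnevich coordinates $(f_1,f_2,L)$ on $\mb{A}^3_k$, since otherwise the residue-field identification needs to be handled before the trace descent rather than after; this is a presentational reordering rather than a gap.
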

\begin{proof}
By \cite[Theorem 1.3]{BBMMO21}, we may assume that $q$ is $k$-rational. Since $C(q_1)$ and $C(q_2)$ are non-degenerate with non-zero radius squared, $Q(q_1)$ and $Q(q_2)$ are reduced. As discussed in Remark~\ref{rem:doubled}, $Q(q_3)$ is a double plane. By the transversality assumption on $Q(q_i)_{\mr{red}}$, it follows that the intersection multiplicity of the $Q(q_i)$ at $q$ is 2. By \cite[Proposition 5.2]{McK21}, it follows that $\rank(\ind_q\sigma)=2$, so \cite[Theorem 2]{QSW21} implies $\ind_q\sigma=\mb{H}$.
\end{proof}

The circles of Apollonius for two circles and a point only become interesting when we treat $p_3$ as a genuine point (rather than as a circle of radius squared 0). Circles tangent to $C(p_1)$ and $C(p_2)$ and through $p_3$ correspond to the intersection locus $Q(p_1)\cap Q(p_2)\cap V(p_3)$. This is B\'ezout's theorem for the bundle $\mc{O}(2)^{\oplus 2}\oplus\mc{O}(1)\to\mb{P}^3_k$. However, \cite[Proposition 3.2]{McK21} states that this bundle is not relatively orientable. One can relatively orient the bundle $\mc{O}(2)^{\oplus 2}\oplus\mc{O}(1)$ relative to the divisor of degenerate circles $\{c_0=0\}\subset\mb{P}^3_k$ (see \cite[Section 3.2]{McK21}), but the Euler class need not be independent of our choice of section. Nevertheless, we will still discuss the local indices for the cirlce-circle-point problem in Section~\ref{sec:local contributions apollonius}.

\subsection{CPP}
Suppose we are given a circle $C(p_1)\subset\mb{P}^2_k$ and two general points $p_2,p_3\in\mb{P}^2_k$. If we consider $p_2$ and $p_3$ as circles of radius squared 0, then we can again use Proposition~\ref{prop:three circles} to check relative orientability and compute the Euler class. However, the local indices in this context are again just hyperbolic forms.

\begin{prop}
Suppose $C(q_1)$ is a non-degenerate circle with non-zero radius squared, and suppose $C(q_2)$ and $C(q_3)$ are non-degenerate circles with radius squared 0. Suppose $Q(q_i)_{\mr{red}}$ intersect transversely at a point $q$ with $k(q)/k$ a separable extension. Then $\ind_q\sigma=\Tr_{k(q)/k}2\mb{H}$.
\end{prop}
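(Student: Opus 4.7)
The plan is to mimic the proof of Proposition~\ref{prop:CCP gives H}, adapting it to the rank $4$ situation arising from two double-plane contributions rather than one.

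First, by \cite[Theorem 1.3]{BBMMO21}, we may assume that $q$ is $k$-rational. Since $C(q_1)$ is non-degenerate with nonzero radius squared, $Q(q_1)$ is reduced. By Remark~\ref{rem:doubled}, $Q(q_2)$ and $Q(q_3)$ are each double planes. The transversality assumption on $Q(q_i)_{\mr{red}}$ then implies that the intersection multiplicity of the three $Q(q_i)$ at $q$ equals $1 \cdot 2 \cdot 2 = 4$, so $\rank(\ind_q\sigma) = 4$ by \cite[Proposition 5.2]{McK21}. Here the bound rules out the easy appeal to \cite[Theorem 2]{QSW21} used in Proposition~\ref{prop:CCP gives H}, so more work is needed to identify the form itself.

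To pin down the rank $4$ class up to isomorphism, the idea is to compute the Scheja--Storch (EKL) bilinear form attached to a local presentation of $\sigma$. Transversality of the reduced cones lets us choose local coordinates $(u,v,w)$ at $q$ and local trivializations of $\mc{O}(2)^{\oplus 3}$ in which the section takes the form $\sigma = (u, v^2, w^2)$, with local algebra $A = k\bb{u,v,w}/(u,v^2,w^2) \cong k[v]/(v^2) \otimes_k k[w]/(w^2)$. The Scheja--Storch form is multiplicative under such a tensor decomposition of complete intersection presentations; each one-variable factor $k[t]/(t^2)$ with presentation $t^2$ contributes a hyperbolic plane, as can be checked by a direct Bezoutian computation (the dual basis to $\{1,t\}$ is $\{t,1\}$, giving the matrix $\bigl(\begin{smallmatrix} 0 & 1 \\ 1 & 0 \end{smallmatrix}\bigr)$). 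Therefore $\ind_q\sigma = \mb{H}\otimes\mb{H} = 2\mb{H}$, and the general statement follows by passing from $k$ to $k(q)$ and applying the trace.

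The main obstacle is careful bookkeeping of Nisnevich coordinates and local trivializations of $\mc{O}(2)^{\oplus 3}$: one must verify that these trivializations can be arranged compatibly with the tensor decomposition so that no unit-valued scalar twists intervene and corrupt the tensor-product identification. An alternative that sidesteps this bookkeeping is a deformation argument: perturbing $C(q_2)$ and $C(q_3)$ into small circles of nonzero radii squared makes both $Q(q_2)$ and $Q(q_3)$ reduced, and the length $4$ intersection scheme at $q$ splits into four nearby transverse tangent circles, indexed by the four choices of internal or external tangency to the two perturbed circles. Invariance of the local index under such deformation identifies the original index with the sum of these four rank $1$ indices, and the orientation-reversing symmetry that swaps internal for external tangency pairs the four contributions into $2\mb{H}$.
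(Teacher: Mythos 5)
Your setup (reduction to a $k$-rational zero via \cite[Theorem 1.3]{BBMMO21}, identifying $Q(q_2),Q(q_3)$ as double planes, computing rank $4$) matches the paper, but your main argument takes a genuinely different route, and the route has a gap that the paper is explicitly designed to avoid.

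The paper does not normalize the local presentation to $(u,v^2,w^2)$. Instead it changes coordinates so that $Q(q_2)=\mb{V}(\alpha c_1^2)$ and $Q(q_3)=\mb{V}((\beta c_1+\gamma c_2)^2)$ with $\alpha,\gamma\in k^\times$ and $\beta\in k$ \emph{left general}, then computes the Scheja--Storch form on the local algebra with basis $\{1,\,c_1,\,\beta c_1+\gamma c_2,\,c_1c_2\}$. After checking $c_1^2=(\beta c_1+\gamma c_2)^2=c_1c_2(\beta c_1+\gamma c_2)=0$, the Gram matrix has zero lower-right $2\times 2$ block, i.e.\ the span of $\beta c_1+\gamma c_2$ and $c_1c_2$ is a $2$-dimensional totally isotropic subspace of the rank-$4$ nondegenerate form, which forces $\Phi=2\mb{H}$. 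No tensor factorization is invoked.

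The reason the paper keeps $\alpha,\gamma$ around rather than normalizing them away is exactly the obstacle you flag but do not resolve. A change of trivialization $(\psi_1,\psi_2,\psi_3)\mapsto(u_1\psi_1,u_2\psi_2,u_3\psi_3)$ is compatible with the fixed relative orientation only when $u_1u_2u_3$ is a square (the relative orientation $\rho$ sends $\det\psi\otimes\det d\vphi$ to a square, and $\det\psi$ scales by $u_1u_2u_3$). That gives one degree of freedom, not two, so you cannot in general scale both $\alpha$ and $\gamma$ to $1$, and the clean presentation $\sigma=(u,v^2,w^2)$ is not always attainable. Your final answer happens to be insensitive to this (the $1$-variable EKL form of $at^2$ is $\mb{H}$ for every $a\in k^\times$), so the multiplicative heuristic lands on the right class, but as written the step ``choose trivializations in which $\sigma=(u,v^2,w^2)$'' is not justified and the subsequent tensor-product identity is only established under that unavailable normalization. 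Either carry the units through the tensor argument and show they cancel, or, as the paper does, sidestep the issue entirely by exhibiting the isotropic subspace.

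Your deformation alternative is also only a sketch: the claim that swapping internal for external tangency pairs the four perturbed rank-$1$ contributions into $2\mb{H}$ is essentially the content of the paper's Section~\ref{sec:other invariants}, where it remains conditional on technical hypotheses (compatibility of residue fields, a connected-components assumption on the degenerating family). It is a reasonable geometric intuition but cannot be cited as a complete proof here.
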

\begin{proof}
By \cite[Theorem 1.3]{BBMMO21}, we may assume that $q$ is $k$-rational. Since $C(q_1)$ is non-degenerate with non-zero radius squared, $Q(q_1)$ is reduced. As discussed in Remark~\ref{rem:doubled}, $Q(q_2)$ and $Q(q_3)$ are double planes. By the transversality assumption on $Q(q_i)_{\mr{red}}$, it follows that the intersection multiplicity of the $Q(q_i)$ at $q$ is 4. By \cite[Proposition 5.2]{McK21}, it follows that $\rank(\ind_q\sigma)=4$.

Since $q$ is $k$-rational, we may change coordinates such that $q=\pcoor{1:0:0:0}$, the double plane $Q(q_2)$ is defined by $\mb{V}(\alpha c_1^2)$ for some $\alpha\in k^\times$, and the double plane $Q(q_3)$ is defined by $\mb{V}((\beta c_1+\gamma c_2)^2)$ for some $\beta\in k$ and $\gamma\in k^\times$. The cone $Q(q_1)$ is defined by $\mb{V}(F)$, where $F\in k[c_0,\ldots,c_3]$ is a degree 2 homogeneous polynomial satisfying $F(1,0,0,0)=0$. Let $f:=\frac{1}{c_0^2}F$. Using \cite{KW19}, we calculate $\ind_q\sigma$ by computing the EKL form on the local algebra
\[A:=\frac{k[c_1,c_2,c_3]_{(c_1,c_2,c_3)}}{(f,\alpha c_1^2,(\beta c_1+\gamma c_2)^2)}.\]
The rank of $\ind_q\sigma$ is equal to $\dim_k A$, so any four $k$-linearly independent elements of $A$ will form a $k$-basis. Since $\alpha$ and $\gamma$ are non-zero, it follows that $\{1,c_1,\beta c_1+\gamma c_2,c_1c_2\}$ is a $k$-basis of $A$. Let $E\in A$ be the distinguished socle element \cite[(4.7) Korollar]{SS75}, and let $\phi:A\to k$ be any $k$-linear form satisfying $\phi(E)=1$. Since $c_1^2=(\beta c_1+\gamma c_2)^2=0$ and
\begin{align*}
c_1c_2(\beta c_1+\gamma c_2)&=\gamma c_1c_2^2\\
&=\gamma^{-1} c_1(-\beta^2 c_1^2-2\beta\gamma c_1c_2)\\
&=0
\end{align*}
in $A$, the bilinear form $\Phi:A\times A\to k$ given by $\Phi(a,b)=\phi(ab)$ has the following presentation with respect to the basis $\{1,c_1,\beta c_1+\gamma c_2,c_1c_2\}$.
\begin{center}
    \begin{tabular}{r | c c c c}
    & $1$ & $c_1$ & $\beta c_1+\gamma c_2$ & $c_1c_2$ \\
    \hline
    $1$ & $*$ & $*$ & $*$ & $\phi(c_1c_2)$\\
    $c_1$ & $*$ & 0 & $\gamma\cdot\phi(c_1c_2)$ & 0\\
    $\beta c_1+\gamma c_2$ & $*$ & $\gamma\cdot\phi(c_1c_2)$ & 0 & 0\\
    $c_1c_2$ & $\phi(c_1c_2)$ &0  & 0 & 0
    \end{tabular}
\end{center}
The bilinear form $\Phi$ is non-degenerate by \cite[Lemma 6]{KW19}, so $\Phi=2\mb{H}$ in $\GW(k)$.
\end{proof}

We will thus treat $p_2$ and $p_3$ as genuine points (rather than as circles of radius squared 0). Circles tangent to $C(p_1)$ and through $p_2,p_3$ correspond to the intersection locus $Q(p_1)\cap V(p_2)\cap V(p_3)$. This is B\'ezout's theorem for the bundle $\mc{O}(2)\oplus\mc{O}(1)^{\oplus 2}\to\mb{P}^3_k$. 

\begin{prop}
The bundle $\mc{O}(2)\oplus\mc{O}(1)^{\oplus 2}\to\mb{P}^3_k$ is relatively orientable with Euler class $\mb{H}$.
\end{prop}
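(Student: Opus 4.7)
The plan is to mirror the proof of Proposition~\ref{prop:three circles} exactly: the statement breaks into two pieces, each of which follows from a general result in the author's enriched B\'ezout paper once one checks a numerical parity condition on the degrees.

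First I would verify relative orientability. Let $E := \mc{O}(2)\oplus\mc{O}(1)^{\oplus 2}$ on $\mb{P}^3_k$; note that $\rank E = 3 = \dim\mb{P}^3_k$, so the question of relative orientability is meaningful. Its determinant is $\det E \cong \mc{O}(2+1+1) = \mc{O}(4)$, and since $\omega_{\mb{P}^3_k}\cong\mc{O}(-4)$, we have
\[
\det E \otimes \omega_{\mb{P}^3_k}^{\vee} \cong \mc{O}(4)\otimes\mc{O}(4) \cong \mc{O}(4)^{\otimes 2},
\]
a square in $\operatorname{Pic}(\mb{P}^3_k)$. Equivalently, the parity criterion $d_1+d_2+d_3 = 4 \equiv n+1\pmod{2}$ is satisfied for $n=3$. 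This is precisely the input needed for [McK21, Proposition 3.2], which then yields the desired relative orientability.

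Next I would compute the Euler class by invoking Theorem~\ref{thm:bezout} (equivalently [McK21, Theorem 4.4]) with $n=3$ and $(d_1,d_2,d_3)=(2,1,1)$. The parity hypothesis has already been verified above, so the theorem applies and the conclusion is
\[
e(E) \;=\; \frac{d_1 d_2 d_3}{2}\,\mb{H} \;=\; \frac{2\cdot 1\cdot 1}{2}\,\mb{H} \;=\; \mb{H}.
\]

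I do not anticipate any genuine obstacle here: both assertions reduce immediately to general results already established in [McK21], and the only substantive step is the numerical compatibility of the degrees $(2,1,1)$ with rank $3$ on $\mb{P}^3_k$. The real work for the CPP configuration will come afterward, in computing the local indices $\ind_q\sigma$ in Section~\ref{sec:local contributions apollonius}, where one must contend with the non-reduced structure of the double planes $V(p_2)$ and $V(p_3)$ arising from treating points as circles of radius squared $0$.
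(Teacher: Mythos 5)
Your proof is correct and takes essentially the same route as the paper, which simply cites \cite[Proposition 3.2 and Theorem 4.4]{McK21}; you additionally spell out the parity check and the Euler class arithmetic. One minor convention note: relative orientability asks that $\det E \otimes \omega_{\mb{P}^3_k}$ (not $\det E \otimes \omega_{\mb{P}^3_k}^{\vee}$) be a square, which here gives $\mc{O}(0)$ rather than $\mc{O}(8)$, but since both are squares in $\operatorname{Pic}(\mb{P}^3_k)$ the parity criterion $d_1+d_2+d_3\equiv n+1 \pmod 2$ and your conclusion are unaffected.
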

\begin{proof}
The relative orientability and Euler class computation can be found in \cite[Proposition 3.2 and Theorem 4.4]{McK21}.
\end{proof}

\subsection{PPP}
Finally, suppose we are given three general points $p_1,p_2,p_3\in\mb{P}^2_k$. If we consider these points as circles of radius squared 0, then Proposition 4.1 again gives us relative orientability and computes the relevant Euler class. However, the intersection of three general double planes in $\mb{P}^3_k$ will consist of a single point, so the local index will be equal to the Euler class:
\[\ind_q\sigma=e(\mc{O}(2)^{\oplus 3})=4\mb{H}.\]
As in the previous cases involving points instead of circles, we will treat $p_1,p_2,p_3$ as genuine points. The unique circle through $p_1,p_2,p_3$ corresponds to the intersection $V(p_1)\cap V(p_2)\cap V(p_3)$. This is B\'ezout's theorem for the bundle $\mc{O}(1)^{\oplus 3}\to\mb{P}^3_k$. As for the circle-circle-point problem, this bundle is not relatively orientable by \cite[Proposition 3.2]{McK21}. One can relatively orient $\mc{O}(1)^{\oplus 3}$ relative to the divisor of degenerate circles $\{c_0=0\}\subset\mb{P}^3_k$, but the Euler class is equal to the local index and will depend on the choice of section.

\section{Local contributions for general enumerative problems}\label{sec:local contributions}
Results in enumerative geometry often consist of equations relating a fixed count of objects to a sum of \textit{local contributions} that depend on the individual objects being counted:
\begin{align}\label{eq:enumerative}
\text{fixed count}=\sum_\text{objects}\text{local contribution}.
\end{align}
For example, in the classical version of the circles of Apollonius, the fixed count is 8, and each tangent circle gives a local contribution of 1. In $\mb{A}^1$-enumerative geometry, both the fixed count and local contributions are $\GW(k)$-valued rather than integer-valued. In many cases, fixed counts can be computed using a motivic version of the Euler class \cite{Lev20,KW21,BW20}. Local contributions are computed as a local index, which admits a convenient formula in terms of commutative algebra \cite{KW19,BBMMO21,BMP21}.

In order for Equation~\ref{eq:enumerative} to be an enumerative \textit{geometric} equation, we need to give geometric descriptions of the local contributions. Giving a meaningful geometric interpretation of the local index, which is a priori an algebraic expression, poses one of the main difficulties in $\mb{A}^1$-enumerative geometry.

\begin{ques}[Geometricity]\label{ques:geometric interpretation}
Are local indices always geometric? Can enumerative problems be classified by the ``geometric taxon'' of their local indices?
\end{ques}

In a sense to be described in Section~\ref{sec:bezout as universal}, B\'ezout's theorem gives a universal geometric interpretation of local contributions. However, this perspective fails to give a satisfactory answer to Question~\ref{ques:geometric interpretation} --- B\'ezout's theorem gives a geometric interpretation is in terms of the moduli space of the geometric objects in question, rather than in terms of the intrinsic geometry of the objects themselves. We will clarify this concern in Section~\ref{sec:local contributions apollonius} with the circles of Apollonius as a case study. While the classical statement of the circles of Apollonius can be viewed as a corollary of B\'ezout's theorem, the geometric interpretation given in Lemma~\ref{lem:geometric interpretation} shows that the $\mb{A}^1$-enumerative situation is more subtle. 

\begin{rem}
In light of the previous paragraph, we refine Question~\ref{ques:geometric interpretation} to ask whether local indices are ``intrinsically'' geometric, as demonstrated in the following example.
\end{rem}

\begin{ex}\label{ex:taxa}
The second part of Question~\ref{ques:geometric interpretation} asks for a taxonomy of enumerative problems in terms of the geometric interpretations of their local indices. We propose three potential taxa to give an indication of what this might look like. See Appendix~\ref{sec:phylogeny} for further discussion on how such taxa might fit together into a phylogenetic tree of enriched enumerative problems.
\begin{enumerate}[(i)]
\item \textit{Segre involutions} play a prominent role in Kass--Wickelgren's enriched count of lines on cubic surfaces~\cite{KW21} and Pauli's count of lines on quintic threefolds~\cite{Pau20}. The Segre involution associated to a line $L$ on a cubic surface $X$ swaps points $p,q\in L$ such that $T_pX=T_qX$. Kass and Wickelgren show that the local index for lines on cubic surfaces is given by the degree of the Segre involution. The description of Segre involutions associated to lines on quintic threefolds is a little more complicated, but it again relates to swapping points along a line whose tangent spaces coincide. Pauli shows that the local index for lines on quintic threefolds is given by the degree of a product of three Segre involutions. In general, one might hope that the local index for counting lines on a degree $2n-3$ hypersurface in $\mb{P}^n$ can be described in terms of Segre involutions.

\item There are 2 lines meeting 4 lines in $\mb{P}^3$, and in general a finite number of lines meeting $2n-2$ hyperplanes of dimension $n-2$ in $\mb{P}^n$. Srinivasan and Wickelgren give an $\mb{A}^1$-enumerative account of this story when $n$ is odd~\cite{SW21}. For $n=3$, the local index is a difference of cross-ratios associated to the geometry of the solution lines $L,L'$, the given lines $L_1,L_2,L_3,L_4$, their various intersections, and the various planes spanned by pairs of intersecting lines. For larger $n$, Srinivasan and Wickelgren geometrically interpret the local index by a determinantal formula that again depends on the various intersection points and hyperplanes spanned by pairs of intersecting hyperplanes. This (more complicated) interpretation recovers the difference of cross-ratios when $n=3$, so this family of enumerative problems share a common geometric local index.

\item In~\cite{DGGM21}, the authors give an enriched count of conics meeting 8 lines in $\mb{P}^3$. Given a conic $C$ meeting lines $L_1,\ldots,L_8$, the local index is geometrically described in terms of the intersection points $C\cap L_i$ and the slopes of each $L_i$ relative to to the tangent lines $T_{C\cap L_i}C$. The local index comes from an explicit section of the bundle $\mc{O}(1)^{\oplus 8}\to\mb{P}\mr{Sym}^2(\mc{S}^\vee)$, where $\mc{S}\to\mb{G}(2,3)$ is the tautological subbundle on the Grassmannian of 2-planes in $\mb{P}^3$.

In general, there are a finite number of degree $n$ plane curves meeting $f(n):=\binom{n+2}{n}+2$ lines in $\mb{P}^3$. The bundle $\mc{O}(1)^{\oplus f(n)}\to\mb{P}\mr{Sym}^n(\mc{S}^\vee)$ is relatively orientable if and only if $\binom{n+2}{n}$ is even (see~\cite[Lemma 3.1]{DGGM21}), which happens precisely when $n$ is equivalent to 2 or 3 mod 4. In any case, we again get an explicit section whose associated local index can be described geometrically in terms of the intersection points $C\cap L_i$ and the slopes of $L_i$ relative to $T_{C\cap L_i}C$ (where $C$ is now a plane curve of degree $n$).

From this perspective, the enumerative problems of counting plane curves of a given degree meeting lines in $\mb{P}^3$ belong to the same geometric taxon. An interesting question is whether the problems of counting $\mb{P}^m$-curves of a given degree meeting lines in $\mb{P}^n$ (with $m\leq n$) also belong to this geometric taxon.
\end{enumerate}
\end{ex}

\subsection{Intersection volume as a universal local contribution}\label{sec:bezout as universal}
In classical enumerative geometry, many theorems only become truly \textit{enumerative} when objects are counted with multiplicity. B\'ezout's theorem is the prototypical example of this phenomenon: unless intersections are counted with multiplicity, the product of degrees merely gives an upper bound to the number of intersections. In this way, intersection multiplicity is a universal local contribution for many classical enumerative problems. Similarly, B\'ezout's theorem gives a universal geometric interpretation of local contributions in $\mb{A}^1$-enumerative geometry. We will first describe this geometric interpretation under a transversality assumption. We will then use Pauli's enrichment of the dynamic degree \cite{Pau20,PW20} to reduce to the transverse case.

\begin{defn}
Let $f_1,\ldots,f_n\in k[x_1,\ldots,x_n]$ with corresponding hypersurfaces $X_i:=\mb{V}(f_i)\subseteq\mb{A}^n_k$. Assume that $p\in\bigcap_i X_i$ is an isolated intersection point with intersection multiplicity $i_p(X_1,\ldots,X_n)=[k(p):k]$ (that is, $X_i$ intersect transversely at $p$ by \cite[Proposition 5.4]{McK21}). The \textit{intersection volume} $\Vol(p)\in k(p)$ of $f_1,\ldots,f_n$ at $p$ is the volume of the parallelepiped spanned by the gradient vectors $\nabla f_i(p)$. In other terms,
\begin{align*}
\Vol(p)&=\det(\nabla f_1(p)\mid \ldots\mid \nabla f_n(p))\\
&=\Jac(f_1,\ldots,f_n)(p).
\end{align*}
\end{defn}

In order to compute the intersection volume for a section $\sigma:X\to V$ of a relatively oriented vector bundle, we need Nisnevich coordinates and compatible local trivializations to express $\sigma$ as an endomorphism of affine space. This intersection volume will not depend on our choices of such data \cite[Corollary 31]{KW21}, but we need to show that such data are guaranteed to exist. The existence of Nisnevich coordinates is given by \cite[Lemma 19]{KW21}. We now show that compatible local trivializations also exist.

\begin{prop}\label{prop:trivs exist}
Let $X$ be a $k$-scheme of dimension $n$. Let $V\to X$ be a relatively orientable vector bundle of rank $n$, and let $\rho:\det{V}\otimes\omega_X\xrightarrow{\cong}L^{\otimes 2}$ be a relative orientation of $V\to X$. Given Nisnevich coordinates $\vphi:U\to\mb{A}^n_k$ on an open subscheme $U\subseteq X$, there exists a local trivialization $\psi:V|_U\to\mb{A}^n_k\times U\to\mb{A}^n_k\to\mb{A}^n_k$ that is compatible with $(\vphi,U)$ and $(\rho,L)$.
\end{prop}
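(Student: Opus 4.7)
The plan is to start with any trivialization of $V|_U$ and then rescale one basis vector so that the relative orientation $\rho$ sends the canonical section to a tensor square in $L^{\otimes 2}|_U$. The key observation is that rescaling a single basis vector of $V|_U$ by a unit $\alpha\in\mc{O}(U)^\times$ multiplies the associated generator of $\det V|_U$ by $\alpha$, so once we compute the initial defect $\alpha$ we can absorb it by taking $\alpha$ itself as our scaling factor, producing the required square $\alpha^2\ell^{\otimes 2}=(\alpha\ell)^{\otimes 2}$.

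First I would recall the compatibility condition from \cite{KW21}: a trivialization $\psi\colon V|_U\xrightarrow{\cong}\mc{O}_U^n$ is compatible with $(\vphi,U)$ and $(\rho,L)$ exactly when the image $\rho(s_\psi\otimes\omega_\vphi)\in L^{\otimes 2}(U)$ is a tensor square, where $s_\psi\in(\det V)(U)$ is the generator of $\det V|_U$ induced by $\psi$ and $\omega_\vphi:=\vphi^*(dx_1\wedge\cdots\wedge dx_n)\in\omega_X(U)$ is the trivialization of $\omega_X|_U$ coming from the Nisnevich coordinates (well-defined since $\vphi$ is \'etale). Next I would shrink $U$ to a Nisnevich neighborhood $U'\subseteq U$ of the distinguished point $p$ on which both $V$ and $L$ become Zariski-trivial; this is permitted because vector bundles are Zariski-locally trivial, $\vphi|_{U'}$ remains Nisnevich at $p$, and the notion of a local trivialization is inherently local at $p$.

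Then I would fix any initial trivialization $\psi_0$ of $V|_{U'}$ with basis $e_1,\ldots,e_n$ and any generator $\ell$ of $L|_{U'}$, and write $\rho(e_1\wedge\cdots\wedge e_n\otimes\omega_\vphi)=\alpha\cdot\ell^{\otimes 2}$ for the unique $\alpha\in\mc{O}(U')^\times$. Finally I declare $\psi$ to be the trivialization whose basis is $(\alpha e_1,e_2,\ldots,e_n)$; the $\mc{O}(U')$-linearity of $\rho$ yields $\rho(s_\psi\otimes\omega_\vphi)=\alpha^2\ell^{\otimes 2}=(\alpha\ell)^{\otimes 2}$, which is the tensor square of the generator $\alpha\ell$ of $L|_{U'}$, so $\psi$ is compatible with $(\vphi,U')$ and $(\rho,L)$.

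The real content of the argument lies entirely in this rescaling step, which exploits the fact that $L$ enters a relative orientation only through its tensor square. There is no serious obstacle: the only non-bookkeeping step is noting that any unit $\alpha\in\mc{O}(U')^\times$ can be absorbed by re-choosing the generator $\alpha\ell$ of $L|_{U'}$, whose tensor square matches the rescaled $\rho$-image; everything else is shrinking $U$ and unwinding definitions.
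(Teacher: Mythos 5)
Your proposal is correct and is essentially the same argument as the paper's: start with an arbitrary trivialization, observe the defect $\alpha$, and absorb it by rescaling a single basis vector (which scales $\det V|_U$ by $\alpha$, so $\rho$-image picks up a factor of $\alpha^2=$ a square). The one small refinement in your write-up — explicitly shrinking $U$ so that $V$ and $L$ are both Zariski-trivial before comparing generators — is a point the paper elides (it asserts $L|_U\cong\mc{O}_X(U)$ without comment), so your version is marginally more careful, but the core mechanism is identical.
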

\begin{proof}
The Nisnevich coordinates $\vphi=(\vphi_1,\ldots,\vphi_n)$ on $U$ determine a local trivialization $d\vphi:=(d\vphi_1,\ldots,d\vphi_n)$ on the cotangent bundle $T^*X|_U$, which in turn determines the distinguished basis element $\det{d\vphi}:=d\vphi_1\wedge\cdots\wedge d\vphi_n$ of $\omega_X|_U:=\det T^*X|_U$ (considered as a rank one $\mc{O}_X(U)$-module). Let $\psi=(\psi_1,\ldots,\psi_n):V|_U\to\mb{A}^n_k\times U\to\mb{A}^n_k$ be a local trivialization. This determines the distinguished basis element $\det\psi:=\psi_1\wedge\cdots\wedge\psi_n$ of $\det{V}|_U$ (considered as a rank one $\mc{O}_X(U)$-module).

If $\rho(\det\psi\otimes\det d\vphi)=\ell\otimes\ell$ for some $\ell\in L|_U$, then we are done. Otherwise, note that $L|_U^{\otimes 2}\cong L|_U\cong\mc{O}_X(U)$ and hence $\rho(\det\psi\otimes\det d\vphi)\in L|_U^{\otimes 2}\cong\mc{O}_X(U)$. Let $f\in\mc{O}_X(U)$ be the image of $\rho(\det\psi\otimes\det d\phi)$, and let $\psi'=(f\psi_1,\psi_2,\ldots,\psi_n)$. Now the image in $\mc{O}_X(U)$ of $\rho(\det\psi'\otimes\det d\phi)$ is $f^2$. Letting $\ell\in L|_U$ be the preimage of $f$ under $L|_U\cong\mc{O}_X(U)$, we have $\rho(\det\psi'\otimes\det d\vphi)=\ell\otimes\ell$.
\end{proof}

We can now show that the local index of a section $\sigma:X\to V$ at a simple zero is always given by an intersection volume.

\begin{prop}\label{prop:transverse}
Let $X$ be a $k$-scheme of dimension $n$. Let $V\to X$ be a relatively orientable vector bundle of rank $n$, and let $\rho:\det{V}\otimes\omega_X\xrightarrow{\cong}L^{\otimes 2}$ be a relative orientation of $V\to X$. Let $\sigma:X\to V$ be a section. If $p\in\sigma^{-1}(0)$ is a simple zero with separable residue field $k(p)/k$, then the local index $\ind_p\sigma$ is equal to the intersection volume $\Tr_{k(p)/k}\langle\Vol(p)\rangle$.
\end{prop}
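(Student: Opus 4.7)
The plan is to reduce the local index $\ind_p\sigma$ to a local $\mb{A}^1$-degree of a map of affine spaces at a simple zero, and then invoke the Kass--Wickelgren formula for such a local degree.

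First, I would invoke~\cite[Lemma 19]{KW21} to choose Nisnevich coordinates $\vphi : U \to \mb{A}^n_k$ on an open neighborhood $U$ of $p$, and then apply Proposition~\ref{prop:trivs exist} to obtain a local trivialization $\psi : V|_U \to \mb{A}^n_k$ compatible with $\vphi$ and the relative orientation $\rho$. Transporting $\sigma$ through these choices yields a tuple of regular functions $(f_1,\ldots,f_n) := \psi \circ \sigma \circ \vphi^{-1}$ on a Nisnevich neighborhood of $\tilde p := \vphi(p)$ in $\mb{A}^n_k$, all vanishing at $\tilde p$. By~\cite[Definition 30]{KW21}, the local index $\ind_p\sigma$ is by definition the local $\mb{A}^1$-degree $\deg_{\tilde p}(f_1,\ldots,f_n) \in \GW(k)$.

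Since $p$ is a simple zero of $\sigma$, the hypersurfaces $\mb{V}(f_i)$ meet transversely at $\tilde p$, which is equivalent to the non-vanishing of $\Jac(f_1,\ldots,f_n)(\tilde p)$ in $k(\tilde p)$. Since $\vphi$ is étale at $p$, one has $k(\tilde p) \cong k(p)$, so this extension is still separable over $k$. We are then exactly in the setting where the Kass--Wickelgren formula for simple zeros~\cite[Lemma 9]{KW19} (see also~\cite[Theorem 1.3]{BBMMO21}) applies, yielding
\[
\deg_{\tilde p}(f_1,\ldots,f_n) = \Tr_{k(\tilde p)/k}\langle \Jac(f_1,\ldots,f_n)(\tilde p)\rangle = \Tr_{k(p)/k}\langle \Vol(p)\rangle,
\]
since the Jacobian at $\tilde p$ is, by definition, the intersection volume $\Vol(p)$.

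The main subtlety I anticipate is ensuring that the resulting class $\langle\Vol(p)\rangle \in \GW(k(p))$ is independent of the choices of $\vphi$ and $\psi$. Any change of compatible trivialization or Nisnevich coordinates rescales $\Jac(f_1,\ldots,f_n)(\tilde p)$ by an element of $k(p)^\times$, and one must verify that the compatibility condition with $\rho$ forces that rescaling to lie in $(k(p)^\times)^2$, so that the square class $\langle\Vol(p)\rangle$ is unchanged. This is precisely the phenomenon exploited in the proof of Proposition~\ref{prop:trivs exist}, and the invariance of the local index under compatible choices is formally recorded in~\cite[Corollary 31]{KW21}. Once this independence is in hand, the displayed chain of equalities completes the proof.
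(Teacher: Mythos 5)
Your proposal is correct and follows essentially the same route as the paper: Nisnevich coordinates from \cite[Lemma 19]{KW21}, compatible trivializations from Proposition~\ref{prop:trivs exist}, well-definedness from \cite[Corollary 31]{KW21}, and the Jacobian-determinant formula for a simple zero combined with a trace over $k(p)/k$. The only cosmetic difference is in the precise citations for the last step: the paper factors the argument into the base-change formula $\ind_p\sigma=\Tr_{k(p)/k}\ind_{\tilde{p}}\sigma_{k(p)}$ from \cite[Proposition 34]{KW21} followed by the rational simple-zero computation in \cite[Proposition 15]{KW19}, whereas you cite a combined statement; the content is the same.
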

\begin{proof}
This is essentially proved in \cite[Lemma 5.5]{McK21}, but we repeat the relevant details here. Let $U\subset X$ be an open neighborhood of $p$ with Nisnevich coordinates $\vphi:U\to\mb{A}^n_k$, which exist by \cite[Lemma 19]{KW21}. Let $\psi:V|_U\to\mb{A}^n_k\times U\to\mb{A}^n_k$ be a local trivialization of $V$ compatible with the Nisnevich coordinates $\vphi$ and the relative orientation $(\rho,L)$, which exists by Proposition~\ref{prop:trivs exist}. Then there exist $f_1,\ldots,f_n\in k[x_1,\ldots,x_n]$ such that
\[(f_1,\ldots,f_n)=\psi\circ\sigma\circ\vphi^{-1}:\mb{A}^n_k\to\mb{A}^n_k.\]
By \cite[Corollary 31]{KW21}, the local index $\ind_p\sigma$ is well-defined and independent of our choice of Nisnevich coordinates, compatible trivializations, and functions $f_1,\ldots,f_n$. By \cite[Proposition 34]{KW21} (see also \cite{BBMMO21}), we have $\ind_p\sigma=\Tr_{k(p)/k}\ind_{\tilde{p}}\sigma_{k(p)}$, where $\tilde{p}$ is the $k(p)$-rational lift of $p$ determined by the extension $k\to k(p)$, and $\sigma_{k(p)}$ is the base change of $\sigma$. Since $p$ is a simple zero of $\sigma$, we have $\ind_{\tilde{p}}\sigma_{k(p)}=\langle\Jac(f_1,\ldots,f_n)(p)\rangle$ by \cite[Proposition 15]{KW19}, which is equal to $\langle\Vol(p)\rangle$ by \cite[Section 5.1]{McK21}.
\end{proof}

\begin{rem}
An instance of Proposition~\ref{prop:transverse} can be seen in Brazelton's enriched count of $m$-planes meeting $(n-m)$-planes in $\mb{P}^n$~\cite{Bra22}. One geometric interpretation given in the article is phrased as a signed or oriented volume in terms of certain vectors represented in Pl\"ucker coordinates. These Pl\"ucker coordinates are local coordinates on the Grassmannian $X$ parameterizing the planes under consideration, and the oriented volume is the intersection volume of hypersurfaces that are locally determined by a relevant section of a vector bundle on $X$.
\end{rem}

\subsection{Dynamic local $\mb{A}^1$-degree}\label{sec:dynamic degree}
Since transversality is a generic condition, we would like to reduce arbitrary intersections to the case of Proposition~\ref{prop:transverse}. Using dynamic intersections, one can relate a special intersection to a generic one \cite[Section 11]{Ful98}. The classical dynamic intersection was enriched by Pauli to give a dynamic $\mb{A}^1$-Euler number \cite{Pau20}, as well as by Pauli--Wickelgren to give a dynamic $\mb{A}^1$-Milnor number \cite{PW20}. In fact, Pauli--Wickelgren's approach can be repeated almost verbatim to give a dynamic local $\mb{A}^1$-degree for any $f:\mb{A}^n_k\to\mb{A}^n_k$. We recall the details here, with \cite[Section 6.3]{PW20} as the standing reference for Section~\ref{sec:dynamic degree}. This will culminate in Theorem~\ref{thm:dynamic degree}, which is essentially a rephrasing of \cite[Theorem 5]{PW20}. We begin with the classical computation of $\GW(k\bb{t})$:

\begin{prop}\label{prop:GW(k[[t]])}
The map $\langle r(t)\rangle\mapsto \langle r(0)\rangle$ defines an isomorphism $\mr{ev}_0:\GW(k\bb{t})\xrightarrow{\cong}\GW(k)$ with inverse induced by the inclusion map $k\hookrightarrow k\bb{t}$.
\end{prop}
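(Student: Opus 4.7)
The plan is to verify the two compositions $\mr{ev}_0\circ\iota$ and $\iota\circ\mr{ev}_0$ are each the identity, where $\iota:\GW(k)\to\GW(k\bb{t})$ is induced by the inclusion $k\hookrightarrow k\bb{t}$. Along the way, I must check that $\mr{ev}_0$ is well-defined on the level of $\GW$.

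First I would observe that $k\bb{t}$ is a local ring with maximal ideal $(t)$ in which 2 is a unit (since $\Char k\neq 2$). In particular every symmetric non-degenerate bilinear form over $k\bb{t}$ is diagonalizable, so $\GW(k\bb{t})$ is generated by the rank-one classes $\langle r(t)\rangle$ with $r(t)\in k\bb{t}^\times$. Any such unit has $r(0)\in k^\times$, so the assignment $\langle r(t)\rangle\mapsto\langle r(0)\rangle$ lands in $\GW(k)$; the Grothendieck--Witt relations (squares act trivially, orthogonality, and hyperbolic relations) are all preserved by evaluation at $t=0$, so $\mr{ev}_0$ is a well-defined group homomorphism.

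Next I would handle one composition for free: $\mr{ev}_0\circ\iota$ sends $\langle a\rangle$ to $\langle a\rangle$ for $a\in k^\times$, so this composition is the identity on $\GW(k)$. The main content is the other composition. I would check on generators that $\iota(\mr{ev}_0(\langle r(t)\rangle))=\langle r(0)\rangle=\langle r(t)\rangle$ in $\GW(k\bb{t})$, which reduces to showing that $r(t)/r(0)$ is a square in $k\bb{t}^\times$. Writing $r(t)/r(0)=1+t\,h(t)$ with $h(t)\in k\bb{t}$, I would invoke the formal binomial series
\[
\sqrt{1+x}=\sum_{n\geq 0}\binom{1/2}{n}x^n,
\]
whose coefficients lie in $k$ because $\Char k\neq 2$. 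Substituting $x=t\,h(t)$ produces a well-defined element $u(t)\in k\bb{t}$ with $u(t)^2=1+t\,h(t)$, so $r(t)=r(0)\cdot u(t)^2$ and hence $\langle r(t)\rangle=\langle r(0)\rangle$ in $\GW(k\bb{t})$.

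The only genuine obstacle is this square-root step, and it is handled cleanly by the binomial series once $\Char k\neq 2$ is in hand. (Equivalently, one can invoke Hensel's lemma for the polynomial $X^2-(1+th(t))$ over the Henselian ring $k\bb{t}$.) With both compositions shown to be the identity on generators, and both maps being group homomorphisms, the proposition follows.
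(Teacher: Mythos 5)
Your proof is correct and follows essentially the same route as the paper's: both reduce to the fact that $\GW(k\bb{t})$ is generated by rank-one classes $\langle r(t)\rangle$ and that $r(t)/r(0)=1+th(t)$ is a square in $k\bb{t}$ when $\Char k\neq 2$. The paper cites a reference for generation by rank-one classes and is terser about constructing the square root, while you supply the diagonalizability argument and the explicit binomial (or Hensel) construction, but the mathematical content is the same.
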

\begin{proof}
This follows from the fact that $\GW(k\bb{t})$ is generated by elements of the form $\langle r(t)\rangle$ for units $r(t)\in k\bb{t}^\times$ (see e.g. \cite[Lemma B.3]{BW20}). Any such unit satisfies $r(0)\neq 0$, so we may write $r(t)=\sum_{i=0}^\infty a_it^i=a_0(1+\sum_{i=1}^\infty\frac{a_i}{a_0}t^i)$. Since $\Char{k}\neq 2$, there exists a square root $s(t)\in k\bb{t}$ of $1+\sum_{i=1}^\infty\frac{a_i}{a_0}t^i$. In particular, in $\GW(k\bb{t})$, we have
\begin{align*}
\langle r(t)\rangle&=\langle a_0s(t)^2\rangle\\
&=\langle a_0\rangle=\langle r(0)\rangle.\qedhere
\end{align*}
\end{proof}

We now summarize the relationships between $\GW(k)$, $\GW(k\bb{t})$, and $\GW(k\pp{t})$. Any element of $k\pp{t}$ is either of the form $u$ or $ut$, where $u$ is a unit under the $(t)$-adic valuation. The second residue homomorphism $\partial_t:\GW(k\pp{t})\to\W(k)$ is defined by $\partial_t\langle ut\rangle=\langle\bar{u}\rangle$ and $\partial_t\langle u\rangle=0$, where $\bar{u}\in k$ is the residue of $u$ in $k\pp{t}/(t)$. More generally, the second residue homomorphism is defined on Milnor--Witt $K$-groups
\[\partial_t:\Kmw_n(k\pp{t})\to \Kmw_{n-1}(k),\]
with $\Kmw_n(k\bb{t}):=\ker\partial_t$ \cite[p. 58]{Mor12}. Setting $n=0$ recovers the short exact sequence
\[0\to\GW(k\bb{t})\to\GW(k\pp{t})\xrightarrow{\partial_t}\W(k)\to 0\]
of abelian groups. Denote the inclusion of $\ker\partial_t$ by $\imath:\GW(k\bb{t})\to\GW(k\pp{t})$. Let $\jmath:\GW(k)\hookrightarrow\GW(k\pp{t})$ be inclusion into the first factor under Springer's theorem \cite[Chapter VI, Theorem 1.4]{Lam05}. Together with $\mr{ev}_0:\GW(k\bb{t})\to\GW(k)$, these maps form a commutative triangle.

\begin{prop}\label{prop:triangle of gw}
The following diagram commutes.
\[\begin{tikzcd}
\GW(k\bb{t})\arrow[r,hook,"\imath"]\arrow[d,"\mr{ev}_0"',"\cong"] & \GW(k\pp{t})\\
\GW(k)\arrow[ur,hook,"\jmath"']&
\end{tikzcd}\]
\end{prop}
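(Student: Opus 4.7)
The plan is to verify the triangle commutes on generators and then invoke bilinearity. Since $\GW(k\bb{t})$ is generated (as an abelian group/ring) by classes $\langle r(t)\rangle$ with $r(t)\in k\bb{t}^\times$, it suffices to check that $\jmath(\mr{ev}_0\langle r(t)\rangle)=\imath\langle r(t)\rangle$ in $\GW(k\pp{t})$ for every such $r(t)$.

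First I would unpack both sides. By definition, $\jmath(\mr{ev}_0\langle r(t)\rangle)=\jmath\langle r(0)\rangle$, which, via Springer's theorem, is the class $\langle r(0)\rangle\in\GW(k\pp{t})$ obtained by viewing the unit $r(0)\in k^\times$ inside $k\pp{t}^\times$. On the other hand, $\imath\langle r(t)\rangle$ is simply the class $\langle r(t)\rangle\in\GW(k\pp{t})$ obtained by regarding $r(t)\in k\bb{t}^\times$ as an element of $k\pp{t}^\times$. So the statement reduces to the identity $\langle r(t)\rangle=\langle r(0)\rangle$ in $\GW(k\pp{t})$.

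This last identity is exactly the content (over $k\pp{t}$ rather than $k\bb{t}$) of the square-root argument used in the proof of Proposition~\ref{prop:GW(k[[t]])}. Since $r(0)\in k^\times$, we can write $r(t)=r(0)(1+\sum_{i\ge 1}\tfrac{a_i}{a_0}t^i)$, and because $\Char k\neq 2$ the power series $1+\sum_{i\ge 1}\tfrac{a_i}{a_0}t^i$ admits a square root $s(t)\in k\bb{t}^\times\subset k\pp{t}^\times$ (constructed by the binomial series). Thus $r(t)=r(0)s(t)^2$ as units in $k\pp{t}$, which gives $\langle r(t)\rangle=\langle r(0)\rangle$ in $\GW(k\pp{t})$, as desired.

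There is essentially no obstacle here: the only substantive input is the existence of the square root $s(t)$, and that has already been used in Proposition~\ref{prop:GW(k[[t]])}. Extending the identity from $\GW(k\bb{t})$ to $\GW(k\pp{t})$ requires only the observation that $s(t)$ remains a unit (and thus a square root) after enlarging the coefficient ring to $k\pp{t}$, which is immediate. The commutativity of the triangle is therefore just a restatement of the square-root trick in the Laurent series setting.
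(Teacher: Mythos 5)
Your proof is correct and takes essentially the same route as the paper: the paper observes that $\imath\circ\mr{ev}_0^{-1}$ is the map on $\GW$ induced by the field inclusion $k\hookrightarrow k\pp{t}$ (citing Lam, p.~146) and then identifies it with $\jmath$, whereas you unwind the same fact on generators $\langle r(t)\rangle$ and re-run the square-root trick inside $\GW(k\pp{t})$; the underlying content — that $s(t)\in k\bb{t}^\times$ with $s(t)^2=r(t)/r(0)$ exists because $\Char k\ne 2$, and remains a unit in $k\pp{t}$ — is identical.
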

\begin{proof}
The composite $\imath\circ\mr{ev}_0^{-1}:\GW(k)\hookrightarrow\GW(k\pp{t})$ is the injection induced by the inclusion $k\hookrightarrow k\pp{t}$ defined by $a\mapsto a$ \cite[p. 146]{Lam05}, which is precisely the inclusion $\jmath$.
\end{proof}

Given a map $\mb{A}^n_k\to\mb{A}^n_k$, we will build a deformation (that is, a map over $k\bb{t}$) whose local degree at a special fiber is our local degree valued in $\GW(k)$. The general fiber of this map will have local degree valued in $\GW(k\pp{t})$. Proposition~\ref{prop:triangle of gw} will enable us to relate these two local degrees and exploit the genericity of transverse intersections.

Let $f_1,\ldots,f_n\in k[x_1,\ldots,x_n]$. Given $g_1,\ldots,g_n\in k\bb{t}[x_1,\ldots,x_n]$, let
\[X:=\mb{V}(f_1+tg_1,\ldots,f_n+tg_n)\subseteq\mb{A}^n_{k\bb{t}}.\]

\begin{notn}
Given a scheme $Y\to\Spec{k\bb{t}}$, denote its special fiber by $Y_0:=\Spec{k}\times_{\Spec{k\bb{t}}}Y$ and its generic fiber by $Y_t:=\Spec{k\pp{t}}\times_{\Spec{k\bb{t}}}Y$.
\end{notn}

Note that $\mb{V}(f_1,\ldots,f_n)=X_0$. Since $k\bb{t}$ is a local ring, \cite[\href{https://stacks.math.columbia.edu/tag/04GG}{Lemma 04GG (12)}]{stacks} implies that $X=X^{\mr{fin}}\amalg X^{\geq 1}$, where $X^{\mr{fin}}\to\Spec{k\bb{t}}$ is finite and $(X^{\geq 1})_0$ is a union of irreducible $k$-schemes of dimension at least 1.

\begin{notn}
Given a closed point $p\in X_0$, let $X^p$ be the union of all irreducible components of $X$ containing $p$.
\end{notn}

The scheme $X^p$ is a finite collection of points, namely $p$ and the points that $p$ splits into in the generic fiber $X_t$. To see this, we first show that $X^p\to\Spec{k\bb{t}}$ is finite.

\begin{prop}\label{prop:X^p finite}
If $p\in X_0$ is isolated, then $X^p\to\Spec{k\bb{t}}$ is finite.
\end{prop}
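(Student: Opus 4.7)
The plan is to show $X^p \subseteq X^{\mathrm{fin}}$ and then conclude finiteness over $\Spec k\bb{t}$ by composing a closed immersion with a finite morphism. Since $X^{\mathrm{fin}}$ and $X^{\geq 1}$ are open-and-closed in $X$, every irreducible component of $X$ is contained entirely in one piece or the other. It therefore suffices to prove that no irreducible component $Y$ of $X$ containing $p$ can lie in $X^{\geq 1}$. Suppose for contradiction that some such $Y$ did sit in $X^{\geq 1}$. Then $p$, being a closed point of $X_0$, would lie in $Y_0 \subseteq (X^{\geq 1})_0$. By the Stacks Project decomposition, $(X^{\geq 1})_0$ is a union of irreducible $k$-schemes of dimension at least one, so some such subscheme passes through $p$. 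This forces $\dim_p X_0 \geq \dim_p (X^{\geq 1})_0 \geq 1$, contradicting the isolatedness of $p$ in $X_0$.

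Once $X^p \subseteq X^{\mathrm{fin}}$ is established, the inclusion $X^p \hookrightarrow X^{\mathrm{fin}}$ is a closed immersion (since $X^p$ is closed in $X$ and $X^{\mathrm{fin}}$ is clopen in $X$, so $X^p$ is closed in $X^{\mathrm{fin}}$). Composing this closed immersion with the finite structure map $X^{\mathrm{fin}} \to \Spec k\bb{t}$ supplied by the Stacks Project lemma yields the desired finiteness of $X^p \to \Spec k\bb{t}$.

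I do not expect a real obstacle here; the content is simply the dimension-theoretic observation that an isolated point of the special fiber cannot lie on a component of $X$ whose special fiber is positive-dimensional at $p$. The only small bookkeeping point is to confirm that $p \in Y$ implies $p \in Y_0$, which follows because $p$ is a closed point of $X_0$ and $Y \cap X_0 = Y_0$ as subschemes of $X$.
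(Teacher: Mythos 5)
Your argument is correct and follows essentially the same route as the paper: both use the Stacks Project decomposition $X=X^{\mathrm{fin}}\amalg X^{\geq 1}$ and deduce $X^p\subseteq X^{\mathrm{fin}}$ from the isolatedness of $p$, then conclude by composing the closed immersion $X^p\hookrightarrow X^{\mathrm{fin}}$ with the finite map $X^{\mathrm{fin}}\to\Spec k\bb{t}$. The only cosmetic difference is that you argue by contradiction via the dimension of $(X^{\geq 1})_0$ while the paper argues directly via finiteness of $\mc{O}_{X_0,p}$; these are two phrasings of the same observation.
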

\begin{proof}
Since $p$ is isolated, the local ring $\mc{O}_{X_0,p}$ is finite as a $k$-module. In particular, the special fiber of any irreducible component of $X$ containing $p$ must be finite over $k$, so the decomposition $X=X^{\mr{fin}}\amalg X^{\geq 1}$ implies that $X^p$ is a closed subscheme of $X^{\mr{fin}}$. The finiteness of $X^p\to\Spec{k\bb{t}}$ now follows from the finiteness of $X^{\mr{fin}}\to\Spec{k\bb{t}}$.
\end{proof}

We are now ready to show that $p$ is the only point in the special fiber $(X^p)_0$. Since $Y=Y_0\amalg Y_t$ (set-theoretically) for any $k\bb{t}$-scheme $Y$, it will follow that $X^p-\{p\}=(X^p)_t$. By construction, $(X^p)_t$ consists of the points that map to $p$ under $X_t\to X_0$, or in other words, the points that $p\in X_0$ splits into in the generic fiber $X_t$.

\begin{prop}\label{prop:X^p_0=p}
If $p\in X_0$ is isolated, then $p$ is the only point of $(X^p)_0$.
\end{prop}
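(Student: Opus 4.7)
The plan is to show that $X^p$ is the spectrum of a \emph{local} finite $k\bb{t}$-algebra, after which the statement follows from elementary commutative algebra. First I would apply Proposition~\ref{prop:X^p finite} to conclude that $X^p\to\Spec k\bb{t}$ is finite; since $X\subseteq\mb{A}^n_{k\bb{t}}$ is affine, so is $X^p$, and I may write $X^p=\Spec A$ with $A$ a finite $k\bb{t}$-algebra.

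Because $k\bb{t}$ is a complete (hence henselian) local Noetherian ring, any finite $k\bb{t}$-algebra decomposes canonically as a product of local $k\bb{t}$-algebras (see, e.g., \cite[Tag 04GG]{stacks}). Writing $A=\prod_i A_i$ with each $A_i$ local corresponds geometrically to the decomposition of $X^p$ into its connected components $\Spec A_i$. By definition, every irreducible component of $X^p$ is an irreducible component of $X$ containing $p$; hence every connected component of $X^p$ contains $p$. Since distinct connected components are disjoint, there can be only one, so $A$ itself is local with some maximal ideal $\mf{m}$.

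To finish, I would observe that the finite morphism $\Spec A\to\Spec k\bb{t}$ sends closed points to closed points, so the unique closed point $\mf{m}$ of $\Spec A$ lies over the closed point $(t)\in\Spec k\bb{t}$; in particular $t\in\mf{m}$. Consequently $A/tA$ is local with maximal ideal $\mf{m}/tA$, and so $(X^p)_0=\Spec(A/tA)$ consists of a single (closed) point. Since $p\in(X^p)_0$, that point must be $p$.

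The only nontrivial step is the decomposition of $A$ into local factors, which relies on the henselianness of $k\bb{t}$; the remainder is routine. The rest of the paper's surrounding machinery (the $X=X^{\mr{fin}}\amalg X^{\geq 1}$ decomposition from \cite[Tag 04GG]{stacks}) is not strictly needed for this particular statement, although it fits naturally with it: once we know $A$ is local and $t$ lies in its maximal ideal, the set-theoretic decomposition $X^p=\{p\}\amalg (X^p)_t$ expresses exactly the intuition that the general fiber of $X^p$ consists of the points into which $p$ splits.
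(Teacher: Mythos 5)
Your proof is correct, but it takes a genuinely different route from the paper's. The paper works irreducible component by irreducible component: for a maximal point $x_t$ of $(X^p)_t$, it takes the integral closure $R$ of $k\bb{t}$ in $\kappa(x_t)$, uses the valuative criterion of properness to lift $\Spec\kappa(x_t)\to X^p$ to a map $\Spec R\to X^p$ whose image is a component through $p$, and then invokes the Cohen structure theorem ($R\cong L\bb{u}$) to conclude the special fiber of each component is a single point. Your argument instead exploits the henselianness of $k\bb{t}$ together with the connectedness of $X^p$: every irreducible component of $X^p$ contains $p$ by definition, so $X^p$ is connected; the characterization of henselian local rings then forces $\mc{O}_{X^p}(X^p)$ to be local, and since the maximal ideal lies over $(t)$ (going-up for finite extensions), the Artinian quotient $\mc{O}_{X^p}(X^p)/(t)$ inherits locality, so $(X^p)_0$ is a single point, which must be $p$. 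Your approach is shorter and, as a bonus, establishes locality of $\mc{O}_{X^p}(X^p)$ directly, whereas the paper proves that fact separately as Proposition~\ref{prop:X^p local} via a quasi-compactness argument (citing~\cite{Che17}). The paper's route is more explicit about the structure of the individual components; your route leans on the same henselian machinery (\cite[Tag 04GG]{stacks}) that the paper already invoked to split $X=X^{\mr{fin}}\amalg X^{\geq 1}$, so it sits well within the paper's toolkit.
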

\begin{proof}
Let $x_t\in(X^p)_t$ be a point. The residue field $\kappa(x_t)$ of $x_t$ is a finite extension of $k\pp{t}$. Letting $R$ be the integral closure of $k\bb{t}$ in $\kappa(x_t)$, we get a commutative diagram
\begin{equation}\label{eq:diagram}
\begin{tikzcd}
\Spec{\kappa(x_t)}\arrow[r]\arrow[d] & X^p\arrow[d]\\
\Spec{R}\arrow[r] & \Spec{k\bb{t}}.
\end{tikzcd}
\end{equation}
The map $\Spec{R}\to\Spec{k\bb{t}}$ is finite by \cite[\href{https://stacks.math.columbia.edu/tag/032Q}{Lemma 032Q}]{stacks} and \cite[\href{https://stacks.math.columbia.edu/tag/032L}{Lemma 032L}]{stacks} if $\Char{k}=0$ or \cite[\href{https://stacks.math.columbia.edu/tag/032N}{Lemma 032N}]{stacks} if $\Char{k}\neq 0$. Since $X^p\to\Spec{k\bb{t}}$ is finite, it is also a proper morphism, so the valuative criterion for properness \cite[\href{https://stacks.math.columbia.edu/tag/0A40}{Lemma 0A40}]{stacks} implies that there is a unique morphism $\Spec{R}\to X^p$ that commutes with Diagram~\ref{eq:diagram}. Moreover, the image of $\Spec{R}$ (which we denote by $x$) is a component of $X^p$, since having a finite map $X^p\to\Spec{k\bb{t}}$ implies that $\dim{X^p}\leq\dim{\Spec{k\bb{t}}}=\dim{R}$ \cite[\href{https://stacks.math.columbia.edu/tag/0ECG}{Lemma 0ECG}]{stacks}. Thus $p\in x$ by definition of $X^p$. 

By \cite[Section 3.2.4, Theorem 2]{BGR84}, $R$ is a complete discrete valuation ring. Since $k\bb{t}\to R$ is finite, $\dim{R}=1$. The Cohen structure theorem (see e.g. \cite[\href{https://stacks.math.columbia.edu/tag/0323}{Section 0323}]{stacks}) thus implies that $R\cong L\bb{u}$ for some finite extension $L/k$ and some parameter $u$. In particular, the special fiber $(\Spec{R})_0$ contains a unique point, so the special fiber $x_0\in (X^p)_0$ consists of a unique point. As $p\in x$, it follows that $x_0=p$, so the special fiber of any component of $X^p$ consists solely of the point $p$.
\end{proof}

Our next goal is to show that $\mc{O}_{X^p}(X^p)$ is a free $k\bb{t}$-module and that $(f_1+tg_1,\ldots,f_n+tg_n)$ is  regular sequence. This will allow us to define the local degree $\deg_p^{\mb{A}^1}(f_1+tg_1,\ldots,f_n+tg_n)\in\GW(k\bb{t})$ as the isomorphism class of the Scheja--Storch bilinear form $\mc{O}_{X^p}(X^p)\times\mc{O}_{X^p}(X^p)\to k\bb{t}$~\cite[\S 3]{SS75}.

\begin{prop}\label{prop:X^p local}
If $p\in X_0$ is isolated, then $\mc{O}_{X^p}(X^p)$ is a local ring.
\end{prop}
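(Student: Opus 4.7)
The plan is to realize $\mathcal{O}_{X^p}(X^p)$ as a finite algebra over the complete local ring $k\bb{t}$ and then apply the classical structure theorem that such an algebra decomposes as a product of local rings indexed by its maximal ideals. Combined with the identification of maximal ideals with points of the special fiber, Proposition~\ref{prop:X^p_0=p} will force the product to collapse to a single factor.

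First I would record that $X^p\to\Spec k\bb{t}$ is finite by Proposition~\ref{prop:X^p finite}, so $X^p$ is affine; write $X^p=\Spec A$ with $A=\mathcal{O}_{X^p}(X^p)$ a finite $k\bb{t}$-algebra. Next I would invoke the standard decomposition $A\cong\prod_{\mathfrak m}A_{\mathfrak m}$ indexed by the finitely many maximal ideals of $A$, a classical fact for finite algebras over complete Noetherian local rings (citable from the Stacks project). What remains is to show that $A$ has a single maximal ideal.

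For this I would identify the maximal ideals of $A$ with the points of the special fiber $(X^p)_0=\Spec(A/tA)$. Integrality of $A$ over $k\bb{t}$ together with lying-over ensures that every maximal ideal of $A$ contracts to $(t)\subset k\bb{t}$; conversely, every prime of $A$ containing $tA$ is maximal, since the quotient is a finite integral domain over the field $k=k\bb{t}/(t)$ and hence itself a field. Thus maximal ideals of $A$ correspond to points of $(X^p)_0$, which by Proposition~\ref{prop:X^p_0=p} is the single point $\{p\}$. Consequently $A$ has a unique maximal ideal and is local.

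The only potential obstacle is essentially bookkeeping: pinning down the precise reference for the product decomposition of a finite algebra over a complete local Noetherian ring, and verifying the bijection between maximal ideals and points of the special fiber. Once these are in place, the conclusion is immediate from Proposition~\ref{prop:X^p_0=p}.
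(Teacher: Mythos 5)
Your proof is correct, and it takes a genuinely different route from the paper's. The paper argues topologically: from the proof of Proposition~\ref{prop:X^p_0=p} it extracts that $X^p$ has a unique closed point, notes that finiteness of $X^p\to\Spec k\bb{t}$ makes the underlying space quasi-compact, and then invokes a criterion of Chen (\cite[Proposition 4]{Che17}) stating that a quasi-compact scheme with a unique closed point is the spectrum of a local ring. Your argument is instead purely commutative-algebraic: you start from the affineness of $X^p$ (already forced by finiteness over the affine base, so you avoid deducing affineness from Chen's result as the paper does), apply the classical product decomposition of a finite algebra over a complete (or more generally henselian) Noetherian local ring into its localizations at maximal ideals, and then pin down the index set by showing that maximal ideals of $A=\mathcal O_{X^p}(X^p)$ are exactly the primes lying over $(t)$, i.e.\ the points of $(X^p)_0$, which Proposition~\ref{prop:X^p_0=p} identifies as the single point $p$. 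One small terminological note: the fact that maximal ideals of $A$ contract to maximal ideals of $k\bb{t}$ is not ``lying over'' (that is surjectivity of $\Spec A\to\Spec k\bb{t}$) but rather the standard corollary of incomparability/going-up for integral extensions that maximality is preserved and reflected under contraction; the intended statement is true, just mislabeled. Overall, your approach trades Chen's structure theorem for the classical semi-local decomposition, which is arguably more elementary and self-contained, while the paper's is more concise once Chen's result is granted.
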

\begin{proof}
In the proof of Proposition~\ref{prop:X^p_0=p}, we saw that (as a set of points) $X^p$ consists of a set of maximal points $x_t\in(X^p)_t$ and a unique closed point $p\in(X^p)_0$. Since $X^p\to\Spec{k\bb{t}}$ is finite, it is also quasi-compact. In particular, the topological space underlying $X^p$ is quasi-compact by~\cite[\href{https://stacks.math.columbia.edu/tag/01K4}{Lemma 01K4}]{stacks}. Now~\cite[Proposition 4]{Che17} implies that $X^p=\Spec{R}$ for some local ring $R$. In particular, $X^p$ is affine, so $R\cong\mc{O}_{X^p}(X^p)$.
\end{proof}

\begin{prop}\label{prop:X^p reg}
If $p\in X_0$ is isolated, then there exists a $k\bb{t}$-module $M$ such that $(f_1+tg_1,\ldots,f_n+tg_n)$ is a regular sequence in $M$ and $\mc{O}_{X^p}(X^p)\cong M/(f_1+tg_1,\ldots,f_n+tg_n)$.
\end{prop}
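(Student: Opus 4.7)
The plan is to take $M$ to be the local ring of $\mb{A}^n_{k\bb{t}}$ at $p$, viewed as a closed point of the special fiber, and then verify directly that $I:=(f_1+tg_1,\ldots,f_n+tg_n)$ is generated by an $M$-regular sequence with quotient $\mc{O}_{X^p}(X^p)$. Concretely, let $\mf{P}\subset k\bb{t}[x_1,\ldots,x_n]$ be the maximal ideal containing $t$ whose image in $k[x_1,\ldots,x_n]=k\bb{t}[x_1,\ldots,x_n]/(t)$ is the maximal ideal $\mf{m}_p$ defining $p$, and set $M:=k\bb{t}[x_1,\ldots,x_n]_{\mf{P}}$. Since $k\bb{t}[x_1,\ldots,x_n]$ is a regular ring of Krull dimension $n+1$ and $\mf{P}$ has height $n+1$, the ring $M$ is a regular (hence Cohen--Macaulay) local ring of dimension $n+1$. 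Each $f_i+tg_i$ lies in $\mf{P}$, since the lift of $f_i$ to $k\bb{t}[x_1,\ldots,x_n]$ vanishes at $p$ (as $p\in X_0=\mb{V}(f_1,\ldots,f_n)$) and $tg_i\in(t)\subset\mf{P}$.

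To identify $M/I$ with $\mc{O}_{X^p}(X^p)$: by construction, $M/I=(k\bb{t}[x_1,\ldots,x_n]/I)_{\mf{P}}$ is the stalk $\mc{O}_{X,p}$. Krull's height theorem, applied to the $n$ generators of $I$ inside the $(n+1)$-dimensional regular ring $k\bb{t}[x_1,\ldots,x_n]$, shows that every irreducible component of $X$ has dimension at least $1$. Because $p$ is isolated in $X_0$, it cannot lie on any component of $X$ whose special fiber has positive dimension, so every irreducible component of $X$ through $p$ lies in $X^{\mr{fin}}$ and is therefore a component of $X^p$. The stalk $\mc{O}_{X,p}$ thus agrees with $\mc{O}_{X^p,p}$, which in turn equals $\mc{O}_{X^p}(X^p)$ by Proposition~\ref{prop:X^p local}, since $X^p$ is affine and local with unique closed point $p$.

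For the regularity of the sequence, a dimension count suffices: Krull gives $\dim(X^p)\geq 1$, while the finiteness $X^p\to\Spec{k\bb{t}}$ from Proposition~\ref{prop:X^p finite} gives $\dim(X^p)\leq 1$. Hence $\dim(M/I)=1=\dim(M)-n$, so $I$ is an ideal of height $n$ generated by $n$ elements in a Cohen--Macaulay local ring. Such an ideal is generated by a regular sequence in any order, so $(f_1+tg_1,\ldots,f_n+tg_n)$ is $M$-regular. The main obstacle is the identification $M/I\cong\mc{O}_{X^p}(X^p)$: we must ensure that localizing the coordinate ring of $X$ at $p$ sees exactly the components of $X$ passing through $p$, rather than picking up stray contributions from components in $X^{\geq 1}$. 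It is precisely the isolatedness of $p$ in $X_0$ that makes this work.
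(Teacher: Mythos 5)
Your proof takes essentially the same approach as the paper: both set $M$ to be the localization of $k\bb{t}[x_1,\ldots,x_n]$ at the maximal ideal at $p$, observe that $M$ is regular local (hence Cohen--Macaulay) of dimension $n+1$, and then apply the Cohen--Macaulay criterion for regular sequences. The one genuine difference is the dimension count used to invoke that criterion. The paper adjoins $t$ to the sequence and uses that $\mc{O}_{X_0,p}\cong M/(f_1+tg_1,\ldots,f_n+tg_n,t)$ has dimension~$0$ --- a fact that follows immediately and unambiguously from $p$ being isolated in $X_0$. You instead argue $\dim M/(f_1+tg_1,\ldots,f_n+tg_n)=1$ via Krull's height theorem (lower bound) and finiteness of $X^p\to\Spec k\bb{t}$ from Proposition~\ref{prop:X^p finite} (upper bound). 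Both counts are valid, but yours passes through the identification $M/I\cong\mc{O}_{X^p}(X^p)$ to transfer the dimension of $X^p$, whereas the paper's count is independent of that identification. Since the identification is the delicate point in either write-up, it would be cleaner to compute $\dim(M/I)$ directly (any chain of primes of $R$ inside $\mf{P}$ starts at a minimal prime through $p$, hence lies in $X^{\mr{fin}}$, which has dimension~$\leq 1$) rather than going through $\dim X^p$.

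The step deserving more care is the ``thus'' in ``The stalk $\mc{O}_{X,p}$ thus agrees with $\mc{O}_{X^p,p}$.'' Knowing that every component of $X$ through $p$ lies in $X^{\mr{fin}}$ gives $\mc{O}_{X,p}=\mc{O}_{X^{\mr{fin}},p}$ (since $X^{\mr{fin}}$ is open and closed), but it does not by itself give $\mc{O}_{X^{\mr{fin}},p}=\mc{O}_{X^p,p}$, since $X^p$ is a priori a \emph{closed} subscheme of $X^{\mr{fin}}$. One still needs to know either that $X^p$ is open near $p$ (e.g.\ because $X^p$ is disjoint from the remaining components of $X^{\mr{fin}}$, which one can extract from the properness of $X^{\mr{fin}}\to\Spec k\bb{t}$ and the fact that $(X^p)_0=\{p\}$), or that $\mc{O}_{X,p}$ has no embedded primes (which would follow from the Cohen--Macaulayness you are in the middle of establishing, creating a potential circularity). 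The paper's own justification, via the set-theoretic claim $\Spec S^{-1}R=X^p$, is similarly terse, so this is a shared weak point rather than a defect particular to your argument; but if you want a self-contained proof, make the openness of $X^p$ near $p$ explicit.
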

\begin{proof}
Let $\mf{m}\subset k\bb{t}[x_1,\ldots,x_n]$ be the maximal ideal corresponding to the point $p$ over $k\bb{t}$ (with $t\in\mf{m}$), and let $\mf{m}_0=\mf{m}/(t)=\mf{m}\cap k[x_1,\ldots,x_n]$ (which corresponds to $p$ over $k$). Set $R=\frac{k\bb{t}[x_1,\ldots,x_n]}{(f_1+tg_1,\ldots,f_n+tg_n)}$. Let $\min(R)$ be the set of minimal primes of $R$ (corresponding to the irreducible components of $X$), and let $\min(R)^p$ be the set of minimal primes of $R$ that are contained in the image of $\mf{m}$ (corresponding to the irreducible components of $X^p$). Finally, let $S=R-\min(R)^p$. We claim that $\mc{O}_{X^p}(X^p)\cong S^{-1}R$, from which it will follow that $\mc{O}_{X^p}(X^p)\cong\frac{Q^{-1}(k\bb{t}[x_1,\ldots,x_n])}{(f_1+tg_1,\ldots,f_n+tg_n)}$ for some multiplicatively closed subset $Q\subset k\bb{t}[x_1,\ldots,x_n]$ (since localization commutes with quotients).

To prove the claim, we first note that $S$ is multiplicatively closed. Indeed, since $X^p$ is finite over a Noetherian base, $X^p$ is Noetherian. Thus $X^p$ has finitely many components, so $\min(R)^p$ is a finite set of primes. Moreover, $\mf{p}\in\Spec{S^{-1}R}$ if and only if $\mf{p}\in\min(R)^p$, so $\Spec{S^{-1}R}=X^p$.

The assumption that $p\in X_0$ is isolated implies that the local ring $\mc{O}_{X_0,p}$ has dimension 0. Note that $\mc{O}_{X_0,p}\cong\frac{Q^{-1}(k\bb{t}[x_1,\ldots,x_n])}{(f_1+tg_1,\ldots,f_n+tg_n,t)}$. Since $M:=Q^{-1}(k\bb{t}[x_1,\ldots,x_n])_\mf{m}$ is a regular local ring (of dimension $n+1$), it is a local Cohen--Macaulay ring. Thus \cite[\href{https://stacks.math.columbia.edu/tag/02NJ}{Lemma 02NJ}]{stacks} implies that $(f_1+tg_1,\ldots,f_n+tg_n,t)$ is a regular sequence in $M$. It follows that $(f_1+tg_1,\ldots,f_n+tg_n)$ is also a regular sequence in $M$. Since $S^{-1}R$ is already local by Proposition~\ref{prop:X^p local}, we also have $\mc{O}_{X^p}(X^p)\cong S^{-1}R\cong M/(f_1+tg_1,\ldots,f_n+tg_n)$.
\end{proof}

\begin{prop}\label{prop:X^p flat}
If $p\in X_0$ is isolated, then $X^p\to\Spec{k\bb{t}}$ is flat.
\end{prop}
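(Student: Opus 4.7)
The plan is to exploit the fact that $k\bb{t}$ is a discrete valuation ring with uniformizer $t$, so that flatness of a $k\bb{t}$-module is equivalent to being torsion-free. Since every nonzero element of $k\bb{t}$ is a unit times a power of $t$, torsion-freeness reduces to the single condition that $t$ act as a non-zero-divisor. Thus the proposition reduces to showing that $t$ is a non-zero-divisor on $\mc{O}_{X^p}(X^p)$.

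For this, I will invoke the regular-sequence structure already set up in Proposition~\ref{prop:X^p reg}. That proof produces a regular local ring $M$ of dimension $n+1$ such that
\[
\mc{O}_{X^p}(X^p) \;\cong\; M/(f_1+tg_1,\ldots,f_n+tg_n),
\]
and further establishes, via the Cohen--Macaulay property of $M$ together with the dimension drop $\dim \mc{O}_{X_0,p}=0$, that $(f_1+tg_1,\ldots,f_n+tg_n,\,t)$ is a regular sequence in $M$. By the very definition of a regular sequence, the fact that $t$ occurs as the last element means precisely that $t$ is a non-zero-divisor on the intermediate quotient $M/(f_1+tg_1,\ldots,f_n+tg_n) = \mc{O}_{X^p}(X^p)$. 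Combining with the DVR characterization of flatness above yields that $\mc{O}_{X^p}(X^p)$ is a flat $k\bb{t}$-module, and hence that $X^p \to \Spec{k\bb{t}}$ is flat.

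No serious obstacle arises: the bulk of the work was already carried out in Proposition~\ref{prop:X^p reg}, and the only additional ingredient is the standard equivalence of flatness and torsion-freeness over a DVR. The one point worth double-checking is that the regular-sequence assertion in the previous proposition genuinely includes $t$ as its final element (rather than being truncated at length $n$), so that the last-element-regular clause can be read off directly; a quick inspection of that argument confirms this is how it is phrased.
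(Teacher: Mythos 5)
Your proof is correct, but it takes a genuinely different route from the paper. The paper invokes Matsumura's ``miracle flatness'' criterion (\cite[Theorem 23.1]{Mat89}): since $k\bb{t}$ is regular local of dimension 1, $S^{-1}R\cong\mc{O}_{X^p}(X^p)$ is Cohen--Macaulay local of dimension 1, and the closed fiber $S^{-1}R\otimes k\cong\mc{O}_{X_0,p}$ has dimension 0, the dimension equality forces flatness. You instead use the elementary characterization of flatness over a DVR as torsion-freeness, and extract the needed non-zero-divisor property of $t$ directly from the regular sequence $(f_1+tg_1,\ldots,f_n+tg_n,t)$ on $M$, which Proposition~\ref{prop:X^p reg} already established en route to its own conclusion. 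Both arguments ultimately rest on the same Cohen--Macaulay bookkeeping from that earlier proposition, but your route is more elementary in flavor — it bypasses the miracle flatness theorem entirely in favor of the DVR criterion plus the definition of a regular sequence — whereas the paper's argument is a bit more conceptual and would generalize more readily to higher-dimensional bases (where the torsion-free characterization no longer implies flatness). Your cautionary remark at the end is well placed: the argument does hinge on $t$ appearing as the final element of the regular sequence, and the paper does phrase it that way.
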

\begin{proof}
Using the notation in the proof of Proposition~\ref{prop:X^p reg}, we have that $k\bb{t}$ is a regular local ring of dimension 1, $S^{-1}R$ is Cohen--Macaulay of dimension 1, and $S^{-1}R\otimes k\cong\mc{O}_{X_0,p}$ has dimension 0. Thus~\cite[Theorem 23.1 (p. 179)]{Mat89} implies that $X^p\to\Spec{k\bb{t}}$ is flat.
\end{proof}

\begin{prop}
If $p\in X_0$ is isolated, then $\mc{O}_{X^p}(X^p)$ is a free $k\bb{t}$-module.
\end{prop}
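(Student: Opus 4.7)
The plan is to combine the three structural properties of $X^p \to \Spec{k\bb{t}}$ that have already been established: finiteness (Proposition~\ref{prop:X^p finite}), locality of $\mc{O}_{X^p}(X^p)$ (Proposition~\ref{prop:X^p local}), and flatness (Proposition~\ref{prop:X^p flat}). Freeness will then follow from a standard commutative algebra fact, so there is essentially no obstacle here — the work has been done in the preceding propositions.

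First, I would invoke Proposition~\ref{prop:X^p finite} to conclude that $\mc{O}_{X^p}(X^p)$ is a finitely generated $k\bb{t}$-module, since finite morphisms correspond to module-finite ring extensions. Next, by Proposition~\ref{prop:X^p flat}, $\mc{O}_{X^p}(X^p)$ is flat over $k\bb{t}$. The base ring $k\bb{t}$ is a Noetherian local ring (in fact a complete discrete valuation ring), so the standard result that a finitely generated flat module over a Noetherian local ring is free (see e.g. \cite[\href{https://stacks.math.columbia.edu/tag/00NZ}{Lemma 00NZ}]{stacks}) gives the conclusion.

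Alternatively, one could argue more directly: since $k\bb{t}$ is a principal ideal domain and $\mc{O}_{X^p}(X^p)$ is finitely generated, the structure theorem for finitely generated modules over a PID gives a decomposition into a free part and a torsion part; flatness forces the torsion part to vanish. Either route is routine, so the main content of this proposition is simply assembling the ingredients from Propositions~\ref{prop:X^p finite} and~\ref{prop:X^p flat}.
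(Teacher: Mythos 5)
Your proof is correct and takes essentially the same route as the paper: finiteness (Proposition~\ref{prop:X^p finite}) plus flatness (Proposition~\ref{prop:X^p flat}) over the Noetherian local ring $k\bb{t}$ yields freeness. The paper additionally cites Proposition~\ref{prop:X^p local} (locality of $\mc{O}_{X^p}(X^p)$) in this step, passing through projectivity and local freeness, but as your argument correctly shows, only the locality of the base ring $k\bb{t}$ is actually needed.
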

\begin{proof}
Since $k\bb{t}$ is Noetherian and $X^p\to \Spec{k\bb{t}}$ is finite, flatness (Proposition~\ref{prop:X^p flat}) implies that $\mc{O}_{X^p}(X^p)$ is a projective $k\bb{t}$-module. Since projective modules are locally free, $\mc{O}_{X^p}(X^p)$ being local (Proposition~\ref{prop:X^p local}) that $\mc{O}_{X^p}(X^p)$ is a free $k\bb{t}$-module.
\end{proof}

We can now define $\deg^{\mb{A}^1}_p(f_1+tg_1,\ldots,f_n+tg_n)\in\GW(k\bb{t})$.

\begin{defn}
Let $(f_1,\ldots,f_n):\mb{A}^n_k\to\mb{A}^n_k$ with isolated zero $p$. Let $g_1,\ldots,g_n$ be any elements of $k\bb{t}[x_1,\ldots,x_n]$ such that
\[\Spec\frac{k\bb{t}[x_1,\ldots,x_n]}{(f_1+tg_1,\ldots,f_n+tg_n)}\to\Spec{k\bb{t}}\]
is finite and flat. Let $X=\mb{V}(f_1+t_1g_1,\ldots,f_n+tg_n)$. Define $\deg_p^{\mb{A}^1}(f_1+tg_1,\ldots,f_n+tg_n)\in\GW(k\bb{t})$ to be the isomorphism class of the Scheja--Storch bilinear pairing $\mc{O}_{X^p}(X^p)\times\mc{O}_{X^p}(X^p)\to k\bb{t}$ determined by the regular sequence $(f_1+tg_1,\ldots,f_n+tg_n)$.
\end{defn}

Putting this all together, we get the following rephrasing of \cite[Theorem 5]{PW20}:

\begin{thm}[Dynamic local $\mb{A}^1$-degree]
\label{thm:dynamic degree}
Let $(f_1,\ldots,f_n):\mb{A}^n_k\to\mb{A}^n_k$ with isolated zero $p$. Let $g_1,\ldots,g_n$ be any elements of $k\bb{t}[x_1,\ldots,x_n]$ such that
\[\Spec\frac{k\bb{t}[x_1,\ldots,x_n]}{(f_1+tg_1,\ldots,f_n+tg_n)}\to\Spec{k\bb{t}}\]
is finite and flat. Let $X=\mb{V}(f_1+t_1g_1,\ldots,f_n+tg_n)$, and let $X^p_t:=(X^p)_t\subset\mb{A}^n_{k\pp{t}}$ be the collection of points that $p$ splits into under the deformation $X_0\mapsto X_t$. Then
\[\deg_p^{\mb{A}^1}(f_1,\ldots,f_n)=\jmath^{-1}|_{\mr{im}(\imath)}\left(\sum_{z\in X^p_t}\deg_z^{\mb{A}^1}(f_1+tg_1,\ldots,f_n+tg_n)\right)\]
as elements of $\GW(k)$.
\end{thm}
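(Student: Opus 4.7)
The plan is to define a ``family degree'' $\Delta \in \GW(k\bb{t})$ via the Scheja--Storch form on the free $k\bb{t}$-module $\mc{O}_{X^p}(X^p)$, and then show that $\mr{ev}_0(\Delta) = \deg_p^{\mb{A}^1}(f_1,\ldots,f_n)$ while $\imath(\Delta) = \sum_{z\in X^p_t}\deg_z^{\mb{A}^1}(f_1+tg_1,\ldots,f_n+tg_n)$. Applying Proposition~\ref{prop:triangle of gw}, which gives $\imath = \jmath \circ \mr{ev}_0$, we may recover $\mr{ev}_0(\Delta) = \jmath^{-1}|_{\mr{im}(\imath)}(\imath(\Delta))$, yielding the theorem.

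To set things up, combine Propositions~\ref{prop:X^p finite}--\ref{prop:X^p flat} (and the freeness proposition that follows) with Proposition~\ref{prop:X^p reg}: $(f_1+tg_1,\ldots,f_n+tg_n)$ is a regular sequence in a regular local ring $M$, and $\mc{O}_{X^p}(X^p)$ is the corresponding quotient and is a free $k\bb{t}$-module of finite rank. This allows us to define $\Delta := \deg_p^{\mb{A}^1}(f_1+tg_1,\ldots,f_n+tg_n) \in \GW(k\bb{t})$ as the Scheja--Storch form on $\mc{O}_{X^p}(X^p)$ viewed over $k\bb{t}$.

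Next, I analyze the two fibers. For the special fiber, Proposition~\ref{prop:X^p_0=p} shows $(X^p)_0 = \{p\}$, so $\mc{O}_{X^p}(X^p)\otimes_{k\bb{t}} k \cong \mc{O}_{X_0,p}$. Because $(f_i+tg_i)|_{t=0}=f_i$, the Scheja--Storch form is compatible with base change along $k\bb{t}\to k$ (write the Gram matrix in a $k\bb{t}$-basis and reduce mod $t$), yielding $\mr{ev}_0(\Delta) = \deg_p^{\mb{A}^1}(f_1,\ldots,f_n)$. For the generic fiber, the flatness and finiteness of $X^p\to\Spec k\bb{t}$ combined with the decomposition $X^p = \{p\} \sqcup X^p_t$ set-theoretically give
\[
\mc{O}_{X^p}(X^p)\otimes_{k\bb{t}}k\pp{t} \cong \prod_{z\in X^p_t}\mc{O}_{X^p_t,z}.
\]
The Scheja--Storch pairing respects this product decomposition (it is additive on orthogonal summands corresponding to disjoint points) and its restriction to each factor is the Scheja--Storch form of the regular sequence $(f_1+tg_1,\ldots,f_n+tg_n)$ on the local ring $\mc{O}_{X^p_t,z}$, which is exactly $\deg_z^{\mb{A}^1}(f_1+tg_1,\ldots,f_n+tg_n)$. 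Hence $\imath(\Delta) = \sum_{z\in X^p_t}\deg_z^{\mb{A}^1}(f_1+tg_1,\ldots,f_n+tg_n)$.

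Combining the two computations with Proposition~\ref{prop:triangle of gw} yields the claimed formula. The main obstacle is verifying the base change and localization compatibilities of the Scheja--Storch form carefully enough to identify $\mr{ev}_0(\Delta)$ and $\imath(\Delta)$ with the classical $\mb{A}^1$-local degrees. In effect this requires picking a $k\bb{t}$-basis of $\mc{O}_{X^p}(X^p)$ whose images give compatible bases of the special and generic fibers, tracking the distinguished socle element (and associated trace) through each reduction. This is exactly the technical content handled by \cite[Theorem 5]{PW20} in the dynamic $\mb{A}^1$-Milnor number setting, and the argument transfers verbatim to our local degree framework once the foundational results on $X^p$ above are in place.
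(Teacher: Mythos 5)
Your proposal is correct and follows essentially the same route as the paper: define the Scheja--Storch form $\Delta = \deg_p^{\mb{A}^1}(f+tg) \in \GW(k\bb{t})$ on $\mc{O}_{X^p}(X^p)$, compute its specializations under $\mr{ev}_0$ and $\imath$, and conclude via the commutative triangle of Proposition~\ref{prop:triangle of gw}. You spell out the base-change compatibilities of the Scheja--Storch form somewhat more explicitly than the paper, which defers these to ``by construction'' and the cited \cite[Theorem 5]{PW20}, but the underlying argument is identical.
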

\begin{proof}
Let $f=(f_1,\ldots,f_n)$ and $g=(g_1,\ldots,g_n)$. By construction, we have $\mr{ev}_0(\deg_p^{\mb{A}^1}(f+tg))=\deg_p^{\mb{A}^1}(f)$ as elements of $\GW(k)$. The map $\mb{A}^n_{k\bb{t}}\to\mb{A}^n_{k\pp{t}}$ induced by the inclusion $k\bb{t}\hookrightarrow k\pp{t}$ sends $p\in X\subset\mb{A}^n_{k\bb{t}}$ to $X^p_t\subset\mb{A}^n_{k\pp{t}}$, so
\[\imath(\deg_p^{\mb{A}^1}(f+tg))=\sum_{z\in X^p_t}\deg_z^{\mb{A}^1}(f+tg)\]
as elements of $\GW(k\pp{t})$. The result now follows from Proposition~\ref{prop:triangle of gw}.
\end{proof}

Since transversality is a generic condition, Theorem~\ref{thm:dynamic degree} implies that we can always interpret the local index $\ind_q\sigma$ as a sum of local indices in the transverse setting, even when $q$ is not a simple zero of $\sigma$. By Proposition~\ref{prop:transverse}, we can always geometrically interpret the local index as a sum of intersection volumes. For example, Theorem~\ref{thm:dynamic degree} allows us to remove the transversality assumption in Theorem~\ref{thm:bezout}:

\begin{cor}\label{cor:bezout non-transverse}
Let $f=(f_1,\ldots,f_n):\mb{A}^n_k\to\mb{A}^n_k$ be a morphism with isolated zero $p$. Assume that $k(p)/k$ is separable. Let $g_1,\ldots,g_n\in k\bb{t}[x_1,\ldots,x_n]$ be such that the hypersurfaces $\mb{V}(f_i+tg_i)\subseteq\mb{P}^n_{k\pp{t}}$ meet transversely. Let $Y=\mb{V}(f_1+tg_1,\ldots,f_n+tg_n)\to\Spec{k\bb{t}}$. Then
\[\deg_p^{\mb{A}^1}(f_1,\ldots,f_n)=\sum_{z\in Y^p-\{p\}}\Tr_{\kappa(z)/k\pp{t}}\langle\Vol(z)\rangle,\]
\end{cor}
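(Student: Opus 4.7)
The plan is to combine Theorem~\ref{thm:dynamic degree} with Proposition~\ref{prop:transverse}, the latter applied after base change to $k\pp{t}$. Theorem~\ref{thm:dynamic degree} already identifies $\deg_p^{\mb{A}^1}(f_1,\ldots,f_n)$ with $\jmath^{-1}|_{\mr{im}(\imath)}$ of a sum over $Y^p_t$ of local $\mb{A}^1$-degrees of the deformation $(f_1+tg_1,\ldots,f_n+tg_n)$, so the only work remaining is to identify each of those local degrees with an intersection volume.

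First I would invoke Proposition~\ref{prop:X^p_0=p} to rewrite $Y^p - \{p\}$ as the generic fiber $Y^p_t \subseteq \mb{A}^n_{k\pp{t}}$, thereby matching the indexing set of the sum in the statement with that of Theorem~\ref{thm:dynamic degree}. The transversality hypothesis on $\mb{V}(f_i + tg_i) \subseteq \mb{P}^n_{k\pp{t}}$ means every point $z \in Y^p_t$ is a simple zero of $f + tg$ viewed as a morphism $\mb{A}^n_{k\pp{t}} \to \mb{A}^n_{k\pp{t}}$. In particular, nonvanishing of the Jacobian at $z$ makes $\mc{O}_{Y,z} \cong \kappa(z)$ an \'etale extension of $k\pp{t}$, so $\kappa(z)/k\pp{t}$ is separable. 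Proposition~\ref{prop:transverse}, applied verbatim over $k\pp{t}$ in place of $k$, then gives $\deg_z^{\mb{A}^1}(f + tg) = \Tr_{\kappa(z)/k\pp{t}}\langle\Vol(z)\rangle$ in $\GW(k\pp{t})$ for each such $z$. Summing over $z$ and applying Theorem~\ref{thm:dynamic degree} produces the stated equality.

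The main subtlety is verifying that Proposition~\ref{prop:transverse}, which was stated over a field $k$, remains valid after base change to the Laurent series field $k\pp{t}$. Its proof only invokes the Jacobian formula of~\cite{KW19}, the base change identity of~\cite{BBMMO21}, and the existence of Nisnevich coordinates with compatible local trivializations (Proposition~\ref{prop:trivs exist}), none of which require special properties of the base beyond $\Char \neq 2$, so this goes through. A smaller point of care is that the equality as written in the corollary implicitly takes place in $\GW(k)$ after applying $\jmath^{-1}|_{\mr{im}(\imath)}$ to the right-hand side; one should either phrase everything in $\GW(k\pp{t})$ via $\jmath$, or note that by Theorem~\ref{thm:dynamic degree} the sum automatically lies in $\mr{im}(\imath)$ and then descends to $\GW(k)$ via the triangle of Proposition~\ref{prop:triangle of gw}.
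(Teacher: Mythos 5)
Your outline matches the paper's proof at the global level: identify $Y^p-\{p\}$ with the generic fiber $Y^p_t$, establish separability of $\kappa(z)/k\pp{t}$, apply the transverse local-degree formula to each $z$, and feed the result into Theorem~\ref{thm:dynamic degree}. The divergence is in the separability step, and this is where your argument has a gap.

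You claim that transversality at $z$ gives nonvanishing of the Jacobian, hence \'etaleness and separability. But the paper's notion of ``meet transversely'' is $i_z(\mb{V}(f_1+tg_1),\ldots,\mb{V}(f_n+tg_n))=[\kappa(z):k\pp{t}]$, i.e.\ the intersection is reduced of the expected length. In positive characteristic this does \emph{not} imply the Jacobian is nonzero: for $k=\mb{F}_3$, $n=1$, $f=x^3$, $g=-1$, the fiber over $k\pp{t}$ is $\Spec{\mb{F}_3\pp{t^{1/3}}}$, which is reduced (so ``transverse'' in this sense), yet the Jacobian $3x^2$ vanishes identically and $\kappa(z)/k\pp{t}$ is inseparable. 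Indeed, Proposition~\ref{prop:transverse}, which you wish to invoke, carries \emph{both} ``simple zero'' and ``separable residue field'' as separate hypotheses — if the former implied the latter, the proposition would not list them independently. Trying to deduce separability from the simple-zero hypothesis is therefore circular.

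A further red flag is that your argument never uses the hypothesis that $k(p)/k$ is separable. The paper's proof makes that hypothesis do the actual work: separability of $k(p)/k$ makes $p$ smooth over $k$; finiteness (Proposition~\ref{prop:X^p finite}) and hence properness of $\Phi\colon Y^p\to\Spec{k\bb{t}}$, combined with the openness of the smooth locus, forces $\Phi$ to be smooth everywhere once it is smooth above the closed point; restricting to the generic fiber shows $(Y^p)_t\to\Spec{k\pp{t}}$ is finite and smooth of relative dimension $0$, hence \'etale, hence $\kappa(z)/k\pp{t}$ is separable. You should replace your \'etale-by-Jacobian shortcut with this propagation-of-smoothness argument (or supply a separate justification that the hypotheses force $\Jac(f+tg)(z)\neq 0$). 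The remainder of your proof — the base change of Proposition~\ref{prop:transverse} to $k\pp{t}$, and the remark about $\jmath^{-1}|_{\mr{im}(\imath)}$ — is fine.
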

where $\Vol(z)$ is the intersection volume of $f_1+tg_1,\ldots,f_n+tg_n$ at $z$.
\begin{proof}
We first show that $\kappa(z)/k\pp{t}$ is separable for all $z\in Y^p-\{p\}$. Let $\Phi:Y^p\to\Spec{k\bb{t}}$ be the structure map, which is finite by Proposition~\ref{prop:X^p finite}. By \cite[\href{https://stacks.math.columbia.edu/tag/02GL}{Lemma 02GL (1)}]{stacks}, our assumption that $k(p)/k$ is separable implies that $p=\Spec{k(p)}$ is smooth over $k$. In particular, $\Phi$ is smooth at $\Phi^{-1}(0)=(Y^p)_0=p$. By \cite[\href{https://stacks.math.columbia.edu/tag/01V9}{Lemma 01V9}]{stacks}, there exists a non-empty open subset $U\subseteq Y^p$ such that $p\in U$ and $\Phi|_U$ is smooth. But $\Phi$ is proper and $Y^p-U$ is closed, so $\Phi(Y^p-U)\subseteq\Spec{k\bb{t}}$ is also closed. Any non-empty closed subset of $\Spec{k\bb{t}}$ contains the sole closed point 0. Since $p\not\in Y^p-U$, we have that $\Phi(Y^p-U)$ is empty and hence $Y^p=U$ (as $\Phi$ is surjective). It follows that $\Phi$ is smooth above the generic point, so $(Y^p)_t\to\Spec{k\pp{t}}$ is smooth. This map also inherits finiteness from $\Phi$, so $(Y^p)_t\to\Spec{k\pp{t}}$ is smooth of relative dimension 0 and is therefore \'etale. It now follows from \cite[\href{https://stacks.math.columbia.edu/tag/02GL}{Lemma 02GL (2)}]{stacks} that $\kappa(z)/k\pp{t}$ is separable for all $z\in(Y^p)_t=Y^p-\{p\}$.

Since we have assumed that $\mb{V}(f_i+tg_i)$ meet transversely, \cite[Section 3]{McK21} implies that $\deg_z^{\mb{A}^1}(f_1+tg_1,\ldots,f_n+tg_n)=\Tr_{\kappa(z)/k\pp{t}}\langle\Vol(z)\rangle$. The result now follows from Theorem~\ref{thm:dynamic degree}.
\end{proof}

\subsection{Computing the local degree in families}\label{sec:familial}
In essence, the dynamic approach enables us to compute the local $\mb{A}^1$-degree at a point by computing a sum of local $\mb{A}^1$-degrees over a nearby fiber. Using Harder's theorem~\cite[Lemma 30]{KW19}, we might instead try to compute $\deg_p^{\mb{A}^1}(f)$ by computing a sum of local $\mb{A}^1$-degrees over an arbitrary fiber in a family containing $p$. Since $\mb{V}(f)$ is zero dimensional, a family $X\to\Spec{k[t]}$ with special fiber $X_0=\mb{V}(f)$ is a branched cover of the affine line. We want to separate $p$, a point of higher intersection multiplicity, into a set of reduced points. We then wish to express $\deg_p^{\mb{A}^1}(f)$ as a sum of local $\mb{A}^1$-degrees over this set of reduced points.

However, if $X$ is ramified somewhere between the special fiber $X_0$ and the fiber over which we wish to compute the local $\mb{A}^1$-degree, then we may lose track of the individual points at which to compute --- there can be multiple points in the fiber $X_0$ that belong to the same connected component of $X$ (see Figure~\ref{fig:bad-harder}). We can avoid this issue by assuming that $X$ is ramified only at the fiber containing $p$ and removing the unwanted components of $X$ by localizing to the irreducible components of $X$ that contain $p$ (see Figure~\ref{fig:harder}). In a sense, this ramification assumption allows us to mimic the dynamic approach over the non-local base $\Spec{k[t]}$.

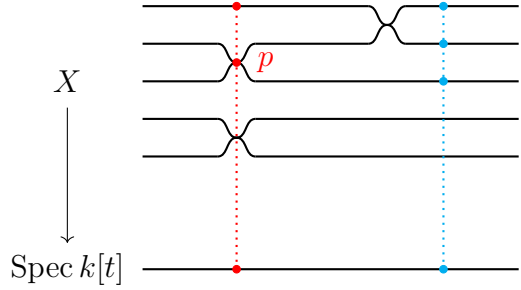
\begin{figure}
\begin{tikzpicture}
\draw (0,0) node {$\Spec{k[t]}$};
\draw (0,2.5) node {$X$};
\draw[<-] (0,0.35) -- (0,2.15);
\draw[thick] (1,3.5) -- (4,3.5) coordinate(e);
\draw[thick] (4.5,3.5) -- (6,3.5);
\draw[thick] (1,3) -- (2,3) coordinate(a);
\draw[thick] (2.5,3) -- (4,3) coordinate(f);
\draw[thick] (4.5,3) -- (6,3);
\draw[thick] (1,2.5) -- (2,2.5) coordinate(b);
\draw[thick] (2.5,2.5) -- (6,2.5);
\draw[thick] (1,2) -- (2,2) coordinate(c);
\draw[thick] (2.5,2) -- (6,2);
\draw[thick] (1,1.5) -- (2,1.5) coordinate(d);
\draw[thick] (2.5,1.5) -- (6,1.5);
\draw[thick] (1,0) -- (6,0);
\draw[thick]
  (a) ++(.25,-.25) coordinate(ab) to[out=180,in=0] (a)
  (ab) to[out=180,in=0] (b)
  (ab) to[out=0,in=180] ++(.25,.25)
  (ab) to[out=0,in=180] ++(.25,-.25)
  ;
\draw[thick]
  (c) ++(.25,-.25) coordinate(cd) to[out=180,in=0] (c)
  (cd) to[out=180,in=0] (d)
  (cd) to[out=0,in=180] ++(.25,.25)
  (cd) to[out=0,in=180] ++(.25,-.25)
  ;
\draw[thick]
  (e) ++(.25,-.25) coordinate(ef) to[out=180,in=0] (f)
  (ef) to[out=180,in=0] (e)
  (ef) to[out=0,in=180] ++(.25,.25)
  (ef) to[out=0,in=180] ++(.25,-.25)
  ;
\filldraw[red]
  (ab) circle(.05) node[anchor=west] {\ $p$}
  (2.25,3.5) circle(.05)
  (2.25,0) circle(.05)
  ;
\filldraw[cyan]
  (5,3.5) circle(.05)
  (5,3) circle(.05)
  (5,2.5) circle(.05)
  (5,0) circle(.05)
  ;
\draw[thick,cyan,dotted] (5,0) -- (5,3.5);
\draw[thick,red,dotted] (2.25,0) -- (2.25,3.5);
\end{tikzpicture}
\caption{Losing track of points that split off from $p$}\label{fig:bad-harder}
\end{figure}

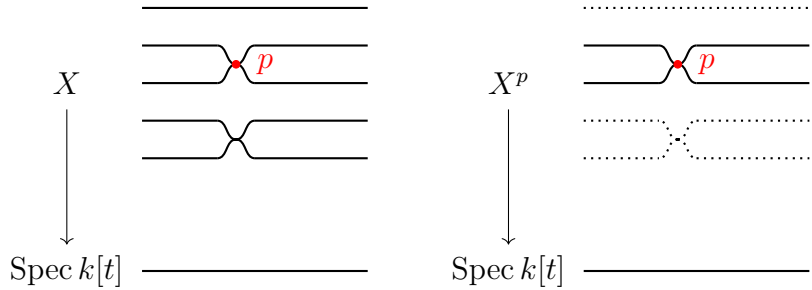
\begin{figure}
\begin{tikzpicture}
\draw (0,0) node {$\Spec{k[t]}$};
\draw (0,2.5) node {$X$};
\draw[<-] (0,0.35) -- (0,2.15);
\draw[thick] (1,3.5) -- (4,3.5);
\draw[thick] (1,3) -- (2,3) coordinate(a);
\draw[thick] (2.5,3) -- (4,3);
\draw[thick] (1,2.5) -- (2,2.5) coordinate(b);
\draw[thick] (2.5,2.5) -- (4,2.5);
\draw[thick] (1,2) -- (2,2) coordinate(c);
\draw[thick] (2.5,2) -- (4,2);
\draw[thick] (1,1.5) -- (2,1.5) coordinate(d);
\draw[thick] (2.5,1.5) -- (4,1.5);
\draw[thick] (1,0) -- (4,0);
\draw[thick]
  (a) ++(.25,-.25) coordinate(ab) to[out=180,in=0] (a)
  (ab) to[out=180,in=0] (b)
  (ab) to[out=0,in=180] ++(.25,.25)
  (ab) to[out=0,in=180] ++(.25,-.25)
  ;
\draw[thick]
  (c) ++(.25,-.25) coordinate(cd) to[out=180,in=0] (c)
  (cd) to[out=180,in=0] (d)
  (cd) to[out=0,in=180] ++(.25,.25)
  (cd) to[out=0,in=180] ++(.25,-.25)
  ;
\filldraw[red]
  (ab) circle(.05) node[anchor=west] {\ $p$}
  ;
\end{tikzpicture}
\qquad
\begin{tikzpicture}
\draw (0,0) node {$\Spec{k[t]}$};
\draw (0,2.5) node {$X^p$};
\draw[<-] (0,0.35) -- (0,2.15);
\draw[thick,dotted] (1,3.5) -- (4,3.5);
\draw[thick] (1,3) -- (2,3) coordinate(a);
\draw[thick] (2.5,3) -- (4,3);
\draw[thick] (1,2.5) -- (2,2.5) coordinate(b);
\draw[thick] (2.5,2.5) -- (4,2.5);
\draw[thick,dotted] (1,2) -- (2,2) coordinate(c);
\draw[thick,dotted] (2.5,2) -- (4,2);
\draw[thick,dotted] (1,1.5) -- (2,1.5) coordinate(d);
\draw[thick,dotted] (2.5,1.5) -- (4,1.5);
\draw[thick] (1,0) -- (4,0);
\draw[thick]
  (a) ++(.25,-.25) coordinate(ab) to[out=180,in=0] (a)
  (ab) to[out=180,in=0] (b)
  (ab) to[out=0,in=180] ++(.25,.25)
  (ab) to[out=0,in=180] ++(.25,-.25)
  ;
\draw[thick,dotted]
  (c) ++(.25,-.25) coordinate(cd) to[out=180,in=0] (c)
  (cd) to[out=180,in=0] (d)
  (cd) to[out=0,in=180] ++(.25,.25)
  (cd) to[out=0,in=180] ++(.25,-.25)
  ;
\filldraw[red]
  (ab) circle(.05) node[anchor=west] {\ $p$}
  ;
\end{tikzpicture}
\caption{Removing disjoint sheets}\label{fig:harder}
\end{figure}

For now, we will not assume that $X$ is branched only at one point. Instead, we will show how to compute a sum of local degrees via Harder's theorem after localizing to a connected component. One can then apply this more general result to the special case where $X$ is branched at only one point. Before describing the familial local $\mb{A}^1$-degree (Theorem~\ref{thm:harder}), we need the following analog of Proposition~\ref{prop:X^p finite}:

\begin{lem}\label{lem:finiteness}
Let $\vphi:X\to\Spec{k[t]}$ be a morphism of finite type, where $X$ is affine. Assume that every irreducible component of $X$ surjects onto $\Spec{k[t]}$ under $\vphi$, that $\vphi$ is unramified away from the preimage of a finite locus $B\subset\Spec{k[t]}$, and $\vphi^{-1}(B)$ is finite. Then $\vphi$ is finite and flat.
\end{lem}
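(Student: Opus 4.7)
My plan is to establish quasi-finiteness first, then deduce flatness from the surjectivity hypothesis, and finally prove finiteness via Zariski's main theorem combined with the structure theorem for quasi-finite morphisms over Henselian local rings.

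First, $\vphi$ is quasi-finite. Outside $B$, the map is unramified and of finite type, so each fiber is a finite-type unramified scheme over a field, hence a finite disjoint union of spectra of finite separable extensions. Over $B$, the fibers are subsets of the finite set $\vphi^{-1}(B)$. In both cases the fibers are finite.

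Next, $\vphi$ is flat. Since $\Spec k[t]$ is a Dedekind scheme, flatness is equivalent to torsion-freeness of $\mc{O}_X(X)$ as a $k[t]$-module. A nonzero polynomial $f\in k[t]$ would act as a zero divisor on $\mc{O}_X(X)$ only if $f$ vanished identically on some irreducible component of $X$; this is impossible since every component surjects onto, hence dominates, $\Spec k[t]$.

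For finiteness, I would localize at each closed point $y\in\Spec k[t]$ and show that the base change $X_R\to\Spec R$ is finite, where $R$ is the local ring at $y$; finiteness being local on the target, this suffices. Apply Zariski's main theorem to factor $\vphi_R$ as $X_R\xrightarrow{j} Z\xrightarrow{g}\Spec R$, with $j$ an open immersion (take $X_R$ dense in $Z$ by replacing $Z$ with the closure of $X_R$ if necessary) and $g$ finite. The complement $Z\setminus X_R$ is a proper closed subset of the $1$-dimensional scheme $Z$, hence a finite set of closed points; since every component of $X_R$ surjects onto $\Spec R$, the generic fibers of $X_R$ and $Z$ coincide, so $Z\setminus X_R$ lies above the closed point of $\Spec R$. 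To finish, pass to the strict Henselization $R^{\mathrm{sh}}$ and invoke the structure theorem for quasi-finite morphisms over Henselian local rings: $X_{R^{\mathrm{sh}}}=X^{\mathrm{fin}}\sqcup X^{\infty}$, where $X^{\mathrm{fin}}$ is finite over $R^{\mathrm{sh}}$ and $X^{\infty}$ does not meet the closed fiber. Since every irreducible component of $X_{R^{\mathrm{sh}}}$ meets the special fiber (tracing the surjectivity hypothesis through the Henselization), $X^{\infty}=\emptyset$, so $X_{R^{\mathrm{sh}}}$ is finite. By fpqc descent along the faithfully flat map $R\to R^{\mathrm{sh}}$, $X_R$ is finite over $R$.

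The main obstacle is the last step: verifying that the surjectivity of components of $X$ forces every component of $X_{R^{\mathrm{sh}}}$ to meet the closed point of $\Spec R^{\mathrm{sh}}$, so that the ``spread-out'' part $X^\infty$ in the structure theorem is empty. This requires tracing how irreducible components behave under base change to the Henselization, using that minimal primes of $\mc{O}_X(X)\otimes_R R^{\mathrm{sh}}$ lie over minimal primes of $\mc{O}_X(X)$ and that surjectivity is preserved under the faithfully flat map $\Spec R^{\mathrm{sh}}\to\Spec R$. Once this is in hand, the combined ZMT + Henselian structure theorem argument closes the proof.
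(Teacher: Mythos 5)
Your quasi-finiteness and flatness arguments are correct and agree with the paper's; the finiteness step is where you depart, replacing the paper's direct analysis (showing each irreducible component of $X\setminus\vphi^{-1}(B)$ maps isomorphically onto $\mb{A}^1_k\setminus B$, so that $X$ is a disjoint union of copies of $\mb{A}^1_k$) with localization on the base, Zariski's main theorem, and the structure theorem for quasi-finite schemes over a Henselian local ring.

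The obstacle you flag at the end is a genuine gap, and the sketch you give for closing it cannot work. Surjectivity of an irreducible component of $X$ onto $\Spec k[t]$ does \emph{not} pass to the components of $X_{R^{\mathrm{sh}}}$: base change to the strict Henselization can split one irreducible component into several, not all of which meet the closed fiber. For a concrete witness, take $k=\mb{C}$, $B=\{0\}$, and $X=\Spec\mb{C}[y][\tfrac{1}{y-1}]$ with $\vphi$ given by $t\mapsto y^2$. Then $X$ is affine, irreducible, of finite type, surjects onto $\mb{A}^1_k$, is unramified away from $\vphi^{-1}(0)=\{y=0\}$, and $\vphi^{-1}(B)$ is a single point --- all hypotheses hold --- yet $\vphi$ is not finite, since $\tfrac{1}{y-1}$ is not integral over $\mb{C}[t]$. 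Localizing at $t=1$ and Henselizing, $X_{R^{\mathrm{sh}}}$ splits into two components; the one containing the missing point $y=1$ does not meet the closed fiber, so $X^\infty\neq\emptyset$. This shows the finiteness conclusion actually fails as stated, so no argument can close the gap without strengthening the hypotheses. (The paper's own proof hits the same wall: its claim that an irreducible $Z$ unramified over $\mb{A}^1_k\setminus B$ has singleton fibers is contradicted by connected nontrivial \'etale covers such as the one above.)
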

\begin{proof}
We will first show that $\vphi$ is flat. Write $X=\Spec{A}$ for some $k[t]$-module $A$. Since $k[t]$ is a Dedekind domain, it suffices to show that $A$ is torsion-free. Suppose $g\in k[t]$ is a non-zero element that annihilates some $a\in A$. Then for any irreducible component $Y\subseteq X$ on which $a$ does not vanish, we have $\vphi(Y)\subseteq\mb{V}(g)\subsetneq\Spec{k[t]}$. But this contradicts our assumption that each irreducible component of $X$ surjects onto $\Spec{k[t]}$, so we deduce that $\vphi$ is flat.

Next, we show that $\vphi$ has finite fibers. Since $\vphi$ is affine and finite type, $\vphi$ is quasi-finite if and only if it has finite fibers~\cite[\href{https://stacks.math.columbia.edu/tag/02NH}{Lemma 02NH}]{stacks}; the same is also true for the restriction of $\vphi$ to $\vphi':X-\vphi^{-1}(B)\to\mb{A}^1_k-B$. The map $\vphi'$ is unramified by assumption and is therefore locally quasi-finite by~\cite[\href{https://stacks.math.columbia.edu/tag/02V5}{Lemma 02V5}]{stacks}. Since $\vphi'$ is affine and hence quasi-compact \cite[\href{https://stacks.math.columbia.edu/tag/01S7}{Lemma 01S7}]{stacks}, it follows that $\vphi'$ is quasi-finite~\cite[\href{https://stacks.math.columbia.edu/tag/01TJ}{Lemma 01TJ}]{stacks}. Thus $\vphi'$ has finite fibers. The fibers of $\vphi$ above $B$ are finite by assumption, so $\vphi$ has finite fibers.

Finally, note that if $Z$ is an irreducible component of $X-\vphi^{-1}(B)$, then each fiber of $Z$ consists of a single point. Indeed, $Z$ is connected (being irreducible), so if some fiber of $Z$ consists of more than one point, then $Z$ consists of more than one sheet. But $Z\to\mb{A}^1_k-B$ is unramified, so these sheets must remain disjoint. This contradicts the assumption that $Z$ is irreducible. Thus $Z\to\mb{A}^1_k-B$ is injective, so this map is an isomorphism. It follows that the closure of $Z$ in $X$ is isomorphic to $\mb{A}^1_k$, so $X$ is a finite union of isomorphic copies of $\Spec{k[t]}$. As a result, $X$ is finite over $\Spec{k[t]}$.
\end{proof}

\begin{thm}[Familial local $\mb{A}^1$-degree]\label{thm:harder}
Let $f:\mb{A}^n_k\to\mb{A}^n_k$ such that each point of $f^{-1}(0)$ is isolated in the fiber. Let $F:\mb{A}^n_{k[t]}\to\mb{A}^n_{k[t]}$ be a morphism such that $\mb{V}(F)\to\Spec{k[t]}$ is flat and $F|_{t=0}=f$. Assume that $\mb{V}(F)$ is unramified away from a finite set $B\subset\mb{A}^1_k$. Then for any closed point $c\in\mb{A}^1_k-\{0\}$ and any connected component $Y\subseteq\mb{V}(F)$, the perturbation $\tilde{f}:=F|_{t=c}:\mb{A}^n_k\to\mb{A}^n_k$ of $f$ has a set of zeros $Z\subseteq\tilde{f}^{-1}(0)$ such that
\[\sum_{p\in Y_0}\deg^{\mb{A}^1}_p(f)=\sum_{q\in Z}\deg_q^{\mb{A}^1}(\tilde{f}).\]
\end{thm}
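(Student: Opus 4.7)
The plan is to restrict the family $\mathbb{V}(F) \to \Spec{k[t]}$ to the connected component $Y$, produce a single Scheja--Storch bilinear form on $\mathcal{O}_Y(Y)$ valued in $k[t]$, and then invoke Harder's theorem~\cite[Lemma 30]{KW19} to conclude that the class of this form in $\GW(k)$ is the same after specialization at $t=0$ or $t=c$. The decomposition of each specialized form into local contributions then gives the asserted identity.

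First, I would verify that $\vphi \colon Y \to \Spec{k[t]}$ is finite and flat by applying Lemma~\ref{lem:finiteness}. Affineness of $Y$ and unramifiedness away from $\vphi^{-1}(B)$ are inherited directly from $\mathbb{V}(F)$. Flatness of $\mathbb{V}(F) \to \Spec{k[t]}$ together with the hypothesis that $f^{-1}(0)$ consists of isolated points forces all fibers of $\mathbb{V}(F) \to \Spec{k[t]}$ to be zero-dimensional; since such fibers are of finite type over a field, they are finite sets, so $\vphi^{-1}(B)$ is finite. The subtler hypothesis, that each irreducible component of $Y$ surjects onto $\Spec{k[t]}$, follows because flat morphisms send associated points to associated points: the generic point of each component of $Y$ must map to the generic point of $\Spec{k[t]}$, and openness of flat morphisms of finite presentation then upgrades dense image to surjective image.

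Next, since $k[t]$ is a PID and $\vphi$ is finite and flat, $\mathcal{O}_Y(Y)$ is a free $k[t]$-module of finite rank. Writing $F = (F_1,\ldots,F_n)$, the sequence $(F_1,\ldots,F_n)$ cuts $Y$ out of an open affine of $\mathbb{A}^{n+1}_k$ as a relative complete intersection, and so forms a regular sequence in a suitable localization by the Cohen--Macaulay argument already used in Proposition~\ref{prop:X^p reg}. The Scheja--Storch construction~\cite[\S 3]{SS75} thus produces a non-degenerate symmetric bilinear form
\[
\beta_Y \colon \mathcal{O}_Y(Y) \times \mathcal{O}_Y(Y) \to k[t].
\]
Compatibility of the Scheja--Storch form with flat base change implies that for any closed point $c \in \Spec{k[t]}$, the specialization $\beta_Y \otimes_{k[t]} k(c)$ is the Scheja--Storch form on $\mathcal{O}_{Y_c}(Y_c)$, which decomposes as the orthogonal sum over $q \in Y_c$ of the local Scheja--Storch forms; by definition of the local $\mathbb{A}^1$-degree, this summand at $q$ is $\deg_q^{\mathbb{A}^1}(F|_{t=c})$. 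At $c=0$ this yields $\sum_{p \in Y_0} \deg_p^{\mathbb{A}^1}(f)$; at $t=c$ it yields $\sum_{q \in Z} \deg_q^{\mathbb{A}^1}(\tilde f)$.

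Finally, Harder's theorem~\cite[Lemma 30]{KW19} applied to the non-degenerate form $\beta_Y$ over $k[t]$ asserts that the natural map $\GW(k) \to \GW(k[t])$ is an isomorphism, and that evaluation at any $k$-rational point of $\Spec{k[t]}$ computes the inverse. Consequently $[\beta_Y|_{t=0}] = [\beta_Y|_{t=c}]$ in $\GW(k)$, which is exactly the identity we want. The main obstacle I expect is the bookkeeping for the specialization step: one must argue carefully that the global Scheja--Storch form over $k[t]$ pulls back to the orthogonal sum of the per-point local Scheja--Storch forms at each closed fiber, since these local forms are what the definition of $\deg_q^{\mathbb{A}^1}$ packages. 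A secondary technicality is the case when $c$ is not $k$-rational, in which case the specialization lands in $\GW(k(c))$ and must be pushed back down to $\GW(k)$ via $\Tr_{k(c)/k}$; this is consistent with the convention that $\sum_{q \in Z} \deg_q^{\mathbb{A}^1}(\tilde f)$ is interpreted in $\GW(k)$ using the residue field traces at each $q$.
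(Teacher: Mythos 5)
Your proposal follows the paper's argument essentially step for step: localize to the connected component $Y$, apply Lemma~\ref{lem:finiteness} to see $\mc{O}_Y(Y)$ is finite and flat over $k[t]$, form the Scheja--Storch pairing over $k[t]$, identify its specializations at $t=0$ and $t=c$ with the two sums of local degrees, and conclude via Harder's theorem (the paper cites~\cite[Lemma 4.7 and Theorem 5.1]{BMP21} for the decomposition of the specialized pairing into local forms, where you invoke compatibility of Scheja--Storch with base change --- the same content).

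One step, however, does not go through as written. You argue that each irreducible component of $Y$ surjects onto $\Spec k[t]$ by combining ``generic point maps to generic point'' (which flatness does give) with openness of flat finite-presentation morphisms, concluding that a dense open image must be all of $\mb{A}^1_k$. That last inference is false: a dense open subset of $\mb{A}^1_k$ can be the complement of a finite set of closed points. The morphism $\mb{V}(tx-1)\to\Spec k[t]$ is flat, finite type, and unramified away from $\{0\}$, yet its image is $\mb{A}^1_k - \{0\}$, and indeed it is not finite over $k[t]$. So the surjectivity hypothesis of Lemma~\ref{lem:finiteness} is not an automatic consequence of the stated hypotheses of Theorem~\ref{thm:harder}. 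To be fair, the paper also merely asserts ``the conditions of Lemma~\ref{lem:finiteness} hold for $Y$'' without justification, so this is a subtlety shared by both accounts rather than a deviation on your part; but the argument you supplied to fill that gap does not hold. Your closing remark about non-$k$-rational $c$ --- that the specialization at $t=c$ lands in $\GW(k(c))$ and must be compared via a transfer --- is a genuine concern and is likewise left implicit in the paper's proof.
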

\begin{proof}
We will construct a pair $(Q,\beta)$, where $Q$ is a finite locally free $k[t]$-module and $\beta$ is a non-degenerate symmetric bilinear form on $Q$, such that
\begin{enumerate}[(i)]
\item the isomorphism class of $\beta|_{t=0}$ is $\sum_{p\in Y_0}\deg^{\mb{A}^1}_p(f)$, and
\item the isomorphism class of $\beta|_{t=c}$ is $\sum_{q\in Z}\deg^{\mb{A}^1}_q(\tilde{f})$.
\end{enumerate}
Once we have constructed $(Q,\beta)$, the desired result will follow from~\cite[Lemma 30]{KW19}.

Since $\mb{V}(F)\to\Spec{k[t]}$ is finite type and $\Spec{k[t]}$ is Noetherian, we have that $\mb{V}(F)$ is Noetherian as well~\cite[\href{https://stacks.math.columbia.edu/tag/01T6}{Lemma 01T6}]{stacks}. In particular, $\mb{V}(F)$ has finitely many irreducible components. Let $P\subset k[t][x_1,\ldots,x_n]$ denote the (finite) set of prime ideals corresponding to the irreducible components comprising $Y$. Then $S=k[t][x_1,\ldots,x_n]-P$ is multiplicatively closed. Set $Q=\frac{S^{-1}(k[t][x_1,\ldots,x_n])}{(F_1,\ldots,F_n)}$. The localization $\Spec{Q}$ is the restriction of the vanishing locus $\mb{V}(F)$ to the connected component $Y$.

The conditions of Lemma~\ref{lem:finiteness} hold for $Y$, which implies that $Q$ is a finite $k[t]$-module. Since $\Spec{Q}\to\mb{V}(F)$ is flat by \cite[\href{https://stacks.math.columbia.edu/tag/00HT}{Lemma 00HT (1)}]{stacks} and $\mb{V}(F)\to\Spec{k[t]}$ is flat by assumption, \cite[\href{https://stacks.math.columbia.edu/tag/01U7}{Lemma 01U7}]{stacks} implies that $Q$ is a flat $k[t]$-module. Since $k[t]$ is Noetherian, $Q$ being a finite $k[t]$-module is equivalent to $Q$ being a finitely presented $k[t]$-module, so \cite[\href{https://stacks.math.columbia.edu/tag/00NX}{Lemma 00NX (1) and (7)}]{stacks} implies that $Q$ is a finite locally free $k[t]$-module. (In fact, $Q$ is projective over a PID, so $Q$ is even a free $k[t]$-module.) 

We thus have the desired $Q$. We define $\beta$ to be the Scheja--Storch form on $Q$ associated to the sequence $(F_1,\ldots,F_n)$. Then $Q_0$ and $Q_c$ each have finite $k$-dimension. This implies that $Q_0$ and $Q_c$ are Artinian rings, each having finitely many maximal ideals. The maximal ideals of $Q_0$ correspond to the points of $Y_0$ (which are the zeros of $f$ contained in $Y$), while the maximal ideals of $Q_c$ are a subset of the zeros of $\tilde{f}$. Let $Z\subset\mb{A}^n_k$ be the set of points corresponding to the maximal ideals of $Q_c$. It follows from e.g.~\cite[Lemma 4.7 and Theorem 5.1]{BMP21} that $\beta|_{t=0}$ is isomorphic to $\sum_{p\in Y_0}\deg_p^{\mb{A}^1}(f)$ and $\beta|_{t=c}$ is isomorphic to $\sum_{q\in Z}\deg_q^{\mb{A}^1}(\tilde{f})$, which gives us (i) and (ii).
\end{proof}

\begin{cor}[Familial local $\mb{A}^1$-degree at one point]\label{cor:harder}
Assume the conventions of Theorem~\ref{thm:harder}. Assume moreover that $B=\{0\}$. Fix $p\in f^{-1}(0)$. Then for any closed point $c\in\mb{A}^1_k$, the perturbation $\tilde{f}:=F_{t=c}$ of $f$ has a set of zeros $Z\subseteq\tilde{f}^{-1}(0)$ such that
\[\deg_p^{\mb{A}^1}(f)=\sum_{q\in Z}\deg_q^{\mb{A}^1}(\tilde{f}).\]
\end{cor}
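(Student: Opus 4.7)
The plan is to deduce Corollary~\ref{cor:harder} from Theorem~\ref{thm:harder} applied to the connected component of $\mb{V}(F)$ containing $p$. The case $c = 0$ is immediate: then $\tilde{f} = f$ and the corollary holds trivially with $Z = \{p\}$. So I assume $c \neq 0$. Letting $Y$ be the connected component of $\mb{V}(F)$ containing $p$, Theorem~\ref{thm:harder} applied to $Y$ produces a set $Z \subseteq \tilde{f}^{-1}(0)$ with
\[\sum_{p' \in Y_0}\deg^{\mb{A}^1}_{p'}(f) = \sum_{q \in Z}\deg^{\mb{A}^1}_q(\tilde{f}),\]
so it suffices to show $Y_0 = \{p\}$.

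To establish this, I exploit the hypothesis $B = \{0\}$ to control the topology of $\mb{V}(F)$. Let $\vphi: \mb{V}(F) \to \Spec k[t]$ be the structure map, which is finite and flat by Lemma~\ref{lem:finiteness}. Away from $\vphi^{-1}(0)$, $\vphi$ is both flat and unramified, hence \'etale, hence smooth, so $\mb{V}(F)$ is regular there. Since regular local rings are integral domains, every $q \in \mb{V}(F)$ with $\vphi(q) \neq 0$ lies on a unique irreducible component, so two distinct irreducible components of $\mb{V}(F)$ can meet only above $t = 0$. Moreover, from the proof of Lemma~\ref{lem:finiteness}, each irreducible component of $\mb{V}(F) - \vphi^{-1}(0)$ maps isomorphically to $\mb{A}^1_k - \{0\}$; passing to closures in $\mb{V}(F)$ and using that a finite birational morphism onto a regular curve is an isomorphism, each irreducible component of $\mb{V}(F)$ is a copy of $\mb{A}^1_k$ meeting $\vphi^{-1}(0)$ in a single point.

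Combining these two observations, the connected components of $\mb{V}(F)$ are precisely the unions of irreducible components sharing a common special-fiber point, so each connected component is in bijection with a point of $\mb{V}(F)_0$. In particular, the connected component $Y$ containing $p$ satisfies $Y_0 = \{p\}$, which is exactly what is needed.

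The main obstacle is this structural picture for $\mb{V}(F)$: showing that, given the restricted ramification locus $B = \{0\}$, each connected component of $\mb{V}(F)$ is a \emph{bouquet} of irreducible components based at a single point of the special fiber. The rest is a bookkeeping application of Theorem~\ref{thm:harder}, and the $c = 0$ edge case reduces to a tautology.
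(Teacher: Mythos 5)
Your proof is correct and takes essentially the same route as the paper: apply Theorem~\ref{thm:harder} to the connected component $Y$ of $\mb{V}(F)$ through $p$ and verify $Y_0=\{p\}$ by observing that each irreducible component of $\mb{V}(F)$ is a copy of $\mb{A}^1_k$ meeting the special fiber in a single point, so connected components biject with points of $\vphi^{-1}(0)$. Your two additions --- handling the $c=0$ edge case explicitly (which Theorem~\ref{thm:harder}'s statement excludes) and justifying via regularity/étaleness that distinct components can only meet above $t=0$ --- are more careful than the paper's terser version but do not change the argument.
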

\begin{proof}
By assumption, $\mb{V}(F)-\vphi^{-1}(B)$ is a disjoint union of copies of the punctured affine line. The closure in $\mb{V}(F)$ of each irreducible component of $\mb{V}(F)-\vphi^{-1}(B)$ has a single point in the fiber above $t=0$, so the connected components are in bijection with the points in the fiber $\vphi^{-1}(B)$. Let $Y$ be the connected component of $\mb{V}(F)$ containing $p$. The previous discussion implies that $Y_0=\{p\}$, so the result follows from Theorem~\ref{thm:harder}.
\end{proof}

\begin{rem}
Similar to Theorem~\ref{thm:harder}, Kass and Wickelgren have used Harder's theorem to study the $\mb{A}^1$-degree in families~\cite{KW19,KW21}. In their work, they show (and utilize) that the sum of local $\mb{A}^1$-degrees over a given fiber is independent of the fiber chosen. Our approach describes how to remove other elements of the fiber over 0 in order to compute the local $\mb{A}^1$-degree at a subset of the fiber by working in families. Under the assumptions of Corollary~\ref{cor:harder}, this can be refined to compute the local $\mb{A}^1$-degree at a single point.
\end{rem}

As with the dynamic approach, Corollary~\ref{cor:harder} allows us to remove the transversality assumption in Theorem~\ref{thm:bezout}:

\begin{cor}
Assume the conventions of Corollary~\ref{cor:harder}. Assume moreover that away from $t=0$, each fiber $\mb{V}(F)_t$ is geometrically reduced. Then for any $c\in k^\times$, the perturbation $\tilde{f}:=F|_{t=c}:\mb{A}^n_k\to\mb{A}^n_k$ of $f$ has a set of zeros $Z\subseteq\tilde{f}^{-1}(0)$ such that
\[\deg^{\mb{A}^1}_p(f)=\sum_{q\in Z}\Tr_{k(q)/k}\langle\Vol(q)\rangle,\]
where $\Vol(q)=\Jac(\tilde{f})(q)$.
\end{cor}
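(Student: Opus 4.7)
The plan is to apply Corollary~\ref{cor:harder} to rewrite $\deg^{\mb{A}^1}_p(f)$ as a sum of local $\mb{A}^1$-degrees of the perturbation $\tilde{f}$ at a set $Z$ of nearby zeros, and then to replace each such local degree with a trace of intersection volumes by checking that the transversality and separability hypotheses needed for Proposition~\ref{prop:transverse} are now automatic at every point of $Z$.

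First, the conventions of Corollary~\ref{cor:harder} are in force by assumption, so that corollary directly provides a set $Z\subseteq\tilde{f}^{-1}(0)$ with
\[
\deg^{\mb{A}^1}_p(f)=\sum_{q\in Z}\deg^{\mb{A}^1}_q(\tilde{f}).
\]

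Next, I would use the new hypothesis that the fiber $\mb{V}(F)_c=\mb{V}(\tilde{f})$ is geometrically reduced to extract two local facts at each $q\in Z$. Because $q$ is isolated in $\tilde{f}^{-1}(0)$, the local ring $\mc{O}_{\mb{V}(\tilde{f}),q}$ is a zero-dimensional, hence Artinian, $k$-algebra; a finite Artinian $k$-algebra is geometrically reduced exactly when it is \'etale over $k$, i.e.\ a finite separable field extension of $k$. Consequently $\tilde{f}$ has a simple zero at $q$ (so the hypersurfaces $\mb{V}(\tilde{f}_i)$ meet transversely at $q$ and $\Vol(q)=\Jac(\tilde{f})(q)\in k(q)^\times$), and the residue field $k(q)/k$ is separable.

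With transversality and separability established at each $q\in Z$, Proposition~\ref{prop:transverse} --- applied to $\tilde{f}$ regarded as a section of the trivial bundle on $\mb{A}^n_k$, equivalently the transverse case of Theorem~\ref{thm:bezout} as invoked in the proof of Corollary~\ref{cor:bezout non-transverse} --- yields
\[
\deg^{\mb{A}^1}_q(\tilde{f})=\Tr_{k(q)/k}\langle\Vol(q)\rangle
\]
for every $q\in Z$. Summing over $q\in Z$ and combining with the equation from Corollary~\ref{cor:harder} gives the claim. The only step requiring any real care is the passage from geometric reducedness of the whole fiber $\mb{V}(F)_c$ to simplicity of each zero $q$ together with separability of $k(q)/k$; once that structural fact about Artinian $k$-algebras is recorded, the rest of the argument is a direct chain of citations.
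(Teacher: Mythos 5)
Your proof is correct and follows the same overall structure as the paper's: invoke Corollary~\ref{cor:harder} to decompose $\deg_p^{\mb{A}^1}(f)$ as a sum of $\deg_q^{\mb{A}^1}(\tilde{f})$ over $q\in Z$, then verify transversality and separability so that the transverse case (Proposition~\ref{prop:transverse}, equivalently \cite[Section 5.2]{McK21}) applies. The one divergence is in how separability of $k(q)/k$ is obtained. The paper uses the flat-and-unramified-away-from-$0$ hypothesis inherited from Corollary~\ref{cor:harder}, citing Stacks 02GU to conclude that $\mb{V}(\tilde{f})\to\Spec{k}$ is \'etale and then Stacks 02GL for separable residue fields; you instead extract separability and transversality simultaneously from the geometric reducedness of the fiber, via the observation that a finite local Artinian $k$-algebra is geometrically reduced if and only if it is a finite separable field extension. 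Your variant is marginally more economical and makes it clearer that the geometric-reducedness hypothesis is the decisive one at $t=c$. One small phrasing caveat: a finite Artinian $k$-algebra that is geometrically reduced is, in general, a finite \emph{product} of finite separable extensions; it is the locality of $\mc{O}_{\mb{V}(\tilde{f}),q}$ that forces a single field. You use this implicitly, but it is worth stating, since it is the step that simultaneously gives simplicity of the zero (intersection multiplicity $[k(q):k]$, hence $\Vol(q)\in k(q)^\times$) and separability.
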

\begin{proof}
By assumption, $\Spec{Q}_c$ is geometrically reduced, so the components of $\tilde{f}$ meet transversely at each $q\in Z$. Since $F$ is flat and unramified at $t=c$, we have that $\mb{V}(\tilde{f})\to\Spec{k(c)}=\Spec{k}$ is \'etale~\cite[\href{https://stacks.math.columbia.edu/tag/02GU}{Lemma 02GU (2) and (4)}]{stacks}. In particular, $k(q)/k$ is separable for all $q\in Z$~\cite[\href{https://stacks.math.columbia.edu/tag/02GL}{Lemma 02GL (1)}]{stacks}. It follows from~\cite[Section 5.2]{McK21} that $\deg_q^{\mb{A}^1}(\tilde{f})=\Tr_{k(q)/k}\langle\Vol(q)\rangle$. The desired result now follows directly from Corollary~\ref{cor:harder}.
\end{proof}

In summary, the intersection volume is a universal geometric interpretation of the local indices in $\mb{A}^1$-enumerative geometry. However, for most enumerative geometric problems, this interpretation is unsatisfactory --- for the circles of Apollonius, the intersection volume at a tangent circle $C(q)$ would tell us about the geometry of the cones $Q(p_i)$ (or planes $V(p_i)$), rather than about the geometry of the circles $C(p_i)$ (or points $p_i$) and the tangent circle $C(q)$. Question~\ref{ques:geometric interpretation} asks for a more intrinsic geometric interpretation of $\ind_q\sigma$.

\section{Local contributions for Apollonian circles}\label{sec:local contributions apollonius}
We now give a geometric interpretation of $\ind_q\sigma$ in terms of the geometry of the relevant circles by analyzing the intersection volume. We will assume that the cones $Q(p_i)$ intersect transversely, which happens whenever the circles $C(p_i)$ satisfy the criteria of Proposition~\ref{prop:transverse cones}. (For example, the centers of all three circles should not lie on a shared line.) In Section~\ref{sec:other invariants}, we will outline a conjectural approach to finding alternative, more parsimonious descriptions of $\ind_q\sigma$.

Given three circles $C(p_i)$ (or points $p_i$), the intersection volume $\Vol(q)$ at a circle $C(q)$ is defined in terms of the gradients of the cones $Q(p_i)$ (or planes $V(p_i)$) at $q$. We will assume that $C(p_i)$ and $C(q)$ are non-degenerate circles, so that their $c_0$ coordinate in $\mb{P}^3_k$ is non-zero. This allows us to work in the affine patch $\{c_0\neq 0\}\subset\mb{P}^3_k$, where the twisted covering map of \cite[Proposition 3.8]{McK21} is simply the standard covering map $\{c_0\neq 0\}\to\mb{A}^3_k$. The standard coordinates on $\{c_0\neq 0\}$ are $(\frac{c_1}{c_0},\frac{c_2}{c_0},\frac{c_3}{c_0})$, so the gradient used to calculate $\Vol(q)$ will be $\nabla=(\frac{\partial}{\partial c_1},\frac{\partial}{\partial c_2},\frac{\partial}{\partial c_3})$.

\begin{notn}\label{notn:u and v}
Let $z_i:=\pcoor{a_i:b_i:1}$ be the center of $C(p_i)$ (or the point $p_i$), and let $r_i^2$ be the radius squared of $C(p_i)$ (or 0 for the point $p_i$). Similarly, let $\gamma:=\pcoor{\alpha:\beta:1}$ be the center of the non-degenerate circle $C(q)$. Let $\rho^2$ be the radius squared of $C(q)$, which is 0 if $C(q)$ is simply the point $\pcoor{\alpha:\beta:1}$. Let $\tau_i:=\pcoor{s_i:t_i:1}\in C(p_i)\cap C(q)$ be the point at which $C(p_i)$ and $C(q)$ are tangent.

We will use the following vectors in $\mb{A}^2_k$ (see Figure~\ref{fig:vectors}):
\begin{align*}
\vv{\gamma z_i}&=(a_i-\alpha,b_i-\beta),\\
\vv{\gamma\tau_i}&=(s_i-\alpha,t_i-\beta),\\
\vv{\tau_i z_i}&=(a_i-s_i,b_i-t_i).
\end{align*}

\begin{figure}
    \centering
    \begin{subfigure}{.3\linewidth}
    \centering
    \begin{tikzpicture}
    \draw[color=red, very thick](0,0) circle (1);
    \draw[color=cyan, very thick] ({sqrt(.5)+sqrt((.5^2)/2)},{sqrt(.5)+sqrt((.5^2)/2)}) circle (.5);
    \draw[->,thick] (0,0) -- ({sqrt(.5)+sqrt((.5^2)/2)},{sqrt(.5)+sqrt((.5^2)/2)});
    \fill (0,0) circle[radius=1pt];
    \node at (0,-1/3) {$\vv{\gamma z_i}$};
    \node[text=red] at (-1.5,-1/2) {$C(q)$};
    \node[text=cyan] at ({sqrt(.5)+sqrt((.5^2)/2)+1},{sqrt(.5)+sqrt((.5^2)/2)-1/2}) {$C(p_i)$};
    \end{tikzpicture}
    \end{subfigure}
    \begin{subfigure}{.3\linewidth}
    \centering
    \begin{tikzpicture}
    \draw[color=red, very thick](0,0) circle (1);
    \draw[color=cyan, very thick] ({sqrt(.5)+sqrt((.5^2)/2)},{sqrt(.5)+sqrt((.5^2)/2)}) circle (.5);
    \draw[->,thick] (0,0) -- ({sqrt(.5)},{sqrt(.5)});
    \fill (0,0) circle[radius=1pt];
    \node at (0,-1/3) {$\vv{\gamma\tau_i}$};
    \node[text=red] at (-1.5,-1/2) {$C(q)$};
    \node[text=cyan] at ({sqrt(.5)+sqrt((.5^2)/2)+1},{sqrt(.5)+sqrt((.5^2)/2)-1/2}) {$C(p_i)$};
    \end{tikzpicture}
    \end{subfigure}
    \begin{subfigure}{.3\linewidth}
    \centering
    \begin{tikzpicture}
    \draw[color=red, very thick](0,0) circle (1);
    \draw[color=cyan, very thick] ({sqrt(.5)+sqrt((.5^2)/2)},{sqrt(.5)+sqrt((.5^2)/2)}) circle (.5);
    \draw[->,thick] ({sqrt(.5)},{sqrt(.5)}) -- ({sqrt(.5)+sqrt((.5^2)/2)},{sqrt(.5)+sqrt((.5^2)/2)});
    \fill ({sqrt(.5)},{sqrt(.5)}) circle[radius=1pt];
    \node at ({sqrt(.5)-1/3},{sqrt(.5)-1/3}) {$\vv{\tau_i z_i}$};
    \node[text=red] at (-1.5,-1/2) {$C(q)$};
    \node[text=cyan] at ({sqrt(.5)+sqrt((.5^2)/2)+1},{sqrt(.5)+sqrt((.5^2)/2)-1/2}) {$C(p_i)$};
    \end{tikzpicture}
    \end{subfigure}
    \caption{Externally tangent circles}
    \label{fig:vectors}
\end{figure}
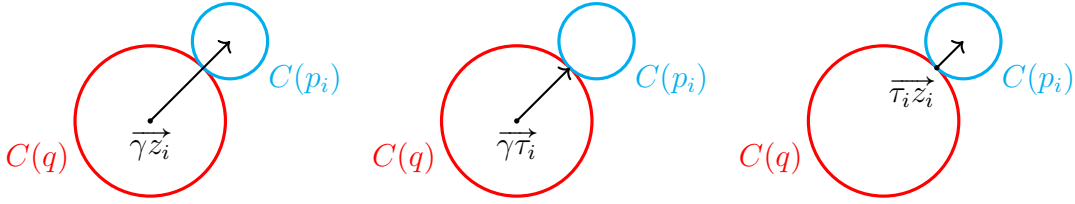

Finally, define
\begin{align*}
u_i&=\begin{cases}\vv{\tau_i z_i}\cdot\vv{\gamma z_i} & C(p_i)\text{ a circle},\\
1 & p_i\text{ a point}
\end{cases}\\
v_i&=\begin{cases}\vv{\tau_i z_i}\cdot\vv{\gamma \tau_i} & C(p_i)\text{ a circle},\\
1 & p_i\text{ a point}.
\end{cases}
\end{align*}
\end{notn}

\begin{rem}\label{rem:u_i and v_i}
If $k$ is an ordered field and $r_i^2,\rho^2>0$, then we can choose distinguished radii $r_i\in k(\sqrt{r_i^2})$ and $\rho\in k(\sqrt{\rho^2})$ such that $r_i,\rho>0$. Since the vectors $\vv{\gamma z_i}$, $\vv{\gamma\tau_i}$, and $\vv{\tau z_i}$ are all parallel or anti-parallel, the sign of the dot product of any two of these vectors indicates whether they are parallel or anti-parallel.

In this context, $v_i$ detects whether $C(p_i)$ and $C(q)$ are \textit{externally} tangent (as in Figure~\ref{fig:vectors}) or \textit{internally} tangent (as in Figures~\ref{fig:int tangent} and~\ref{fig:int tangent flip}). Moreover, if $C(p_i)$ and $C(q)$ are internally tangent, then $u_i$ detects whether $\rho>r_i$ (as in Figure~\ref{fig:int tangent}) or $r_i>\rho$ (as in Figure~\ref{fig:int tangent flip}). In particular:
\begin{itemize}
\item $C(p_i)$ and $C(q)$ are externally tangent if and only if $u_i,v_i>0$.
\item $C(p_i)$ and $C(q)$ are internally tangent with $\rho>r_i$ if and only if $u_i<0$ and $v_i<0$.
\item $C(p_i)$ and $C(q)$ are internally tangent with $r_i>\rho$ if and only if $u_i>0$ and $v_i<0$.
\end{itemize}
\end{rem}

\begin{figure}
    \centering
    \begin{subfigure}{.3\linewidth}
    \centering
    \begin{tikzpicture}
    \draw[color=red, very thick](0,0) circle (1);
    \draw[color=cyan, very thick] ({sqrt(.5)-sqrt((.75^2)/2)},{sqrt(.5)-sqrt((.75^2)/2)}) circle (.75);
    \draw[->,thick] (0,0) -- ({sqrt(.5)-sqrt((.75^2)/2)},{sqrt(.5)-sqrt((.75^2)/2)});
    \fill (0,0) circle[radius=1pt];
    \node at ({sqrt(.5)-sqrt((.75^2)/2)},-1/3) {$\vv{\gamma z_i}$};
    \node[text=red] at (-1.5,-1/2) {$C(q)$};
    \node[text=cyan] at (1.5,1/2) {$C(p_i)$};
    \end{tikzpicture}
    \end{subfigure}
    \begin{subfigure}{.3\linewidth}
    \centering
    \begin{tikzpicture}
    \draw[color=red, very thick](0,0) circle (1);
    \draw[color=cyan, very thick] ({sqrt(.5)-sqrt((.75^2)/2)},{sqrt(.5)-sqrt((.75^2)/2)}) circle (.75);
    \draw[->,thick] (0,0) -- ({sqrt(.5)},{sqrt(.5)});
    \fill (0,0) circle[radius=1pt];
    \node at ({sqrt(.5)-sqrt((.75^2)/2)},-1/3) {$\vv{\gamma\tau_i}$};
    \node[text=red] at (-1.5,-1/2) {$C(q)$};
    \node[text=cyan] at (1.5,1/2) {$C(p_i)$};
    \end{tikzpicture}
    \end{subfigure}
    \begin{subfigure}{.3\linewidth}
    \centering
    \begin{tikzpicture}
    \draw[color=red, very thick](0,0) circle (1);
    \draw[color=cyan, very thick] ({sqrt(.5)-sqrt((.75^2)/2)},{sqrt(.5)-sqrt((.75^2)/2)}) circle (.75);
    \draw[->,thick] ({sqrt(.5)},{sqrt(.5)}) -- ({sqrt(.5)-sqrt((.75^2)/2)},{sqrt(.5)-sqrt((.75^2)/2)});
    \fill ({sqrt(.5)},{sqrt(.5)}) circle[radius=1pt];
    \node at ({sqrt(.5)-sqrt((.75^2)/2)},-1/5) {$\vv{\tau_i z_i}$};
    \node[text=red] at (-1.5,-1/2) {$C(q)$};
    \node[text=cyan] at (1.5,1/2) {$C(p_i)$};
    \end{tikzpicture}
    \end{subfigure}
    \caption{Internally tangent circles}
    \label{fig:int tangent}
\end{figure}
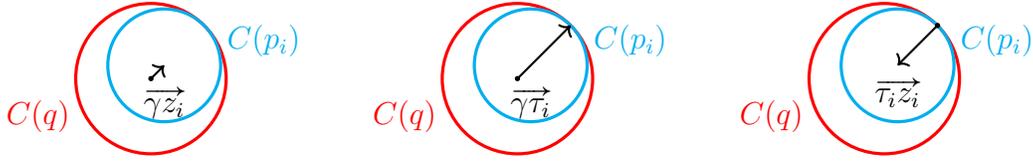

\begin{figure}
    \centering
    \begin{subfigure}{.3\linewidth}
    \centering
    \begin{tikzpicture}
    \draw[color=cyan, very thick](0,0) circle (1);
    \draw[color=red, very thick] ({sqrt(.5)-sqrt((.75^2)/2)},{sqrt(.5)-sqrt((.75^2)/2)}) circle (.75);
    \draw[->,thick] ({sqrt(.5)-sqrt((.75^2)/2)},{sqrt(.5)-sqrt((.75^2)/2)}) -- (0,0);
    \fill ({sqrt(.5)-sqrt((.75^2)/2)},{sqrt(.5)-sqrt((.75^2)/2)}) circle[radius=1pt];
    \node at ({sqrt(.5)-sqrt((.75^2)/2)},-1/3) {$\vv{\gamma z_i}$};
    \node[text=cyan] at (-1.5,-1/2) {$C(p_i)$};
    \node[text=red] at (1.5,1/2) {$C(q)$};
    \end{tikzpicture}
    \end{subfigure}
    \begin{subfigure}{.3\linewidth}
    \centering
    \begin{tikzpicture}
    \draw[color=cyan, very thick](0,0) circle (1);
    \draw[color=red, very thick] ({sqrt(.5)-sqrt((.75^2)/2)},{sqrt(.5)-sqrt((.75^2)/2)}) circle (.75);
    \draw[->,thick] ({sqrt(.5)-sqrt((.75^2)/2)},{sqrt(.5)-sqrt((.75^2)/2)}) -- ({sqrt(.5)},{sqrt(.5)});
    \fill ({sqrt(.5)-sqrt((.75^2)/2)},{sqrt(.5)-sqrt((.75^2)/2)}) circle[radius=1pt];
    \node at ({sqrt(.5)-sqrt((.75^2)/2)},-1/5) {$\vv{\gamma\tau_i}$};
    \node[text=cyan] at (-1.5,-1/2) {$C(p_i)$};
    \node[text=red] at (1.5,1/2) {$C(q)$};
    \end{tikzpicture}
    \end{subfigure}
    \begin{subfigure}{.3\linewidth}
    \centering
    \begin{tikzpicture}
    \draw[color=cyan, very thick](0,0) circle (1);
    \draw[color=red, very thick] ({sqrt(.5)-sqrt((.75^2)/2)},{sqrt(.5)-sqrt((.75^2)/2)}) circle (.75);
    \draw[->,thick] ({sqrt(.5)},{sqrt(.5)}) -- (0,0);
    \fill ({sqrt(.5)},{sqrt(.5)}) circle[radius=1pt];
    \node at ({sqrt(.5)-sqrt((.75^2)/2)},-1/4) {$\vv{\tau_i z_i}$};
    \node[text=cyan] at (-1.5,-1/2) {$C(p_i)$};
    \node[text=red] at (1.5,1/2) {$C(q)$};
    \end{tikzpicture}
    \end{subfigure}
    \caption{Internally tangent circles with reversed containment}
    \label{fig:int tangent flip}
\end{figure}
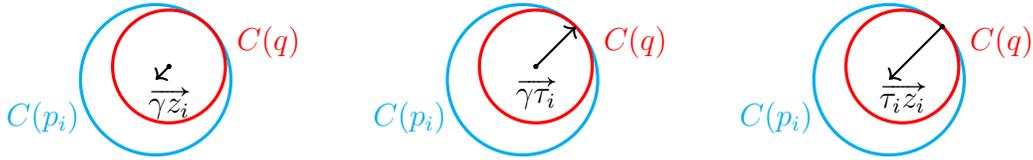

\begin{lem}\label{lem:geometric interpretation}
If $C(q)$ is tangent to the circles $C(p_i)$ (or points $p_i$), then the intersection volume is (up to squares)
\begin{align*}
\Vol(q)&=\sum_{\substack{\{i,m,n\}=\{1,2,3\}\\m<n}} (-1)^{i+1}u_iv_mv_n((a_m-\alpha)(b_n-\beta)-(a_n-\alpha)(b_m-\beta)).
\end{align*}
In other words, $\Vol(q)$ is a weighted sum of the signed areas of the parallelograms spanned by $\vv{\gamma z_m}$ and $\vv{\gamma z_n}$ (see Figure~\ref{fig:area}), where the weights are given in terms of the dot products $u_i$, $v_m$, and $v_n$.
\end{lem}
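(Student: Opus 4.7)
The plan is to compute $\Vol(q) = \det(\nabla F_1|_q, \nabla F_2|_q, \nabla F_3|_q)$ in the affine chart $\{c_0 \neq 0\} \subseteq \mb{P}^3_k$, where $F_i$ is the defining polynomial of $Q(p_i)$ from Lemma~\ref{lem:cone} (or of $V(p_i)$ from Lemma~\ref{lem:plane} when $p_i$ is a point). In the circle case, $F_i = \Lambda_i^2 - r_i^2(X_i^2 + Y_i^2)$, where $\Lambda_i = a_i X_i + b_i Y_i + Z_i$ and $(X_i, Y_i, Z_i) = (c_1 - p_{i,1}, c_2 - p_{i,2}, c_3 - p_{i,3})$. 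Differentiating and evaluating at $q$ (where $X_i|_q = 2(a_i-\alpha)$ and $Y_i|_q = 2(b_i-\beta)$ by Remark~\ref{rem:center and radius}) yields
\[
\nabla F_i|_q = \bigl(2\Lambda_i|_q\, a_i - 4r_i^2(a_i - \alpha),\ 2\Lambda_i|_q\, b_i - 4r_i^2(b_i - \beta),\ 2\Lambda_i|_q\bigr).
\]

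The crux is the identity $\Lambda_i|_q = 2u_i$. Expanding directly gives $\Lambda_i|_q = |\vv{\gamma z_i}|^2 - \rho^2 + r_i^2$. Decompose $\vv{\gamma z_i} = \vv{\gamma \tau_i} + \vv{\tau_i z_i}$; the fact that $\tau_i \in C(q)$ gives $|\vv{\gamma \tau_i}|^2 = \rho^2$, while $\tau_i \in C(p_i)$ gives $|\vv{\tau_i z_i}|^2 = r_i^2$, and the cross term is $2v_i$. Hence $|\vv{\gamma z_i}|^2 = \rho^2 + 2v_i + r_i^2$, so $\Lambda_i|_q = 2(v_i + r_i^2)$. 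Linearity of the dot product also yields $u_i - v_i = \vv{\tau_i z_i} \cdot (\vv{\gamma z_i} - \vv{\gamma \tau_i}) = |\vv{\tau_i z_i}|^2 = r_i^2$, so $\Lambda_i|_q = 2u_i$. Notably, no case split between internal and external tangency is required.

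Substituting $\Lambda_i|_q = 2u_i$ and $r_i^2 = u_i - v_i$ into the gradient produces $\nabla F_i|_q = 4(v_i a_i + r_i^2 \alpha,\ v_i b_i + r_i^2 \beta,\ u_i)$. Pulling the factor of $4$ out of each of the three rows contributes a global factor of $4^3 = 64$, which is a square and therefore trivial in $\GW(k)$. Now perform the column operations $\mathrm{col}_1 \mapsto \mathrm{col}_1 - \alpha \cdot \mathrm{col}_3$ and $\mathrm{col}_2 \mapsto \mathrm{col}_2 - \beta \cdot \mathrm{col}_3$: using $r_i^2 = u_i - v_i$, row $i$ becomes $(v_i(a_i - \alpha),\ v_i(b_i - \beta),\ u_i)$. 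Expanding along the third column yields exactly the alternating sum in Equation~\ref{eq:area}.

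For the mixed variants where some $p_i$ is a genuine point, Lemma~\ref{lem:plane} provides a linear defining polynomial with constant gradient $(a_i, b_i, 1)$, which agrees with the universal row $(v_i a_i + r_i^2 \alpha,\ v_i b_i + r_i^2 \beta,\ u_i)$ under the convention $r_i = 0$, $u_i = v_i = 1$ from Notation~\ref{notn:u and v}; the column operations and cofactor expansion then proceed identically. The main obstacle in the argument is isolating the simplifying identities $\Lambda_i|_q = 2u_i$ and $u_i - v_i = r_i^2$, both of which come from the tangency conditions on $\tau_i$; once these are in hand, the proof reduces to two column operations and a Laplace expansion.
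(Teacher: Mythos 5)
Your proof is correct and follows essentially the same route as the paper: compute $\nabla Q(p_i)|_q$ using Lemma~\ref{lem:cone} and Remark~\ref{rem:center and radius}, reduce each row to (a scalar times) $(v_i(a_i-\alpha), v_i(b_i-\beta), u_i)$ via the tangency relations, and expand the determinant along the third column. The only cosmetic difference is the ordering: the paper performs the column operations first and then substitutes $r_i^2 = (s_i-a_i)^2+(t_i-b_i)^2$, $\rho^2=(s_i-\alpha)^2+(t_i-\beta)^2$, whereas you isolate the identities $\Lambda_i|_q = 2u_i$ and $u_i - v_i = r_i^2$ up front via the decomposition $\vv{\gamma z_i}=\vv{\gamma\tau_i}+\vv{\tau_i z_i}$, which makes the simplification mechanism a bit more transparent but is the same underlying algebra.
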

\begin{proof}
If $r_i^2\neq 0$, we have
\begin{align*}
\nabla Q(p_i)=(&2a_i(a_iX+b_iY+Z)-2r_i^2X,\\
&2b_i(a_iX+b_iY+Z)-2r_i^2Y,\\
&2(a_iX+b_iY+Z)).
\end{align*}
Evaluated at $q$, we have $X=2(a_i-\alpha)$, $Y=2(b_i-\beta)$, and $Z=\alpha^2-a_i^2+\beta^2-b_i^2+r_i^2-\rho^2$. Thus, evaluated at $q$, we have
\begin{align*}
\nabla Q(p_i)|_q=(&2a_i((a_i-\alpha)^2+(b_i-\beta)^2+r_i^2-\rho^2)-4r_i^2(a_i-\alpha),\\
&2b_i((a_i-\alpha)^2+(b_i-\beta)^2+r_i^2-\rho^2)-4r_i^2(b_i-\beta),\\
&2((a_i-\alpha)^2+(b_i-\beta)^2+r_i^2-\rho^2)).
\end{align*}
If $p_i=\pcoor{a_i:b_i:1}$ is a point, then $\nabla V(p_i)=(a_i,b_i,1)$ is independent of the intersection point $q$. The intersection volume $\Vol(q)$ is the determinant of the matrix $M$ with rows $\nabla Q(p_i)|_q$ (or $\nabla V(p_i)$). Subtracting $\alpha$ times the third column from the first column of $M$ and $\beta$ times the third column from the second column of $M$, we find that $\Vol(q)$ is the determinant of the matrix with $i\textsuperscript{th}$ row
\begin{align}\label{eq:row}
(&2(a_i-\alpha)((a_i-\alpha)^2+(b_i-\beta)^2-r_i^2-\rho^2),\\
&2(b_i-\beta)((a_i-\alpha)^2+(b_i-\beta)^2-r_i^2-\rho^2),\nonumber\\
&2((a_i-\alpha)^2+(b_i-\beta)^2+r_i^2-\rho^2))\nonumber
\end{align}
if $C(p_i)$ is a circle or
\[(a_i-\alpha,b_i-\beta,1)=(v_i(a_i-\alpha),v_i(b_i-\beta),u_i)\]
if $p_i$ is a point. Since $\pcoor{s_i:t_i:1}\in C(p_i)\cap C(q)$, we have $r_i^2=(s_i-a_i)^2+(t_i-b_i)^2$ and $\rho^2=(s_i-\alpha)^2+(t_i-\beta)^2$. If $C(p_i)$ is a circle, we may thus substitute for $r_i^2$ and $\rho^2$ in Equation~\ref{eq:row} to obtain $4(v_i(a_i-\alpha),v_i(b_i-\beta),u_i)$. Ignoring the factor of 4 only changes $\Vol(q)$ up to squares, so
\[\Vol(q)=\det\begin{pmatrix}
v_1(a_1-\alpha) & v_1(b_1-\beta) & u_1\\
v_2(a_2-\alpha) & v_2(b_2-\beta) & u_2\\
v_3(a_3-\alpha) & v_3(b_3-\beta) & u_3
\end{pmatrix}\]
up to squares.
\end{proof}

\begin{figure}
    \centering
    \begin{tikzpicture}
    \draw[very thick](0,0) circle (1);
    \draw[color=cyan, very thick] (1.75,0) circle (.75);
    \draw[color=red, very thick] ({-sqrt(.5)-sqrt((.5^2)/2)},{sqrt(.5)+sqrt((.5^2)/2)}) circle (.5);
    \draw[color=black, opacity=0.15, fill=black, fill opacity=0.15] (0,0) -- (1.75,0) -- ({1.75-sqrt(.5)-sqrt((.5^2/2)},{sqrt(.5)+sqrt((.5^2)/2)}) -- ({-sqrt(.5)-sqrt((.5^2)/2)},{sqrt(.5)+sqrt((.5^2)/2)}) -- cycle;
    \draw[->,color=cyan, thick] (0,0) -- (1.75,0);
    \draw[->,color=red, thick] (0,0) -- ({-sqrt(.5)-sqrt((.5^2)/2)},{sqrt(.5)+sqrt((.5^2)/2)});
    \node[text=cyan] at (1/2,-1/3) {$\vv{\gamma z_n}$};
    \node[text=red] at (-1/2,-1/3) {$\vv{\gamma z_m}$};
    \node[text=red] at (-2,1/2) {$C(p_m)$};
    \node[text=cyan] at (3,-1/2) {$C(p_n)$};
    \end{tikzpicture}
    \caption{Parallelogram of tangent circles}
    \label{fig:area}
\end{figure}
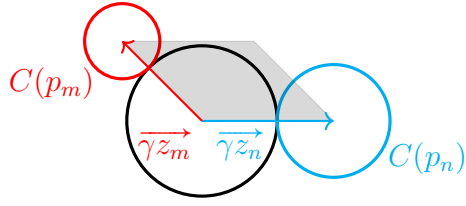

\section{Invariants from dual circles}\label{sec:other invariants}
Classically, the circles of Apollonius can be studied in pairs. One way to pair circles is via inversion~\cite[pp. 154--160]{Dor65}. Given three general circles $C_1,C_2,C_3$ (over $\mb{R}$), the \textit{radical circle} is the unique circle that intersects each $C_i$ perpendicularly. Inversion through the radical circle preserves tangency to each $C_i$ and hence determines a permutation on the set of circles tangent to $C_1,C_2,C_3$. One can then show that this permutation is in fact an involution. We say that two circles are \textit{inversively dual} or \textit{conjugate} if they correspond to one another under this involution.

Alternatively, one can degenerate $C_i$ to a point $p$. Under this degeneration, each tangent circle will be sent to one of the four circles through $p$ and tangent to the remaining two circles $C_j,C_k$. Moreover, the map (induced by degenerating $C_i$ to $p$) from the set of circles tangent to $C_1,C_2,C_3$ to the set of circles through $p$ and tangent to $C_j,C_k$ is two-to-one, so swapping the elements within each fiber determines an involution on the circles of Apollonius~\cite[pp. 117--121]{Joh60}. We say that two circles are \textit{degeneratively dual through $C_i$} if they coincide after degenerating $C_i$.

Over an ordered field, each of these involutions can be rephrased in terms of external and internal tangency (see Remark~\ref{rem:u_i and v_i}). Two tangent circles are degeneratively dual through $C_i$ if they have opposite tangencies with respect to $C_i$ and the same tangencies with respect to the remaining two circles $C_j,C_k$ (see Figure~\ref{fig:degen}). Two tangent circles are inversively dual if they have opposite tangencies with respect to $C_1,C_2,C_3$  (see Figure~\ref{fig:inversive}).

\begin{figure}
\begin{tikzpicture}
	\draw[very thick] (0,0) circle (.4);
    \draw[very thick] (1.05,.15) circle (.25);
    \draw[very thick] (.7,.8) circle (.1);
    \draw[color=red, very thick] (.117,.266) circle (.691);
    \draw[color=cyan, very thick] (.479,-.191) circle (.915);
\end{tikzpicture}
\caption{Degeneratively dual circles}\label{fig:degen}
\end{figure}
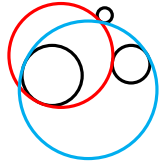

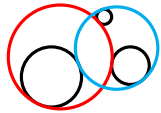
\begin{figure}
\begin{tikzpicture}
	\draw[very thick] (0,0) circle (.4);
    \draw[very thick] (1.05,.15) circle (.25);
    \draw[very thick] (.7,.8) circle (.1);
    \draw[color=red, very thick] (.117,.266) circle (.691);
    \draw[color=cyan, very thick] (.866,.384) circle (.548);
\end{tikzpicture}
\caption{Inversively dual circles}\label{fig:inversive}
\end{figure}

In this section, we will show that each of these involutions exist over any field of characteristic not 2. We will also give a generalization of external and internal tangency over such fields. Finally, we will describe a conjectural approach for generating new interpretations of $\ind_q\sigma$ via these involutions. We assume throughout this section that the cones $Q(p_i)$ meet transversely, so that the circles simultaneously tangent to $C(p_1),C(p_2),C(p_3)$ are geometrically distinct (that is, distinct over $\overline{k}$).

\subsection{Solving for tangent circles}
In order to define inversively and degeneratively dual circles over an arbitrary field $k$ with $\Char{k}\neq 2$, we would like an algebraic description of the set of circles of Apollonius. This description was provided by Coaklay~\cite{Coa60} and, independently, Stoll~\cite{Sto73}. We will describe these solutions, and we give a code implementation in~\cite{code}. We also note that while Coaklay only worked over $\mb{R}$, the solutions are in fact valid over any field in which the quadratic formula holds (i.e. $\Char{k}\neq 2$).

Let $k$ be a field with $\Char{k}\neq 2$. For $1\leq i\leq 3$, let $a_i,b_i,r_i^2\in k$ and $p_i=\pcoor{1:-2a_i:-2b_i:a_i^2+b_i^2-r_i^2}$, so that $C(p_i)\in\M$ is the $k$-rational circle with center $\pcoor{a_i:b_i:1}$ and radius squared $r_i^2$. Let $s=(s_1,s_2,s_3)\in\{1,-1\}^3$. We first define
\begin{align*}
\Delta&=\det\begin{pmatrix}a_2-a_1 & a_3-a_1\\ b_2-b_1 & b_3-b_1\end{pmatrix}\\
&=(a_1-a_2)(b_1-b_3)-(a_1-a_3)(b_1-b_2)
\end{align*}
and $D_{ij}=a_i^2-a_j^2+b_i^2-b_j^2-(r_i^2-r_j^2)$. 

\begin{rem}
Note that $\Delta\neq 0$ if and only if the the three centers $\pcoor{a_i:b_i:1}$ are not collinear.
\end{rem}

Next, let $r_i$ be a square root of $r_i^2$, and define
\begin{align}\label{eq:A,B,M,N}
A_1(s)&=\frac{(s_1r_1-s_2r_2)(b_1-b_3)-(s_1r_1-s_3r_3)(b_1-b_2)}{\Delta},\\
B_1(s)&=\frac{(s_1r_1-s_3r_3)(a_1-a_2)-(s_1r_1-s_2r_2)(a_1-a_3)}{\Delta},\nonumber\\
A_2(s)&=\frac{(b_1-b_3)D_{12}-(b_1-b_2)D_{13}}{2\Delta},\nonumber\\
B_2(s)&=\frac{(a_1-a_2)D_{13}-(a_1-a_3)D_{12}}{2\Delta},\nonumber\\
M(s)&=A_1(s)s_1r_1+A_2(s)-a_1,\nonumber\\
N(s)&=B_1(s)s_1r_1+B_2(s)-b_1.\nonumber
\end{align}
Finally, let
\begin{align}\label{eq:coaklay}
f_s(x)&=(1-A_1(s)^2-B_1(s)^2)(x-s_1r_1)^2\\
&-2(M(s)A_1(s)+N(s)B_1(s))(x-s_1r_1)\nonumber\\
&-M(s)^2-N(s)^2.\nonumber
\end{align}

\begin{rem}
If $k$ is an ordered field, we can specify that $r_i$ should be non-negative. In general, we cannot consistently choose a ``preferred'' square root of $r_i^2$. However, once we have picked $r_i$, the other square root $-r_i$ will be accounted for by negating $s_i$ in $f_s(x)$.
\end{rem}

\begin{thm}[Coaklay]\label{thm:coaklay}
The circle $C(\pcoor{1:-2\alpha_s:-2\beta_s:\alpha_s^2+\beta_s^2-\rho_s^2})$ is tangent to $C(p_1),C(p_2),C(p_3)$, where $\rho_s$ is a root of $f_s(x)$ and
\begin{align*}
\alpha_s&=A_1(s)\rho_s+A_2(s),\\
\beta_s&=B_1(s)\rho_s+B_2(s).
\end{align*}
Moreover, every circle tangent to $C(p_1),C(p_2),C(p_3)$ is obtained in this manner for some $s\in\{1,-1\}^3$.
\end{thm}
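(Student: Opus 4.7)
The plan is to translate the tangency conditions into a system of three quadratic equations in $(\alpha, \beta, \rho)$, exploit the fact that differences of these equations are linear to solve for $\alpha, \beta$ in terms of $\rho$, and then substitute back to obtain the claimed quadratic $f_s$.

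First I would encode tangency algebraically. Two circles with centers $(a_i, b_i), (\alpha, \beta)$ and radii squared $r_i^2, \rho^2$ are tangent if and only if the distance between centers equals the sum or difference of the radii. After fixing square roots $r_i \in \overline{k}$ of $r_i^2$, this is equivalent to the existence of $s_i \in \{1,-1\}$ with $(a_i - \alpha)^2 + (b_i - \beta)^2 = (s_i r_i - \rho)^2$, where $s_i$ records the ``type'' of tangency. A tangent circle $C$ corresponds to a solution $(\alpha, \beta, \rho)$ of the three such equations for some sign pattern $s = (s_1, s_2, s_3)$.

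Second, I would expand both sides and use $s_i^2 = 1$ to observe that subtracting equation $i$ from equation $1$ annihilates the quadratic monomials $\alpha^2, \beta^2, \rho^2$. The result is the pair of linear equations
\begin{align*}
2(a_1 - a_2)\alpha + 2(b_1 - b_2)\beta - 2(s_1 r_1 - s_2 r_2)\rho &= D_{12},\\
2(a_1 - a_3)\alpha + 2(b_1 - b_3)\beta - 2(s_1 r_1 - s_3 r_3)\rho &= D_{13}.
\end{align*}
Under the genericity assumption $\Delta \neq 0$ (equivalent to the centers being non-collinear), Cramer's rule solves this $2 \times 2$ system for $\alpha, \beta$ as affine functions of $\rho$, and one reads off that these are exactly $\alpha = A_1(s)\rho + A_2(s)$ and $\beta = B_1(s)\rho + B_2(s)$ as defined in Equation~\ref{eq:A,B,M,N}.

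Third, I would substitute these back into the $i=1$ tangency equation. Writing $\alpha - a_1 = A_1(s)(\rho - s_1 r_1) + M(s)$ and $\beta - b_1 = B_1(s)(\rho - s_1 r_1) + N(s)$, the equation $(\alpha-a_1)^2 + (\beta-b_1)^2 = (\rho - s_1 r_1)^2$ becomes, after setting $y = \rho - s_1 r_1$,
\[
(1 - A_1(s)^2 - B_1(s)^2) y^2 - 2(M(s)A_1(s) + N(s)B_1(s)) y - M(s)^2 - N(s)^2 = 0,
\]
which is precisely the condition $f_s(\rho) = 0$ from Equation~\ref{eq:coaklay}. This proves that any triple $(\alpha_s, \beta_s, \rho_s)$ produced by Coaklay's recipe satisfies the tangency equation with $C(p_1)$; combined with the two linear relations derived above, it satisfies all three tangency equations, yielding the forward direction.

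For the converse, given a circle tangent to all three $C(p_i)$, the three tangency relations determine a sign pattern $s \in \{1,-1\}^3$ (depending on our fixed choices of square roots $r_i$, with the ambiguity in choice of $\rho$ absorbed into an overall sign swap), and the derivation above shows the circle must arise from $f_s$ as stated. The main obstacle is simply bookkeeping: getting all signs consistent, verifying that the Cramer's-rule expressions match Equation~\ref{eq:A,B,M,N} verbatim, and noting that swapping the square root $r_i \mapsto -r_i$ corresponds to $s_i \mapsto -s_i$ and so does not produce new circles. These checks are mechanical but tedious, and are the sort of computation the referenced Sage code in~\cite{code} is well-suited to verify.
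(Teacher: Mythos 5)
The paper gives no proof of Theorem~\ref{thm:coaklay} at all --- it attributes the result to Coaklay and Stoll and refers to a code implementation for verification. Your proposal therefore supplies a derivation the paper itself omits, and it is the correct and essentially canonical one. A few remarks on the details.

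Your Step 2 arithmetic checks out: subtracting the $i$th squared tangency relation
\[
(a_i - \alpha)^2 + (b_i - \beta)^2 = (s_i r_i - \rho)^2
\]
from the first one cancels $\alpha^2, \beta^2, \rho^2$ and leaves exactly
\[
2(a_1 - a_i)\alpha + 2(b_1 - b_i)\beta - 2(s_1 r_1 - s_i r_i)\rho = D_{1i},
\]
and solving this $2\times 2$ system by Cramer's rule (legitimate since $\Delta\neq 0$ by the non-collinearity assumption) reproduces $A_1,A_2,B_1,B_2$ of Equation~\ref{eq:A,B,M,N} verbatim; I verified this term-by-term. Your Step 3 substitution $y = \rho - s_1 r_1$, $\alpha - a_1 = A_1(s) y + M(s)$, $\beta - b_1 = B_1(s) y + N(s)$ turns the first tangency relation into
\[
(1 - A_1(s)^2 - B_1(s)^2) y^2 - 2\bigl(M(s)A_1(s) + N(s)B_1(s)\bigr) y - M(s)^2 - N(s)^2 = 0,
\]
which is $f_s$ of Equation~\ref{eq:coaklay}. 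The forward implication is sound as you state it: the $\alpha_s,\beta_s$ produced by Cramer's rule automatically satisfy both linear relations, and $f_s(\rho_s)=0$ gives the first tangency equation, so the other two follow by adding back the linear differences. The converse is also correct once one fixes a square root $\rho$ of the tangent circle's radius squared, since each tangency then pins down a sign $s_i$; the overall ambiguity $\rho\mapsto -\rho$ is absorbed by $s\mapsto -s$ as you note (and one can check $f_{-s}(-y) = f_s(y)$, so the same circles arise). The only point I'd tighten is the opening sentence of your converse: the signs $s_i$ are determined only after fixing $\rho$, not by the tangency relations alone, but you effectively acknowledge this with the parenthetical. In sum, the proof is correct, standard, and more explicit than the paper, which defers entirely to the literature and to symbolic computation.
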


\begin{rem}
The roots of $f_s$ and $f_{-s}$ coincide, as do the sets $\{(\alpha_s,\beta_s):\rho_s\text{ a root of }f_s\}$ and $\{(\alpha_{-s},\beta_{-s}):\rho_{-s}\text{ a root of }f_{-s}\}$. In particular, we can recover all circles of Apollonius with the polynomials $f_{(1,\pm 1,\pm 1)}$.
\end{rem}

\begin{rem}\label{rem:coaklay distinct roots}
The polynomials $f_s$ have two distinct roots unless (i) the discriminant of $f_s$ vanishes, or (ii) $1-A_1(s)^2-B_1(s)^2=0$. Case (i) corresponds to having tangent circles of multiplicity 2 (compare with Proposition~\ref{prop:transverse cones}). Case (ii) corresponds to one of the tangent circles having infinite radius (i.e. lying on the plane $C(\pcoor{0:x_1:x_2:x_3})\subset\M$ at infinity), which occurs precisely when $C(p_1),C(p_2),C(p_3)$ share a common tangent line (which constitutes a component of the degenerate tangent circle). While we have largely ignored tangent circles of infinite radius to simplify our geometric considerations, such circles are still valid, algebraic solutions to the problem of Apollonius. It follows that Coaklay's equations yield distinct tangent circles unless two of $C(p_1),C(p_2),C(p_3)$ are tangent.
\end{rem}

Over $\mb{R}$, Coaklay remarks that the signs $s$ correspond to the tangency directions of the circles determined by $f_s$ relative to the $C(p_i)$. For example, one of the two circles determined by $f_{(1,1,-1)}$ will meet $C(p_1)$ and $C(p_2)$ externally and $C(p_3)$ internally, while the other circle will meet $C(p_1)$ and $C(p_2)$ internally and $C(p_3)$ externally. In particular, the two circles determined by $f_s$ are inversively dual over $\mb{R}$. Over more general fields, we will define two circles of Apollonius to be inversively dual if they are both determined by $f_s$.

\begin{defn}\label{def:inversively dual}
Let $k$ be a field with $\Char{k}\neq 2$. For each $(s_2,s_3)\in\{1,-1\}^2$, the circles tangent to $C(p_1),C(p_2),C(p_3)$ corresponding to the roots of $f_{(1,s_2,s_3)}$ are said to be \textit{inversively dual} to one another.
\end{defn}

\subsection{Dual circles via degeneration}
As we degenerate the circle $C(p_i)$ to a point, the circles of Apollonius carve out a family of circles. Using Coaklay's solutions, we will show that this family of circles of Apollonius is finite and flat. The key will be the quadratic formula over $k[t]$, which is well-defined due to our assumption that $\Char{k}\neq 2$.

In order to degenerate the circle $C(p_i)$ to a point, we want a family of squared radii that interpolate between $r_i^2$ and 0. We will use the family $t^2r_i^2$. We start by modifying Coaklay's equations to handle this degenerating family.

\begin{defn}\label{defn:coaklay family}
Let $t$ be an indeterminate over $k$. Define $A^i_1(s,t)$, $B^i_1(s,t)$, $M^i(s,t)$, and $N^i(s,t)$ by replacing $r_i$ with $tr_i$ in $A_1(s)$, $B_1(s)$, $M(s)$, and $N(s)$, respectively (see Equation~\ref{eq:A,B,M,N}). Let $R^i_1$ be $r_1$ if $i\neq 1$ and $tr_1$ if $i=1$.
\begin{align*}
f^i_{s,t}(x)&=(1-A^i_1(s,t)^2-B^i_1(s,t)^2)(x-s_1R^i_1)^2\\
&-2(M^i(s,t)A^i_1(s,t)+N^i(s,t)B^i_1(s,t))(x-s_1R^i_1)\\
&-M^i(s,t)^2-N^i(s,t)^2.
\end{align*}
Given a root $\rho^i_{s,t}$ of $f^i_{s,t}(x)$ (which we can solve for using the quadratic formula for polynomials over $k[t]$ with $\Char{k}\neq 2$), set
\begin{align*}
\alpha^i_{s,t}&=A^i_1(s,t)\rho^i_{s,t}+A_2(s),\\
\beta^i_{s,t}&=B^i_1(s,t)\rho^i_{s,t}+B_2(s).
\end{align*}
\end{defn}

By construction, the equations given in Definition~\ref{defn:coaklay family} give a parameterization of the family of circles tangent to $C(p_1),C(p_2),C(p_3)$ as $C(p_i)$ degenerates to a point. Geometrically, this family of circles arises from the family of intersections of the cones $Q(p_1),Q(p_2),Q(p_3)$ as $Q(p_i)$ degenerates to a double plane:

\begin{defn}
For $1\leq j\leq 3$, let
\begin{align*}
Q^i_t(p_j)=\begin{cases}
\mb{V}((a_iX_i+b_iY_i+Z'_i)^2-t^2r_i^2(X_i^2+Y_i^2)) & j=i,\\
\mb{V}((a_jX_j+b_jY_j+Z_j)^2-r_j^2(X_j^2+Y_j^2)) & j\neq i,
\end{cases}
\end{align*}
where $X_\ell=c_1+2a_\ell c_0$, $Y_\ell=c_2+2b_\ell c_0$, $Z_\ell=c_3+(r_\ell^2-a_\ell^2-b_\ell^2)c_0$, and $Z'_\ell=c_3+(t^2r_\ell^2-a_\ell^2-b_\ell^2)c_0$ (see Lemma~\ref{lem:cone}). By construction, we have $Q^i_t(p_j)=Q(p_j)$ for $i\neq j$. We also have $Q^i_1(p_i)=Q(p_i)$ and $Q^i_0(p_i)_\text{red}=V(p_i)$ (see Remark~\ref{rem:doubled}).
\end{defn}

As $C(p_i)$ degenerates, the family of circles of Apollonius is given by $\mc{A}^i:=\bigcap_{j=1}^3 Q^i_t(p_j)\to\Spec{k[t]}$. Ideally, we would like to define degenerative duality geometrically in terms of the fiber $\mc{A}^i_1$. Unfortunately, since $\mc{A}^i$ branches at multiple points along $\mb{A}^1_k$ (see Lemma~\ref{lem:ramification}), we cannot effectively keep track of which points in the fiber $\mc{A}^i_1$ coincide as we pass to $\mc{A}^i_0$. Instead, we will apply Coaklay's equations over $k[t]$ (Definition~\ref{defn:coaklay family}) to define degenerative duality, which will allow us to distinguish between the fibers of $\mc{A}^i$ as we pass through the ramification locus. We give a code implementation of degenerative duality in Appendix~\ref{sec:degen code}.

\begin{defn}\label{defn:degenerative}
Let $s=(1,s_1,s_2)$ and $s'=(1,s_1',s_2')$. Given roots $\rho^i_{s,t}$ and $\rho^i_{s',t}$ of $f^i_{s,t}$ and $f^i_{s',t}$, respectively, let
\begin{align*}
\alpha^i_{s,t}&=A^i_1(s,t)\rho^i_{s,t}+A_2(s),\\
\alpha^i_{s',t}&=A^i_1(s',t)\rho^i_{s',t}+A_2(s'),\\
\beta^i_{s,t}&=B^i_1(s,t)\rho^i_{s,t}+B_2(s),\\
\beta^i_{s',t}&=B^i_1(s',t)\rho^i_{s',t}+B_2(s'),\\
C_t&=C(\pcoor{1:-2\alpha^i_{s,t}:-2\beta^i_{s,t}:(\alpha^i_{s,t})^2+(\beta^i_{s,t})^2-(\rho^i_{s,t})^2}),\\
C_t'&=C(\pcoor{1:-2\alpha^i_{s',t}:-2\beta^i_{s',t}:(\alpha^i_{s',t})^2+(\beta^i_{s',t})^2-(\rho^i_{s',t})^2}).
\end{align*}
We say that two tangent circles $C(q),C(q')$ are \textit{degeneratively dual through $C(p_i)$} if there exist $s$ and $s'$ such that there is a root $\rho^i_{s,t}$ of $f^i_{s,t}$ and a root $\rho^i_{s',t}$ of $f^i_{s',t}$ satisfying $C_1=C(q)$, $C_1'=C(q')$, and $C_0=C_0'$. We denote $C(q)$ that is degeneratively dual to $C(q')$ through $C(p_i)$ by writing $\vartheta_i C(q)=C(q')$ or simply $\vartheta_i q=q'$.
\end{defn}

While the geometric picture of degenerative duality is clear, its algebraic analog in Definition~\ref{defn:degenerative} is somewhat unwieldy. We elucidate the structure of these degenerative dualities by representing each $\vartheta_i$ as a permutation matrix acting on the set $\mc{A}$ of circles of Apollonius.

\begin{prop}\label{prop:degen matrices}
Let $\bm{e}_n$ be the $n\textsuperscript{th}$ standard basis vector in $\mb{F}_2^8$. Represent the circles of Apollonius by the set $\{\bm{e}_n\}_{n=1}^8$, subject to the requirement that the subsets $\{\bm{e}_n,\bm{e}_{n+1}\}$ each correspond to the inversively dual pair of circles determined by $f_s$, where $s$ and $n$ are as in Table~\ref{table:signs}.
\begin{table}[h]
\caption{Signs and corresponding vectors}\label{table:signs}
\renewcommand{\arraystretch}{1.6}
\aboverulesep=0ex
\belowrulesep=0ex
\centering
\begin{tabular}{l|c}
\toprule
\multicolumn{1}{c}{$s$} & \phantom{m}$n$\phantom{m}\\
\toprule
$(1,1,1)$ & 1\\
$(1,1,-1)$ & 3\\
$(1,-1,1)$ & 5\\
$(1,-1,-1)$ & 7\\
\bottomrule
\end{tabular}
\end{table}
Then the action of $\vartheta_i$ on $\mc{A}$ is given by one of
\begin{align*}
\begin{psmallmatrix}
&&&&&&& 1\\
&&&&&& 1 &\\
&&&&& 1 &&\\
&&&& 1 &&&\\
&&& 1 &&&&\\
&& 1 &&&&&\\
& 1 &&&&&&\\
1 &&&&&&&\\
\end{psmallmatrix},\quad
\begin{psmallmatrix}
&&&& 1 &&&\\
&&&&& 1 &&\\
&&&&&& 1 &\\
&&&&&&& 1\\
1 &&&&&&&\\
& 1 &&&&&&\\
&& 1 &&&&&\\
&&& 1 &&&&\\
\end{psmallmatrix},\quad
\begin{psmallmatrix}
&& 1 &&&&&\\
&&& 1 &&&&\\
1 &&&&&&&\\
& 1 &&&&&&\\
&&&&&& 1 &\\
&&&&&&& 1\\
&&&& 1 &&&\\
&&&&& 1 &&\\
\end{psmallmatrix}.
\end{align*}
Moreover, the matrix representations of $\vartheta_i$ and $\vartheta_j$ are distinct for $i\neq j$.
\end{prop}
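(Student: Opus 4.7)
The plan is to combine two key observations: (i) the substitution $r_i \mapsto tr_i$ in Definition~\ref{defn:coaklay family} causes the sign $s_i$ to drop out of $f^i_{s,t}$ at $t=0$, and (ii) the symmetry $f_{-s} \equiv f_s$ (same tangent circles, different polynomials in general) allows us to renormalize to the convention $s_1 = 1$ used in Table~\ref{table:signs}. These two facts together pin down the block structure of each permutation $\vartheta_i$; the within-pair matching and the distinctness claim then follow, respectively, from explicit root-tracking (relegated to Appendix~\ref{sec:degen code}) and direct inspection.

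First, I would note that in Definition~\ref{defn:coaklay family}, the sign $s_i$ appears only multiplied by $r_i$ inside $A^i_1$ and $B^i_1$, while the shift $s_1 R^1_1$ vanishes at $t=0$ when $i=1$; meanwhile $M^i, N^i$ each reduce to $A_2 - a_1$ or $B_2 - b_1$ at $t=0$. Thus $f^i_{(1, s_2, s_3), 0}$ is independent of $s_i$. For $i = 2, 3$, this means $f^i_{(1, s_2, s_3), 0}$ and $f^i_{(1, s_2', s_3'), 0}$ are literally the same polynomial whenever $(s_2, s_3)$ and $(s_2', s_3')$ agree outside the $i$-th coordinate. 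For $i = 1$, $s_1$ drops out but the sign flip $(s_2, s_3) \mapsto (-s_2, -s_3)$ transforms $f^1_{(1, s_2, s_3), 0}$ by $x \mapsto -x$; the resulting pair of circles is nevertheless identical, since the induced sign change on $A^1_1, B^1_1$ cancels against the sign change on the roots in the formulas for $\alpha$ and $\beta$, while $\rho^2$ is invariant under negation. In all three cases, the coarse structure of $\vartheta_i$ is thereby determined: it pairs $(1, s_2, s_3)$ with $(1, -s_2, -s_3)$ for $i = 1$, with $(1, -s_2, s_3)$ for $i = 2$, and with $(1, s_2, -s_3)$ for $i = 3$. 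Reading off the index correspondence from Table~\ref{table:signs} gives precisely the block structures of the first, second, and third matrices in the statement.

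Second, within each block, the matching of individual basis vectors requires tracking which branch of the quadratic formula for $f^i_{s, t}$ deforms continuously into which branch of the partner polynomial $f^i_{s', t}$ as $t$ moves from $1$ to $0$. The two branches $\rho^i_{s, t, \pm}$ at $t=1$ fix the labels $\bm{e}_{2n-1}$ versus $\bm{e}_{2n}$ inside the inversively dual pair indexed by $n$, and the matching at $t=0$ is a direct symbolic computation with the explicit formulas of Definition~\ref{defn:coaklay family}. This computation is automated in Appendix~\ref{sec:degen code}, and produces the exact entries stated in each matrix.

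Finally, distinctness of $\vartheta_i$ and $\vartheta_j$ for $i \neq j$ is immediate from the coarse structure alone: the three permutations induce three distinct pairings of the four inversively dual pairs of circles, so no two of the matrices can coincide regardless of the within-pair matching. The main obstacle to a fully hand-checked proof is the within-pair matching, which depends delicately on making consistent square root branch choices across all of the polynomials $f^i_{s,t}$ simultaneously; this is the step for which we rely on the computer algebra implementation.
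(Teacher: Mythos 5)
Your proposal is correct, and it shares the paper's ultimate reliance on the symbolic computation of Appendix~\ref{sec:degen code} for the within-pair matching, but it adds a genuinely useful by-hand derivation of the block structure that the paper does not supply. The paper's proof consists entirely of invoking the Sage code, which computes the full matrices from Definition~\ref{defn:coaklay family} by brute force. Your proof instead first establishes analytically, by tracking how $s_i$ appears only multiplied by $r_i$ (so that the substitution $r_i \mapsto t r_i$ kills all $s_i$-dependence at $t=0$), that $\vartheta_1, \vartheta_2, \vartheta_3$ must respect the block partition $\{\bm{e}_1,\bm{e}_2\},\ldots,\{\bm{e}_7,\bm{e}_8\}$ and must pair blocks according to $(1,s_2,s_3) \mapsto (1,-s_2,-s_3), (1,-s_2,s_3), (1,s_2,-s_3)$ respectively. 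The $i=1$ case is handled correctly: you rightly observe that negating $(s_2,s_3)$ sends $f^1_{s,0}$ to $f^1_{s,0}(-x)$ (since $M^1(s,0)=A_2-a_1$ and $N^1(s,0)=B_2-b_1$ are $s$-independent while $A^1_1$ and $B^1_1$ flip sign), and that the resulting sign change on the roots cancels against the sign change on $A^1_1, B^1_1$ in the formulas for $\alpha$ and $\beta$, so the circles coincide. This reduces what the computer must verify from the full $8\times 8$ matrices to a binary choice within each of the four block pairs, and it also makes the final distinctness claim immediate (the three block pairings are visibly distinct, regardless of within-pair matching). In short: same computational backbone as the paper, but with a correct conceptual overlay that explains why the answer has the shape it does.
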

\begin{proof}
The proof is given via symbolic computation in Appendix~\ref{sec:degen code}. The code given in this appendix takes indeterminates representing the centers and (choices of) radii of three circles. For $i=1,2,3$, the code then applies the equations of Definition~\ref{defn:coaklay family} to compute whether two circles are degeneratively dual through $C(p_i)$. The $(m,n)\textsuperscript{th}$ entry of the matrix for $\vartheta_i$ consists of a 1 if $\bm{e}_m$ and $\bm{e}_n$ are degeneratively dual through $C(p_i)$ and a 0 otherwise.
\end{proof}

In the following lemma, we investigate the ramification locus of $\mc{A}^i\to\Spec{k[t]}$.

\begin{lem}\label{lem:ramification}
Let $\mc{A}^i:=\bigcap_{j=1}^3 Q^i_t(p_j)\to\Spec{k[t]}$ be the family of circles of Apollonius as the radius of $C(p_i)$ degenerates. Assume that no two circles among $C(p_1),C(p_2),C(p_3)$ are tangent. Then $\mc{A}^i\to\Spec{k[t]}$ is unramified away from a finite set of closed points. Moreover, $\mc{A}^i\to\Spec{k[t]}$ branches at some $t\neq 0$ if and only if a circle in the fiber $\mc{A}^i_t$ is inversively dual to itself.
\end{lem}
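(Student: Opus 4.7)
The plan is to pass to Coaklay's parameterization over $k[t]$ (Definition~\ref{defn:coaklay family}) and translate ramification of $\mathcal{A}^i\to\Spec k[t]$ into vanishing of the discriminants of the four polynomials $f^i_{s,t}$ for $s\in\{(1,\pm 1,\pm 1)\}$. By Theorem~\ref{thm:coaklay} applied over $k[t]$, these four quadratics jointly parameterize all tangent circles in $\mathcal{A}^i$, grouping them into inversively dual pairs (Definition~\ref{def:inversively dual}).

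First I would establish that $\mathcal{A}^i\to\Spec k[t]$ is finite and flat of generic degree $8$. Flatness follows from $\mathcal{A}^i$ being a complete intersection of three quadrics in $\mathbb{P}^3_{k[t]}$ of the expected relative dimension $0$; the generic degree is obtained from Proposition~\ref{prop:transverse cones}, which under our non-tangency hypothesis on $C(p_1),C(p_2),C(p_3)$ guarantees that the fiber at $t=1$ consists of eight geometrically distinct reduced points. Hence the ramification locus is a proper closed subscheme of $\mathbb{A}^1_k$, i.e.\ a finite set of closed points.

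Next I would analyze which collisions of tangent circles can occur in a fiber over $t_0\neq 0$. Any tangent circle $C(q)\in\mathcal{A}^i_{t_0}(\bar k)$ with center $(\alpha,\beta)$ and radius squared $\rho^2\neq 0$ satisfies
\[(\alpha-a_j)^2+(\beta-b_j)^2-\rho^2-R_j^2=2\epsilon_j\rho R_j,\qquad j=1,2,3,\]
where $R_j=r_j$ for $j\neq i$ and $R_i=t_0 r_i$, and $\epsilon_j\in\{\pm 1\}$ records the ``direction of tangency'' of $C(q)$ with $C^i_{t_0}(p_j)$. Because $\rho R_j\neq 0$, the sign vector $\epsilon=(\epsilon_1,\epsilon_2,\epsilon_3)$ is determined by $C(q)$ up to the global flip $\epsilon\mapsto -\epsilon$ coming from the two choices of square root of $\rho^2$. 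Consequently $C(q)$ appears as a root of exactly one of the four $f^i_{s,t_0}$ (modulo $s\mapsto -s$), so two distinct $\bar k$-points of $\mathcal{A}^i_{t_0}$ that coincide as circles must come from the same $f^i_{s,t_0}$. Such a collision is precisely the assertion that the two inversively dual circles cut out by $f^i_{s,t_0}$ are equal, i.e.\ that a circle in $\mathcal{A}^i_{t_0}$ is inversively dual to itself.

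Combining these steps, ramification at $t_0\neq 0$ is equivalent to vanishing of at least one of the four discriminants $\mathrm{disc}_x\,f^i_{s,t}$ at $t_0$, which is in turn equivalent to inversive self-duality in $\mathcal{A}^i_{t_0}$. Each discriminant is a polynomial in $t$ that is nonzero at $t=1$ (by Remark~\ref{rem:coaklay distinct roots} together with transversality from Proposition~\ref{prop:transverse cones}), so the combined ramification locus away from $t=0$ is cut out by a nonzero polynomial in $t$ and is therefore finite. The technical heart is the uniqueness of the sign vector $\epsilon$, which rules out ``cross-pair'' collisions between circles indexed by different $f^i_{s,t_0}$; edge cases such as tangent circles of infinite radius (Remark~\ref{rem:coaklay distinct roots}(ii)) or radius zero must be handled separately, and are ruled out for $t_0\neq 0$ by the non-tangency hypothesis on the $C(p_j)$ together with the generic étaleness established at $t=1$.
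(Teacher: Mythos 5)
Your proof is correct and takes a somewhat different route from the paper's, and the difference is worth noting. The paper's proof passes through Proposition~\ref{prop:transverse cones}: it observes that $\mc{A}^i \to \Spec k[t]$ is ramified at $t_0\neq 0$ if and only if the shrunken circle $C(\pcoor{1:-2a_i:-2b_i:a_i^2+b_i^2-t_0^2r_i^2})$ is tangent to some $C(p_j)$ with $j\neq i$, and then invokes Remark~\ref{rem:coaklay distinct roots} to identify this tangency condition with vanishing of the discriminant of some $f^i_{s,t}$, concluding with the observation that these discriminants are polynomials in $t$ that are not identically zero (as $t=1$ is unramified). Your argument replaces the appeal to Proposition~\ref{prop:transverse cones} and Remark~\ref{rem:coaklay distinct roots} with a direct analysis of Coaklay's parameterization: the sign-vector $\epsilon$ associated to a tangent circle $C(q)$ with $\rho\neq 0$ is determined up to global flip, so $C(q)$ arises as a root of exactly one $f^i_{s,t_0}$ (up to $s\mapsto -s$); hence any collision among the eight Coaklay solutions is a within-pair collision, i.e.\ a discriminant vanishing, i.e.\ inversive self-duality. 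This sign-vector argument is an actual improvement: it explicitly rules out ``cross-pair'' collisions (two circles coming from different $f^i_{s,t_0}$ coinciding), a possibility the paper's proof dispatches only indirectly through the transversality characterization and the somewhat terse wording of Remark~\ref{rem:coaklay distinct roots}. Your finiteness argument (finite flat family of degree $8$ has finite branch locus, with the discriminants each nonzero at $t=1$) is equivalent to the paper's but cleaner in separating the flatness step from the discriminant step. The edge cases you flag (infinite radius, $\rho=0$) are the genuine technical loose ends here, and the paper's proof glosses over them as well; for a full argument one would want to verify that neither occurs at any $t_0\neq 0$ under the standing non-tangency and generic-position hypotheses, or to handle them by a separate degeneration argument.
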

\begin{proof}
By Proposition~\ref{prop:transverse cones}, our assumption that no two circles among $C(p_1),C(p_2),C(p_3)$ are tangent ensures that the cones $Q(p_1),Q(p_2),Q(p_3)$ meet transversely over $\overline{k}$. In particular, the family $\mc{A}^i\to\Spec{k[t]}$ is unramified at $t=1$. By the same reasoning, $\mc{A}^i\to\Spec{k[t]}$ is unramified at $t\neq 0$ whenever $C(\pcoor{1:-2a_i:-2b_i:a_i^2+b_i^2-t^2r_i^2})$ is not tangent to $C(p_j)$ for $i\neq j$. 

Our goal is now to understand the ramification locus. The fiber above $t=0$ consists of four double points (compare with Section~\ref{sec:CCP}), which correspond to the confluence of the four pairs of degeneratively dual circles. The family $\mc{A}^i\to\Spec{k[t]}$ branches at $t\neq 0$ if and only if $C(\pcoor{1:-2a_i:-2b_i:a_i^2+b_i^2-t^2r_i^2})$ is tangent to $C(p_j)$ for some $j\neq i$. In terms of Coaklay's equations, this ramification happens only if the discriminant of some $f^i_{s,t}$ vanishes (see Remark~\ref{rem:coaklay distinct roots}). That is, the ramification of $\mc{A}^i$ away from $t=0$ consists solely of circles coinciding with their inversive duals. Moreover, this can only happen finitely often, since the discriminant of each $f^i_{s,t}$ is a polynomial in $t$.
\end{proof}

\begin{rem}\label{rem:family of circles}
Since the ramification locus of $\mc{A}^i\to\Spec{k[t]}$ is finite by Lemma~\ref{lem:ramification}, it follows from Lemma~\ref{lem:finiteness} that $\mc{A}^i\to\Spec{k[t]}$ is finite and flat. Our assumptions ensure that the circles tangent to $C(p_1),C(p_2),C(p_3)$ are geometrically distinct, so the fiber $\mc{A}^i_1$ consists of 8 geometrically distinct points. Thus $\mc{A}^i$ is (geometrically) an 8-sheeted cover of $\mb{A}^1_k$. Lemma~\ref{lem:ramification} implies that these sheets fall into two sets $S_1,S_2$, each containing four sheets, such that $\varsigma_1\cap \varsigma_2=\varnothing$ for each $\varsigma_1\in S_1$ and $\varsigma_2\in S_2$.
\end{rem}

Based on this description, it looks like $\mc{A}^i$ has two connected components of degree four, with each component consisting of a pair of degeneratively dual circles and their inversive duals. If this were the case, then we could use the familial local degree to prove that the sum of the local indices of two inversively dual pairs of degeneratively dual circles is an even multiple of the hyperbolic form. 

The issue is that the sheets of our branched cover $\pi:\mc{A}^i\to\mb{A}^1_k$ are given by an algebraic function that does not necessarily define a morphism. Each sheet defines a set-theoretic section $\varsigma:\mb{A}^1_k\to\mc{A}^i$. If $\varsigma$ is in fact a morphism, then the fact that $\pi\circ\varsigma$ is closed and $\pi$ is separated implies that $\varsigma$ is a closed immersion. It would then follow that $\mc{A}^i$ has eight irreducible components, and Lemma~\ref{lem:ramification} would imply that $\mc{A}^i$ indeed has two connected components of degree four. In this hypothetical scenario, we can apply the familial local degree to compute the sum of local indices over the fibers in one of these connected components.

\begin{lem}\label{lem:sum of degen duals}
Let $C(q),C(q')$ be inversively dual circles tangent to $C(p_1),C(p_2),C(p_3)$. Assume that $\mc{A}^i$ consists of two connected components of degree four as detailed in Remark~\ref{rem:family of circles}. Let $Y\subset\mc{A}^i$ be the connected component to which $C(q),C(q')$ belong, and denote $Y_0=\{C(d),C(d')\}$. Assume that $k(d)\cong k(d')$, and that $k(d)/k$ is separable. Then
\[\ind_q\sigma+\ind_{q'}\sigma+\ind_{\vartheta_i q}\sigma+\ind_{\vartheta_i q'}\sigma=2\Tr_{k(d)/k}\mb{H}.\]
\end{lem}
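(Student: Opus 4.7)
The plan is to apply the familial local $\mb{A}^1$-degree (Theorem~\ref{thm:harder}) to the family $\mc{A}^i\to\Spec{k[t]}$, using $t=0$ as the special fiber (the CCP setting) and $t=1$ as the perturbed fiber (the original CCC problem). Remark~\ref{rem:family of circles} tells us that $\mc{A}^i\to\Spec{k[t]}$ is finite and flat and that its ramification is confined to a finite set of closed points (by Lemma~\ref{lem:ramification}); this is exactly the hypothesis of Theorem~\ref{thm:harder}. Writing $f$ for the section of $\mc{O}(2)\oplus\mc{O}(2)\oplus\mc{O}(1)^{\oplus 2}|_{t=0}$ obtained by degenerating $C(p_i)$ to its center (which lives in $V(p_i)$, the reduced locus of the double plane $Q^i_0(p_i)$) and $\tilde{f}$ for the section at $t=1$ corresponding to the original triple of cones, Theorem~\ref{thm:harder} applied to the connected component $Y$ yields
\[
\sum_{p\in Y_0}\deg_p^{\mb{A}^1}(f)=\sum_{z\in Z}\deg_z^{\mb{A}^1}(\tilde{f}),
\]
where $Z\subseteq \tilde{f}^{-1}(0)$ is the set of closed points of $Y_1$.

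Next I would identify the two sides geometrically. By hypothesis, $Y$ is one of the two degree-four components of $\mc{A}^i$, and by construction this component contains precisely the inversively dual pair $C(q),C(q')$ together with their degenerative duals $C(\vartheta_i q),C(\vartheta_i q')$ (the other component consists of the remaining two inversively dual pairs together with their degenerative duals). Thus the right-hand side is exactly $\ind_q\sigma+\ind_{q'}\sigma+\ind_{\vartheta_i q}\sigma+\ind_{\vartheta_i q'}\sigma$. For the left-hand side, $Y_0=\{C(d),C(d')\}$ by assumption, so
\[
\sum_{p\in Y_0}\deg_p^{\mb{A}^1}(f)=\ind_d\sigma_0+\ind_{d'}\sigma_0,
\]
where $\sigma_0$ is the section determined by degenerating $C(p_i)$ to a point.

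To evaluate these CCP local indices, I would invoke Proposition~\ref{prop:CCP gives H}. The two non-degenerating cones $Q(p_j),Q(p_k)$ (for $\{i,j,k\}=\{1,2,3\}$) are reduced, while $Q^i_0(p_i)=V(p_i)_{\mr{red}}^{\,2}$ is a double plane; the fact that the fibers over a neighborhood of $t=0$ each specialize into two closed points of $Y_0$ is precisely transversality of the reduced cones $Q^i_0(p_j)_{\mr{red}}$ at $d$ and $d'$. Combined with the separability assumption on $k(d)/k\cong k(d')/k$, Proposition~\ref{prop:CCP gives H} gives $\ind_d\sigma_0=\Tr_{k(d)/k}\mb{H}$ and $\ind_{d'}\sigma_0=\Tr_{k(d')/k}\mb{H}$. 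Since $k(d)\cong k(d')$, the left-hand side equals $2\Tr_{k(d)/k}\mb{H}$, completing the proof.

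The main obstacle is the verification that $Y_1$ really consists of the four circles $\{q,q',\vartheta_i q,\vartheta_i q'\}$ and that the two points of $Y_0$ each have residue degree matching a pair of degeneratively dual circles. This is essentially unpacking the definitions: $\vartheta_i$ identifies pairs of circles in $\mc{A}^i_1$ that collapse together as $t\to 0$, and inversive duality preserves the connected component (as established in Lemma~\ref{lem:ramification}, where ramification away from $t=0$ was shown to occur only between inversively dual circles). Thus each connected component of degree four carries exactly one inversively dual pair and its image under $\vartheta_i$, and the two points of $Y_0$ record the two surviving conjugate pairs after degeneration. Once this combinatorial picture is granted, the rest is a bookkeeping exercise in Theorem~\ref{thm:harder} and Proposition~\ref{prop:CCP gives H}.
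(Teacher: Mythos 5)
Your proposal is correct and follows essentially the same route as the paper: apply the familial local $\mb{A}^1$-degree (Theorem~\ref{thm:harder}) to the connected component $Y$ of $\mc{A}^i$, identify $Y_1=\{C(q),C(q'),\vartheta_iC(q),\vartheta_iC(q')\}$ via Remark~\ref{rem:family of circles}, and evaluate the $t=0$ contributions at $\{C(d),C(d')\}$ via Proposition~\ref{prop:CCP gives H}. The only cosmetic difference is that the paper first invokes \cite[Theorem 1.3]{BBMMO21} to reduce to the case $k(d)=k$ and then applies Proposition~\ref{prop:CCP gives H} in the rational case, whereas you apply Proposition~\ref{prop:CCP gives H} directly (which already carries the trace $\Tr_{k(d)/k}$); both are valid.
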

\begin{proof}
By~\cite[Theorem 1.3]{BBMMO21}, we may assume that $k(d)=k$. Remark~\ref{rem:family of circles} implies that $\mc{A}^i$ satisfies the criteria of Theorem~\ref{thm:harder}. Remark~\ref{rem:family of circles} furthermore characterizes the fiber $Y_1=\{C(q),C(q'),\vartheta_iC(q),\vartheta_iC(q')\}$, so we have 
\[\ind_q\sigma+\ind_{q'}\sigma+\ind_{\vartheta_i q}\sigma+\ind_{\vartheta_i q'}\sigma=\ind_d\sigma+\ind_{d'}\sigma.\]
Now $\ind_d\sigma=\ind_{d'}\sigma=\mb{H}$ by Proposition~\ref{prop:CCP gives H}.
\end{proof}

\subsection{Cube of Apollonius}
Our next goal is to show how Lemma~\ref{lem:sum of degen duals}, paired with the convenient symmetry of the circles of Apollonius, would allow us to relate the local indices of inversively dual circles. We begin by studying the symmetry of the circles of Apollonius.

Let $\mc{A}$ denote the set of circles tangent to $C(p_1),C(p_2),C(p_3)$. Note that $\mc{A}=\mc{A}^i_1$ for each $i$. Degenerative duality gives us three involutions $\vartheta_1,\vartheta_2,\vartheta_3:\mc{A}\to\mc{A}$. We will show that the eight elements of $\mc{A}$ correspond to the vertices of a cube such that each $\vartheta_i$ is a reflection of the cube across a central plane parallel to one of the faces. We illustrate this in Figure~\ref{fig:cube}, with the relevant circles labeled in Figure~\ref{fig:circle labels}.

\begin{lem}\label{lem:cube of apollonius}
There exists a cube with vertices $\mc{A}$ such that each $\vartheta_i$ is a reflection of the cube across a central plane parallel to one of its faces.
\end{lem}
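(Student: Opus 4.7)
The strategy is to extract, from the matrix representations of Proposition~\ref{prop:degen matrices}, a free and transitive action of $(\mb{Z}/2)^3$ on $\mc{A}$, and then to relabel $\mc{A}$ as $\{0,1\}^3$ so that each $\vartheta_i$ becomes one of the three face reflections of the unit cube.

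First I would label $\mc{A}=\{1,\ldots,8\}$ and identify this set with $\mb{F}_2^3$ via binary expansion, $n-1=b_0+2b_1+4b_2\leftrightarrow(b_0,b_1,b_2)$. Reading off the three permutations in Proposition~\ref{prop:degen matrices}, one checks that each acts on $\mc{A}\cong\mb{F}_2^3$ as translation by a nonzero vector: the first matrix (reversal $n\mapsto 9-n$) is translation by $(1,1,1)$, the second ($n\mapsto n\pm 4$) is translation by $(0,0,1)$, and the third ($n\mapsto n\pm 2$) is translation by $(0,1,0)$. In particular each $\vartheta_i$ is a fixed-point-free involution, and any two of them commute, because translations on $\mb{F}_2^3$ do.

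Next I would show that the three translation vectors $(1,1,1),(0,0,1),(0,1,0)$ are $\mb{F}_2$-linearly independent, hence form a basis of $\mb{F}_2^3$. This is a two-line verification: no two of the vectors coincide, and their sum $(1,0,0)$ is nonzero. Consequently the subgroup $G:=\langle\vartheta_1,\vartheta_2,\vartheta_3\rangle\leq S_\mc{A}$ is isomorphic to $(\mb{Z}/2)^3$ and acts freely and transitively on $\mc{A}$, which makes $\mc{A}$ a $G$-torsor.

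Finally I would transport the cube structure from $\mb{F}_2^3$ to $\mc{A}$. Let $\Phi\colon G\xrightarrow{\sim}\mb{F}_2^3$ be the unique $\mb{F}_2$-linear isomorphism sending $\vartheta_i$ to the standard basis vector $e_i$, and let $\psi\colon\mc{A}\xrightarrow{\sim}\mb{F}_2^3$ be any $\Phi$-equivariant bijection (which exists because $\mc{A}$ is a $G$-torsor). Then $\psi$ identifies the elements of $\mc{A}$ with the vertices $\{0,1\}^3$ of the unit cube in such a way that each $\vartheta_i$ acts by translation by $e_i$, i.e.\ by the reflection of the cube across the central plane $\{x_i=\tfrac{1}{2}\}$ parallel to the face $\{x_i=0\}$. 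The only nontrivial step is verifying linear independence of the three translation vectors in $\mb{F}_2^3$, which reduces to a short arithmetic check on the matrices of Proposition~\ref{prop:degen matrices}.
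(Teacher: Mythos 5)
Your proposal is correct and takes a genuinely different route from the paper's. The paper proves four properties (no fixed points, distinct images, commutativity, $\vartheta_1\vartheta_2\vartheta_3$ fixed-point-free) directly from the matrices of Proposition~\ref{prop:degen matrices}, assembles them into an abstract graph on $\mc{A}$, and then invokes a graph-theoretic characterization (connected bipartite trivalent on eight vertices, really the Cayley-graph structure) to recognize the cube. You instead read off from the \emph{same} matrices that, under the binary labeling $n-1\leftrightarrow(b_0,b_1,b_2)$, each $\vartheta_i$ is a translation on $\mb{F}_2^3$ by $(1,1,1)$, $(0,0,1)$, $(0,1,0)$ respectively, check that these three vectors form an $\mb{F}_2$-basis, and conclude that $\langle\vartheta_1,\vartheta_2,\vartheta_3\rangle\cong(\mb{Z}/2)^3$ acts freely and transitively on $\mc{A}$. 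From there the cube structure is transported by any equivariant bijection. This is tighter than the paper's argument: the torsor structure gives the paper's (i)--(iv) for free, avoids reliance on a graph-theoretic uniqueness statement, and delivers immediately the reinterpretation that $\mc{A}$ is a $(\mb{Z}/2\mb{Z})^3$-torsor, which the paper only records afterward as a separate remark. Your check of linear independence (distinct, nonzero, sum nonzero) is also a correct criterion specifically over $\mb{F}_2$ for three vectors, as the only possible dependence among distinct nonzero vectors has all coefficients equal to one.
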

\begin{proof}
The cubical graph is the unique connected bipartite trivalent graph. Let $G$ be the graph whose set of vertices is $\mc{A}$, and where two elements of $\mc{A}$ share an edge if and only if they are degeneratively dual to one another via some $\vartheta_i$. We will prove the following by appealing to Proposition~\ref{prop:degen matrices}:
\begin{enumerate}[(i)]
\item If $C\in\mc{A}$, then $\vartheta_i(C)\neq C$ for all $1\leq i\leq 3$.

\textit{Proof.} Each $\vartheta_i$ does not fix any standard basis vector.

\item If $C\in\mc{A}$, then $\vartheta_i(C)\neq\vartheta_j(C)$ for $i\neq j$.

\textit{Proof.} For each $1\leq n\leq 8$, the vectors $\vartheta_i\bm{e}_n$ and $\vartheta_j\bm{e}_n$ are distinct.

\item If $C\in\mc{A}$, then $\vartheta_i\vartheta_j(C)=\vartheta_j\vartheta_i(C)$.

\textit{Proof.} Each pair of the three matrices in Proposition~\ref{prop:degen matrices} commute.

\item If $C\in\mc{A}$, then $\vartheta_1\vartheta_2\vartheta_3(C)\neq C$.

\textit{Proof.} As a matrix, we have
\[\vartheta_1\vartheta_2\vartheta_3=\begin{psmallmatrix}
& 1 &&&&&&\\
1 &&&&&&&\\
&&& 1 &&&&\\
&& 1 &&&&&\\
&&&&& 1 &&\\
&&&& 1 &&&\\
&&&&&&& 1\\
&&&&&& 1 &
\end{psmallmatrix}.\]
This does not fix $\bm{e}_n$ for any $1\leq n\leq 8$.
\end{enumerate}
Item (i) states that $G$ contains no loops, so that $G$ is indeed a graph. It follows from (ii) that $G$ is trivalent. Note that $\vartheta_i^2=\id_\mc{A}$ for all $i$, so (ii) and (iii) imply that any odd cycle in $G$ must be a 3-cycle obtained by applying $\vartheta_1,\vartheta_2,\vartheta_3$ in any order. It then follows from (iv) that $G$ contains no odd cycles, so $G$ is bipartite. Items (i) through (iv) imply that there are 8 operations $\{\vartheta_1^i\vartheta_2^j\vartheta_3^\ell\}_{i,j,\ell=0}^1$ that all yield distinct circles; since $\mc{A}$ has 8 vertices, this implies that $G$ is connected and is therefore the cubical graph.

Now thinking of $G$ as a cube, item (iii) states that each face of $G$ is given by two instances of $\vartheta_i$ on a pair of parallel edges and two instances of $\vartheta_j$ on the remaining pair of parallel edges. This in turn implies that all four instances of $\vartheta_i$ form a set of four parallel edges on $G$, and $\vartheta_i$ swaps the faces of $G$ that do not contain any of these four parallel edges.
\end{proof}

\def\ooo{\begin{tikzpicture}[scale=.75]
	\draw[very thick] (0,0) circle (.4);
    \draw[very thick] (1.05,.15) circle (.25);
    \draw[very thick] (.7,.8) circle (.1);
    \draw[color=red, very thick] (.566,.419) circle (.304);
\end{tikzpicture}}
\def\ioo{\begin{tikzpicture}[scale=.75]
	\draw[very thick] (0,0) circle (.4);
    \draw[very thick] (1.05,.15) circle (.25);
    \draw[very thick] (.7,.8) circle (.1);
    \draw[color=red, very thick] (.117,.266) circle (.691);
\end{tikzpicture}}
\def\oio{\begin{tikzpicture}[scale=.75]
	\draw[very thick] (0,0) circle (.4);
    \draw[very thick] (1.05,.15) circle (.25);
    \draw[very thick] (.7,.8) circle (.1);
    \draw[color=red, very thick] (.558,.552) circle (.385);
\end{tikzpicture}}
\def\ooi{\begin{tikzpicture}[scale=.75]
	\draw[very thick] (0,0) circle (.4);
    \draw[very thick] (1.05,.15) circle (.25);
    \draw[very thick] (.7,.8) circle (.1);
    \draw[color=red, very thick] (.843,.241) circle (.477);
\end{tikzpicture}}
\def\iio{\begin{tikzpicture}[scale=.75]
	\draw[very thick] (0,0) circle (.4);
    \draw[very thick] (1.05,.15) circle (.25);
    \draw[very thick] (.7,.8) circle (.1);
    \draw[color=red, very thick] (-.052,.505) circle (.908);
\end{tikzpicture}}
\def\ioi{\begin{tikzpicture}[scale=.75]
	\draw[very thick] (0,0) circle (.4);
    \draw[very thick] (1.05,.15) circle (.25);
    \draw[very thick] (.7,.8) circle (.1);
    \draw[color=red, very thick] (.479,-.191) circle (.915);
\end{tikzpicture}}
\def\oii{\begin{tikzpicture}[scale=.75]
	\draw[very thick] (0,0) circle (.4);
    \draw[very thick] (1.05,.15) circle (.25);
    \draw[very thick] (.7,.8) circle (.1);
    \draw[color=red, very thick] (.866,.384) circle (.548);
\end{tikzpicture}}
\def\iii{\begin{tikzpicture}[scale=.75]
	\draw[very thick] (0,0) circle (.4);
    \draw[very thick] (1.05,.15) circle (.25);
    \draw[very thick] (.7,.8) circle (.1);
    \draw[color=red, very thick] (.447,.088) circle (.856);
\end{tikzpicture}}

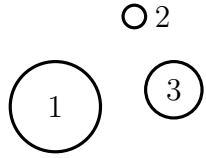
\begin{figure}
\begin{tikzpicture}[scale=1.5]
	\draw[very thick] (0,0) circle (.4);
    \draw[very thick] (1.05,.15) circle (.25);
    \draw[very thick] (.7,.8) circle (.1);
    \node at (0,0) {$1$};
    \node at (1.05,.15) {$3$};
    \node at (.95,.8) {$2$};
\end{tikzpicture}
\caption{Circle labels}
\label{fig:circle labels}
\end{figure}

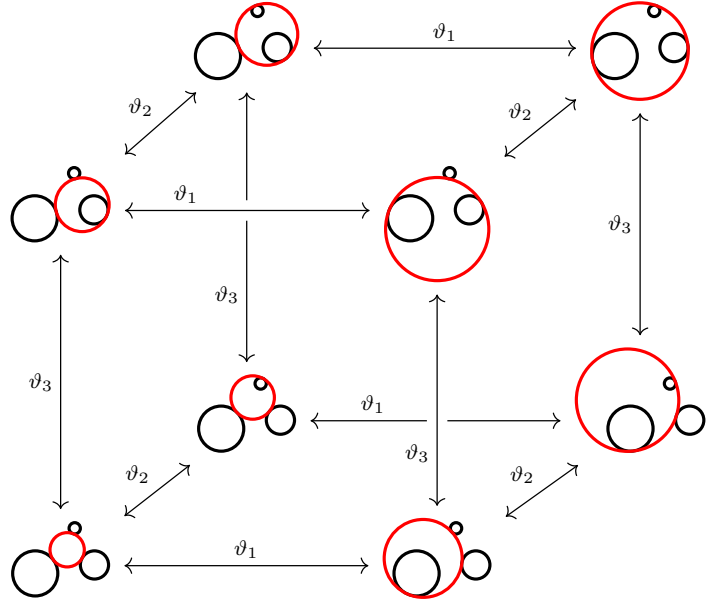
\begin{figure}
\begin{tikzcd}[row sep=scriptsize, column sep=scriptsize,every arrow/.append style={leftrightarrow}]
& \oii \arrow[rr, "\vartheta_1"] \arrow[from=dd, "\vartheta_3" near start] & & \iii \\
\ooi \arrow[ur, "\vartheta_2"] \arrow[rr, crossing over, "\vartheta_1" near start] & & \ioi \arrow[ur, "\vartheta_2"] \\
& \oio \arrow[rr, "\vartheta_1" near start] & & \iio \arrow[uu, "\vartheta_3"] \\
\ooo \arrow[ur, "\vartheta_2"] \arrow[rr, "\vartheta_1"] \arrow[uu, "\vartheta_3"] & & \ioo \arrow[uu, crossing over, "\vartheta_3" near start] \arrow[ur, "\vartheta_2"]\\
\end{tikzcd}
\caption{Cube of Apollonius}
\label{fig:cube}
\end{figure}

\begin{rem}
Phrased differently, Lemma~\ref{lem:cube of apollonius} states that the set of circles of Apollonius is a $(\mb{Z}/2\mb{Z})^3$-torsor.
\end{rem}

Over the reals, $\vartheta_i$ swaps internal and external tangency with respect to $C(p_i)$. Since inversive duality swaps internal and external tangency with respect to all three original circles, we see that inversive duality is equal to $\vartheta_1\vartheta_2\vartheta_3$. In terms of Figure~\ref{fig:cube}, inversively dual circles are body-diagonal in the cube of Apollonius. It turns out that this still holds over any field of characteristic not 2:

\begin{prop}\label{prop:inversive dual}
The circles $C$ and $\vartheta_1\vartheta_2\vartheta_3(C)$ are inversively dual for each $C\in\mc{A}$.
\end{prop}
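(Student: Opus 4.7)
The plan is to reduce the claim to the matrix computation already carried out in the proof of Lemma~\ref{lem:cube of apollonius}. There, we wrote down the $8\times 8$ permutation matrix representing $\vartheta_1\vartheta_2\vartheta_3$, and observed that it acts on the standard basis by the transpositions $\bm{e}_1\leftrightarrow\bm{e}_2$, $\bm{e}_3\leftrightarrow\bm{e}_4$, $\bm{e}_5\leftrightarrow\bm{e}_6$, $\bm{e}_7\leftrightarrow\bm{e}_8$. So the first step is simply to quote that matrix.

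The second step is to recall the labeling convention of Proposition~\ref{prop:degen matrices} (codified in Table~\ref{table:signs}). By construction, for each odd $n\in\{1,3,5,7\}$ the pair $\{\bm{e}_n,\bm{e}_{n+1}\}$ consists of precisely the two roots of a single polynomial $f_{(1,s_2,s_3)}$ (for the value of $(s_2,s_3)$ determined by $n$). By Definition~\ref{def:inversively dual}, this is exactly the definition of inversive duality. Therefore the permutation carried out by $\vartheta_1\vartheta_2\vartheta_3$ is the inversive-duality involution on $\mc{A}$, and the proposition follows.

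The only nontrivial content lies in having verified the matrix entries of the $\vartheta_i$, which was done by the symbolic computation referenced in Appendix~\ref{sec:degen code} and recorded in Proposition~\ref{prop:degen matrices}; after that, the composite is a one-line multiplication and the identification with inversive duality is a direct comparison of the resulting transpositions with the pairing built into the labeling. There is no further geometric input needed, so I expect no genuine obstacle beyond trusting the matrices already tabulated.
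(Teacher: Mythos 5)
Your proposal matches the paper's proof: both quote the permutation matrix for $\vartheta_1\vartheta_2\vartheta_3$ (which the paper computes in the proof of Lemma~\ref{lem:cube of apollonius} and re-displays here) and then observe that, under the labeling convention of Proposition~\ref{prop:degen matrices} and Table~\ref{table:signs}, the transpositions $\bm{e}_n \leftrightarrow \bm{e}_{n+1}$ for $n \in \{1,3,5,7\}$ are exactly the inversive-duality pairs from Definition~\ref{def:inversively dual}. This is the same argument, and it is correct.
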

\begin{proof}
In the context of Proposition~\ref{prop:degen matrices}, we have
\[\vartheta_1\vartheta_2\vartheta_3=\begin{psmallmatrix}
& 1 &&&&&&\\
1 &&&&&&&\\
&&& 1 &&&&\\
&& 1 &&&&&\\
&&&&& 1 &&\\
&&&& 1 &&&\\
&&&&&&& 1\\
&&&&&& 1 &
\end{psmallmatrix}.\]
By assumption, $\{\bm{e}_i,\bm{e}_{i+1}\}$ forms a pair of inversively dual circles for each $i=1,3,5,7$, so it follows that $\vartheta_1\vartheta_2\vartheta_3$ swaps inversively dual circles.
\end{proof}

We can now show that the sum of local indices of inversively dual circles is hyperbolic.

\begin{lem}\label{lem:inversive sum to h}
Let $C(q),C(q')\in\mc{A}$ be inversively dual to one another. Assume that $C(q)\neq C(q')$. Further assume that $k(q)\cong k(q')$, that $k(q)/k$ is separable, and that for each $1\leq i\leq 3$, the extension $k(d)/k$ is separable for each double point $d\in\mc{A}^i_0$. Then
\[\ind_q\sigma+\ind_{q'}\sigma=\Tr_{k(q)/k}\mb{H}.\]
\end{lem}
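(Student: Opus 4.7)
The plan is to apply Lemma~\ref{lem:sum of degen duals} once for each $i \in \{1,2,3\}$ (using the hypothesized separability of all $k(d)/k$) and combine the three resulting equations with the global count of Theorem~\ref{thm:main}. By Proposition~\ref{prop:inversive dual}, $q' = \vartheta_1\vartheta_2\vartheta_3 q$; since the $\vartheta_i$ are commuting involutions (Proposition~\ref{prop:degen matrices}), $\vartheta_i q' = \vartheta_j\vartheta_k q$ whenever $\{i,j,k\} = \{1,2,3\}$. Consequently the six circles $\vartheta_1 q,\vartheta_2 q,\vartheta_3 q,\vartheta_1 q',\vartheta_2 q',\vartheta_3 q'$ are precisely the six vertices of the cube of Apollonius (Lemma~\ref{lem:cube of apollonius}) distinct from $q$ and $q'$.

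Summing the three instances of Lemma~\ref{lem:sum of degen duals} and substituting $\sum_{v} \ind_v\sigma = 4\mb{H}$ from Theorem~\ref{thm:main}, the left-hand side collapses to $2(\ind_q\sigma + \ind_{q'}\sigma) + 4\mb{H}$. On the right, each connected component of $\mc{A}^i \to \Spec k[t]$ has degree four, and its special fiber consists of two double points $d_i, d_i'$ with $k(d_i) \cong k(d_i')$ by assumption; the rank equation $4 = 2[k(d_i):k] + 2[k(d_i'):k]$ then forces $[k(d_i):k] = 1$, so $\Tr_{k(d_i)/k}\mb{H} = \mb{H}$. The summed equation thus reduces to
\[
2(\ind_q\sigma + \ind_{q'}\sigma) = 2\mb{H}.
\]
Moreover, since $q$ and $q'$ are distinct closed points arising as the two roots of the quadratic $f_s$ from Theorem~\ref{thm:coaklay}, both roots must be $k$-rational---otherwise the nontrivial Galois action on a quadratic extension would identify them as a single closed point of $\mb{P}^3_k$---so $k(q)=k$ and $\Tr_{k(q)/k}\mb{H} = \mb{H}$.

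The main obstacle is the final cancellation: passing from $2(\ind_q\sigma + \ind_{q'}\sigma) = 2\mb{H}$ to $\ind_q\sigma + \ind_{q'}\sigma = \mb{H}$ requires more care, because $\GW(k)$ can carry nontrivial 2-torsion in characteristic not 2 (e.g., over $\mb{F}_5$ the anisotropic form $\langle 1, 2\rangle$ satisfies $2\langle 1, 2\rangle = 2\mb{H}$ yet $\langle 1, 2\rangle \neq \mb{H}$). I expect to handle this either by invoking an appropriate 2-torsion-freeness hypothesis on $\GW(k)$, or---more concretely---by using the explicit formula of Lemma~\ref{lem:geometric interpretation} to show directly that $-\mathrm{Vol}(q)\mathrm{Vol}(q') \in (k^\times)^2$, which immediately gives the rank-two identity $\langle \mathrm{Vol}(q)\rangle + \langle\mathrm{Vol}(q')\rangle = \mb{H}$ without any division step.
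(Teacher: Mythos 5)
Your route is genuinely different from the paper's: you sum Lemma~\ref{lem:sum of degen duals} over all three values of $i$ and then feed in the global Euler class of Theorem~\ref{thm:main}, whereas the paper avoids the global count entirely and instead writes $2(\ind_q\sigma+\ind_{q'}\sigma)$ as an alternating sum of three four-term groupings, each of which is an instance of Lemma~\ref{lem:sum of degen duals} (see Figure~\ref{fig:sum hyperbolic}). Your algebraic bookkeeping of the first step is correct: $q'=\vartheta_1\vartheta_2\vartheta_3 q$, the six translates $\vartheta_iq,\vartheta_iq'$ are exactly the remaining vertices of the cube, and the sum does collapse to $2(\ind_q\sigma+\ind_{q'}\sigma)+4\mb{H}=2\sum_i\Tr_{k(d_i)/k}\mb{H}$.

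However, the passage from there to the lemma has two genuine gaps.

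First, the claim that $q$ and $q'$ are $k$-rational is wrong, and the argument for it fails: Coaklay's polynomial $f_s$ has coefficients in $k(r_1,r_2,r_3)$, not in $k$, since its coefficients involve the square roots $r_i$. So the fact that the two roots of $f_s$ give distinct closed points of $\mb{P}^3_k$ does not force those points to be $k$-rational. Indeed the lemma's hypotheses ($k(q)\cong k(q')$, $k(q)/k$ separable) would be vacuous if $k(q)=k$ were automatic. Relatedly, your assertion that the rank equation forces $[k(d_i):k]=1$ is not self-contained: you need to know that the special fiber $Y_0$ actually consists of \emph{two distinct} double points rather than a single degree-2 double point, and the identity $4[k(q):k]=4\sum_i[k(d_i):k]-8$ couples the unknown degrees together rather than pinning any one of them down.

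Second, the divide-by-2 problem you flag at the end is real, and worth noting: the paper's own proof ends by showing $2(\ind_q\sigma+\ind_{q'}\sigma)$ is an even multiple of $\mb{H}$ and asserting this suffices, which also silently divides by $2$ in $\GW(k)$. Your observation that $\GW(k)$ has $2$-torsion (e.g.\ over finite fields) is a legitimate concern for both arguments; your second proposed fix, directly verifying $-\Vol(q)\Vol(q')\in(k^\times)^2$ via Lemma~\ref{lem:geometric interpretation}, would be a cleaner and more robust resolution than the division step, and would be a genuine improvement if carried out.
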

\begin{proof}
By the running assumption that the circles of Apollonius $\mc{A}$ are geometrically distinct, together with our assumption that $k(q)\cong k(q')$, the form $\ind_q\sigma+\ind_{q'}\sigma$ has rank $2[k(q):k]$. It thus suffices to prove that $\ind_q\sigma+\ind_{q'}\sigma$ is hyperbolic. We will actually show that $2(\ind_q\sigma+\ind_{q'}\sigma)$ is an even multiple of a hyperbolic form, which will also suffice.

By Lemma~\ref{lem:sum of degen duals}, the sum of local indices of the four circles on any two body diagonal edges of the cube of Apollonius is an even multiple of the hyperbolic form (see Figure~\ref{fig:body diagonal}). By adding or subtracting local indices along three edges and their body diagonals, we can express $2(\ind_q\sigma+\ind_{q'}\sigma)$ in terms amenable to Lemma~\ref{lem:sum of degen duals} (see Figure~\ref{fig:sum hyperbolic}). Explicitly,
\begin{align*}
2(\ind_q\sigma+\ind_{q'}\sigma)&=2(\ind_q\sigma+\ind_{\vartheta_1\vartheta_2\vartheta_3q}\sigma)\\
&=\ind_q\sigma+\ind_{\vartheta_1q}\sigma-(\ind_{\vartheta_1q}\sigma+\ind_{\vartheta_1\vartheta_3q}\sigma)+\ind_{\vartheta_1\vartheta_3q}\sigma+\ind_{\vartheta_1\vartheta_2\vartheta_3q}\sigma\\
&+\ind_q\sigma+\ind_{\vartheta_2q}\sigma-(\ind_{\vartheta_2q}\sigma+\ind_{\vartheta_2\vartheta_3q}\sigma)+\ind_{\vartheta_2\vartheta_3q}\sigma+\ind_{\vartheta_1\vartheta_2\vartheta_3q}\sigma\\
&=(\ind_q\sigma+\ind_{\vartheta_1\vartheta_2\vartheta_3q}\sigma+\ind_{\vartheta_1q}\sigma+\ind_{\vartheta_2\vartheta_3q}\sigma)\\
&-(\ind_{\vartheta_1q}\sigma+\ind_{\vartheta_2\vartheta_3q}\sigma+\ind_{\vartheta_2q}\sigma+\ind_{\vartheta_1\vartheta_3q}\sigma)\\
&+(\ind_q\sigma+\ind_{\vartheta_1\vartheta_2\vartheta_3q}\sigma+\ind_{\vartheta_2q}\sigma+\ind_{\vartheta_1\vartheta_3q}\sigma).
\end{align*}
We conclude by noting that $\vartheta_2\vartheta_3q$ is the inversive dual of $\vartheta_1q$, and $\vartheta_1\vartheta_3q$ is the inversive dual of $\vartheta_2q$, so Lemma~\ref{lem:sum of degen duals} implies that $2(\ind_q\sigma+\ind_{q'}\sigma)$ is an even multiple of $\mb{H}$.
\end{proof}

\def\ooif{\begin{tikzpicture}[scale=.75,opacity=.3]
	\draw[very thick] (0,0) circle (.4);
    \draw[very thick] (1.05,.15) circle (.25);
    \draw[very thick] (.7,.8) circle (.1);
    \draw[color=red, very thick] (.843,.241) circle (.477);
\end{tikzpicture}}
\def\iiof{\begin{tikzpicture}[scale=.75,opacity=.3]
	\draw[very thick] (0,0) circle (.4);
    \draw[very thick] (1.05,.15) circle (.25);
    \draw[very thick] (.7,.8) circle (.1);
    \draw[color=red, very thick] (-.052,.505) circle (.908);
\end{tikzpicture}}
\def\ioif{\begin{tikzpicture}[scale=.75,opacity=.3]
	\draw[very thick] (0,0) circle (.4);
    \draw[very thick] (1.05,.15) circle (.25);
    \draw[very thick] (.7,.8) circle (.1);
    \draw[color=red, very thick] (.479,-.191) circle (.915);
\end{tikzpicture}}
\def\oiof{\begin{tikzpicture}[scale=.75,opacity=.3]
	\draw[very thick] (0,0) circle (.4);
    \draw[very thick] (1.05,.15) circle (.25);
    \draw[very thick] (.7,.8) circle (.1);
    \draw[color=red, very thick] (.558,.552) circle (.385);
\end{tikzpicture}}

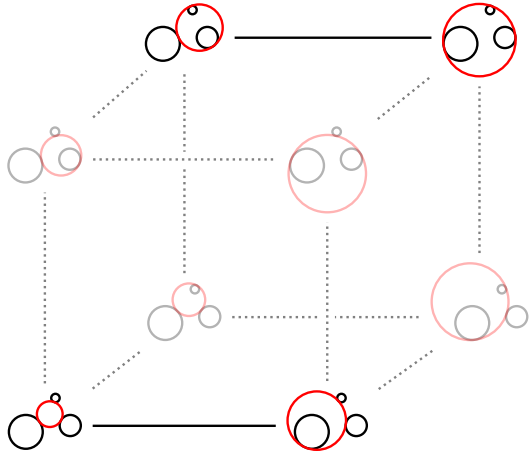
\begin{figure}
\adjustbox{scale=.75,center}{
\begin{tikzcd}[row sep=scriptsize, column sep=scriptsize,every arrow/.append style={dash, very thick}]
& \oii \arrow[rr] \arrow[from=dd, gray, dotted] & & \iii \\
\ooif \arrow[ur, gray, dotted] \arrow[rr, crossing over, gray, dotted] & & \ioif \arrow[ur, gray, dotted] \\
& \oiof \arrow[rr, gray, dotted] & & \iiof \arrow[uu, gray, dotted] \\
\ooo \arrow[ur, gray, dotted] \arrow[rr] \arrow[uu, gray, dotted] & & \ioo \arrow[uu, crossing over, gray, dotted] \arrow[ur, gray, dotted]\\
\end{tikzcd}}
\caption{Four body diagonal circles}
\label{fig:body diagonal}
\end{figure}

\begin{figure}
\adjustbox{scale=.75,center}{
\begin{tikzcd}[row sep=scriptsize, column sep=scriptsize,every arrow/.append style={dash, very thick}]
& \oii \arrow[rr, "+"scale=2] \arrow[from=dd, "-"scale=2, near start] & & \iii \\
\ooif \arrow[ur, gray, dotted] \arrow[rr, crossing over, gray, dotted] & & \ioi \arrow[ur, "+"scale=2] \\
& \oio \arrow[rr, gray, dotted] & & \iiof \arrow[uu, gray, dotted] \\
\ooo \arrow[ur, "+"scale=2] \arrow[rr, "+"scale=2] \arrow[uu, gray, dotted] & & \ioo \arrow[uu, crossing over, "-"scale=2, near end] \arrow[ur, gray, dotted]\\
\end{tikzcd}}
\caption{Sum of local indices of conjugate circles}
\label{fig:sum hyperbolic}
\end{figure}
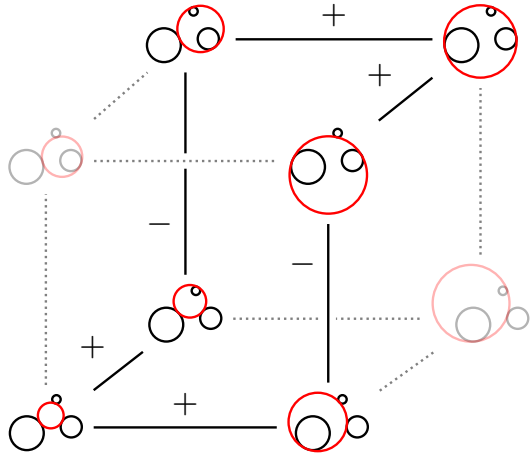

\begin{rem}\label{rem:program}
As a consequence of Lemma~\ref{lem:inversive sum to h}, we have a conditional new method for divining a geometric interpretation of $\ind_p\sigma$: any interpretation that sends inversively dual pairs to hyperbolic forms of the appropriate rank will give an enriched enumerative theorem about the circles of Apollonius.

There are two key assumptions preventing us from carrying out this program. The first is that we want our parameterization of the sheets of $\mc{A}^i\to\mb{A}^1_k$ to give us a decomposition of $\mc{A}^i$ into two connected components. The second assumption, which should be easier to resolve, is that $k(q)\cong k(q')$ for $C(q),C(q')$ inversively dual and $k(d)\cong k(d')$ for $C(d),C(d')\in\mc{A}^i_0$ the double circles to which $C(q),C(q')$ and their $i\textsuperscript{th}$ degenerative duals degenerate.
\end{rem}

\appendix
\section{Solving for the cone of tangent circles}\label{sec:sage}
We include a short piece of Sage code that performs the necessary calculation from Lemma~\ref{lem:cone}.
\begin{lstlisting}
var('x','y','z','a','b','r');
var(','.join('c%s'%i for i in range(3)));
var(','.join('A%s'%i for i in range(1,7)));
f = A1*x^2+A2*y^2+A3*z^2+A4*x*z+A5*y*z+A6*x*y;
f = f.subs(z == -a*x-b*y-2*r^2*c0);
f = expand(f.subs(x == c1+2*a*c0, y == c2+2*b*c0));
eqns = [f.coefficient(c0,2) == 4*(a^2+b^2-r^2),\
        f.coefficient(c1,2) == 1,\
        f.coefficient(c2,2) == 1,\
        f.coefficient(c0*c1,1) == 4*a,\
        f.coefficient(c0*c2,1) == 4*b,\
        f.coefficient(c1*c2,1) == 0];
solve(eqns, A1, A2, A3, A4, A5, A6)
\end{lstlisting}

\section{Degenerative duality}\label{sec:degen code}
In this appendix, we implement the equations of Definition~\ref{defn:coaklay family} to compute whether two circles are degeneratively dual through $C(p_i)$.

\begin{lstlisting}
var('x,t');
var(','.join('a%s'%i for i in range(1,4)));
var(','.join('b%s'%i for i in range(1,4)));
var(','.join('r%s'%i for i in range(1,4)));
a = [a1,a2,a3];
b = [b1,b2,b3];

def A_1(a,b,r,s):
    F = (a[0]-a[1])*(b[0]-b[2])-(a[0]-a[2])*(b[0]-b[1])
    return(((s[0]*r[0]-s[1]*r[1])*(b[0]-b[2])\
           -(s[0]*r[0]-s[2]*r[2])*(b[0]-b[1]))/F)

def B_1(a,b,r,s):
    F = (a[0]-a[1])*(b[0]-b[2])-(a[0]-a[2])*(b[0]-b[1])
    return(((s[0]*r[0]-s[2]*r[2])*(a[0]-a[1])\
           -(s[0]*r[0]-s[1]*r[1])*(a[0]-a[2]))/F)

def A_2(a,b,r,s):
    D = a[0]^2-a[1]^2+b[0]^2-b[1]^2-(r[0]^2-r[1]^2)
    E = a[0]^2-a[2]^2+b[0]^2-b[2]^2-(r[0]^2-r[2]^2)
    F = (a[0]-a[1])*(b[0]-b[2])-(a[0]-a[2])*(b[0]-b[1])
    return(((b[0]-b[2])*D-(b[0]-b[1])*E)/(2*F))

def B_2(a,b,r,s):
    D = a[0]^2-a[1]^2+b[0]^2-b[1]^2-(r[0]^2-r[1]^2)
    E = a[0]^2-a[2]^2+b[0]^2-b[2]^2-(r[0]^2-r[2]^2)
    F = (a[0]-a[1])*(b[0]-b[2])-(a[0]-a[2])*(b[0]-b[1])
    return(((a[0]-a[1])*E-(a[0]-a[2])*D)/(2*F))

def R(a,b,r,s):
    A1 = A_1(a,b,r,s)
    A2 = A_2(a,b,r,s)
    B1 = B_1(a,b,r,s)
    B2 = B_2(a,b,r,s)
    m = A2+A1*s[0]*r[0]-a[0]
    n = B2+B1*s[0]*r[0]-b[0]
    f = (x-s[0]*r[0])^2*(1-A1^2-B1^2)\
       -2*(m*A1+n*B1)*(x-s[0]*r[0])-m^2-n^2
    rts = [u.right_hand_side() for u in solve(f == 0, x)]
    return(rts)

def degen(a,b,r,signs):
    X = [];
    Y = [];
    Z = [];

    for s in signs:
        A1 = A_1(a,b,r,s);
        A2 = A_2(a,b,r,s);
        B1 = B_1(a,b,r,s);
        B2 = B_2(a,b,r,s);
        for Rad in R(a,b,r,s):
            alpha = A1*Rad+A2;
            beta  = B1*Rad+B2;
            X.append(-2*alpha);
            Y.append(-2*beta);
            Z.append(alpha^2+beta^2-Rad^2);

    W = [];
    M = [];
    for i in range(8):
        W.append([X[i].subs(t=0),\
                  Y[i].subs(t=0),\
                  Z[i].subs(t=0)]);
    for i in range(8):
        M.append([int(W[i]==W[j]) for j in range(8)]);
    return(Matrix(M)-identity_matrix(8))

signs = [];
for i in range(4):
    s = [1];
    for j in format(i,'02b'):
        s.append((-1)^int(j));
    signs.append(s);

for ii in range(3):
    r = [r1,r2,r3];
    print('degenerative duality for i =',ii+1)
    r[ii] = t*r[ii];
    print(degen(a,b,r,signs),'\n')
\end{lstlisting}

This code outputs three symmetric $8\times 8$ matrices, which we include below. Each column and row corresponds to a solution to Coaklay's equations. Columns/rows 1 and 2 correspond to the two solutions at $s=(1,1,1)$, 3 and 4 to the two solutions at $s=(1,1,-1)$, 5 and 6 to the two solutions at $s=(1,-1,1)$, and 7 and 8 to the two solutions at $s=(1,-1,-1)$. The $(m,n)^\text{th}$ entry of the matrix corresponding to the degenerative duality $\vartheta_i$ is 1 if the $m^\text{th}$ and $n^\text{th}$ solutions coincide when $r_i=0$ and is 0 otherwise. Trivially, each diagonal entry will be 1, so we subtract off the identity matrix to make the matrices for degenerative duality more readable.

\begin{lstlisting}
degenerative duality for i = 1
[0 0 0 0 0 0 0 1]
[0 0 0 0 0 0 1 0]
[0 0 0 0 0 1 0 0]
[0 0 0 0 1 0 0 0]
[0 0 0 1 0 0 0 0]
[0 0 1 0 0 0 0 0]
[0 1 0 0 0 0 0 0]
[1 0 0 0 0 0 0 0] 

degenerative duality for i = 2
[0 0 0 0 1 0 0 0]
[0 0 0 0 0 1 0 0]
[0 0 0 0 0 0 1 0]
[0 0 0 0 0 0 0 1]
[1 0 0 0 0 0 0 0]
[0 1 0 0 0 0 0 0]
[0 0 1 0 0 0 0 0]
[0 0 0 1 0 0 0 0] 

degenerative duality for i = 3
[0 0 1 0 0 0 0 0]
[0 0 0 1 0 0 0 0]
[1 0 0 0 0 0 0 0]
[0 1 0 0 0 0 0 0]
[0 0 0 0 0 0 1 0]
[0 0 0 0 0 0 0 1]
[0 0 0 0 1 0 0 0]
[0 0 0 0 0 1 0 0] 
\end{lstlisting}

\section{Local phylogeny of enriched enumerative geometry}\label{sec:phylogeny}
\newcolumntype{C}[1]{>{\centering}p{#1}}

The geometricity problem (Question~\ref{ques:geometric interpretation}) asks for a classification of enriched enumerative problems in terms of their local geometric interpretation. One issue with this question as stated is that what constitutes a ``valid'' geometric interpretation is subjective. As seen in this article, one can derive multiple intrinsically interesting local geometric descriptions for a single enriched enumerative problem. In order for the geometricity problem to become attackable, this subjectivity must be resolved.

One possible route forward is to develop not just a taxonomy, but rather a phylogeny of enriched enumerative problems. Perhaps enriched enumerative problems inherit local interpretations from their genus, family, order, and so on, with all problems belonging to the intersection-theoretic domain given by B\'ezout's theorem (see Section~\ref{sec:bezout as universal}). Expanding on Example~\ref{ex:taxa}, we will discuss a few potential phyla, which we put together in Figure~\ref{fig:phylogeny}. In the following figures, solid lines refer to established clades of problems, while dotted lines refer to conjectural relationships.

\subsection*{Rational curves on hypersurfaces}
In Example~\ref{ex:taxa} (i), we discussed how both lines on cubic surfaces and lines on quintic threefolds share a common geometric description in terms of Segre involutions~\cite{KW21,Pau20}. Conjecturally, one might expect another geometric description for lines on hypersurfaces that can also be applied to other rational curves. Levine and Pauli give various quadratic counts of twisted cubics on hypersurfaces and complete intersections (although they do not treat the local information)~\cite{LP22}, and the author has been working with Thomas Brazelton and Sabrina Pauli (BMP) to understand both the global enriched count and the local interpretation of conics on quintic threefolds. We put this phylum of ``rational curves on hypersurfaces'' together in Figure~\ref{fig:curves on surfaces}.

\begin{figure}
\begin{forest}
    for tree={
      if level=0{align=center}{% allow multi-line text and set alignment
        align={@{}C{30mm}@{}},
      },
      grow=east,
      draw,
      font=\footnotesize,
      edge path={
        \noexpand\path [draw, \forestoption{edge}] (!u.parent anchor) -- +(5mm,0) |- (.child anchor)\forestoption{edge label};
      },
      parent anchor=east,
      child anchor=west,
      l sep=10mm,
      tier/.wrap pgfmath arg={tier #1}{level()},
      edge={very thick},
      fill=white,
      rounded corners=2pt,
      drop shadow,
    }
   [Rational curves on hypersurfaces
            [Lines on hypersurfaces,edge=dotted
                [Lines on cubic surfaces \cite{KW21}]
                [Lines on quintic threefolds \cite{Pau20}]
            ]
            [Conics on quintic threefolds (BMP),edge=dotted]
            [Twisted cubics on hypersurfaces \cite{LP22},edge=dotted]
        ]
    \end{forest}
    \caption{Rational curves on hypersurfaces}\label{fig:curves on surfaces}
\end{figure}

\subsection*{Varieties meeting a specified locus}
In Example~\ref{ex:taxa} (iii), we explained how the local geometric interpretation of conics through eight lines in $\mb{P}^3$~\cite{DGGM21} can be applied more generally to plane curves of higher degree through larger collections of lines. These counts, along with the count of twisted cubics through twelve lines, are the subject of ongoing joint work of the author and Sabrina Pauli (MP). Counting twisted cubics meeting twelve lines is a ``space curves through lines'' problem, and one might expect such problems to be closely related to ``plane curves through lines'' problems.

Thinking of points as linear varieties, the count of rational curves through a fixed number of points appears, at least superficially, to be related to counting space curves through lines. Ongoing work of Jesse Kass, Marc Levine, Jake Solomon, and Kirsten Wickelgren (KLSW) gives an enriched count of rational curves through sets of points, with the local geometric interpretation being given by an enriched Welschinger invariant (see e.g.~\cite[Section 9]{PW20}). Could enriched Welschinger invariants provide an alternative geometric description for counts of (rational) space curves through lines? We illustrate this conjectural relationship in Figure~\ref{fig:curves linear}.
\begin{figure}
\begin{forest}
    for tree={
      if level=0{align=center}{% allow multi-line text and set alignment
        align={@{}C{30mm}@{}},
      },
      grow=east,
      draw,
      font=\footnotesize,
      edge path={
        \noexpand\path [draw, \forestoption{edge}] (!u.parent anchor) -- +(5mm,0) |- (.child anchor)\forestoption{edge label};
      },
      parent anchor=east,
      child anchor=west,
      l sep=10mm,
      tier/.wrap pgfmath arg={tier #1}{level()},
      edge={very thick},
      fill=white,
      rounded corners=2pt,
      drop shadow,
    }
    [Curves meeting linear spaces,edge=dotted
                [Rational curves through points (KLSW),edge=dotted]
                [Curves through lines,edge=dotted
                    [Conics through lines \cite{DGGM21}]
                    [Plane curves through lines (MP)]
                    [Space curves through lines,edge=dotted]
                ]
            ]
    \end{forest}
    \caption{Curves meeting linear spaces}\label{fig:curves linear}
\end{figure}

As discussed in Example~\ref{ex:taxa} (ii), Srinivasan--Wickelgren's enriched count of lines through codimension 2 planes form a family of enriched enumerative problems whose shared geometric interpretation is given by a generalization of the cross-ratio. Work of Brazelton on the Wronski problem~\cite{Bra22} shows that these same generalized cross-ratios can be used as a geometric interpretation for the count of $d$-planes meeting $(n-d)$-planes in $\mb{P}^n$. We thus obtain a family of problems of the form ``linear spaces meeting linear spaces,'' as shown in Figure~\ref{fig:varieties locus}.

Brazelton also indicates that the Wrosnki problem might have a second geometric interpretation in terms of the enriched Welschinger invariant of Kass--Levine--Solomon--Wickelgren. This would provide an intriguing connection between problems of the form ``linear spaces meeting linear spaces'' and problems of the form ``curves meeting linear spaces.'' Another potentially related result is Cotterill--Darago--Han's enriched Pl\"ucker formula for linear series on hyperelliptic curves~\cite{CDH20}. While this article does not give a geometric description for the relevant local indices, there are formulas for the local indices in terms of Wronskian determinants. This suggests that Brazelton's geometric interpretations could be applied to relate~\cite{CDH20} to the other problems listed in Figure~\ref{fig:varieties locus}.
\begin{figure}
\begin{forest}
    for tree={
      if level=0{align=center}{% allow multi-line text and set alignment
        align={@{}C{30mm}@{}},
      },
      grow=east,
      draw,
      font=\footnotesize,
      edge path={
        \noexpand\path [draw, \forestoption{edge}] (!u.parent anchor) -- +(5mm,0) |- (.child anchor)\forestoption{edge label};
      },
      parent anchor=east,
      child anchor=west,
      l sep=10mm,
      tier/.wrap pgfmath arg={tier #1}{level()},
      edge={very thick},
      fill=white,
      rounded corners=2pt,
      drop shadow,
    }
    [Varieties meeting specified locus
            [Curves meeting linear spaces,edge=dotted]
            [Linear spaces meeting linear spaces \cite{Bra22},edge=dotted
                [Lines through hyperplanes \cite{SW21}]
            ]
            [Linear series on hyperelliptic curves \cite{CDH20},edge=dotted]
        ]
    \end{forest}
    \caption{Varieties meeting specified locus}\label{fig:varieties locus}
\end{figure}

\subsection*{Tangency problems}
We now turn to problems whose local interpretation may be related to this article's treatment of the circles of Apollonius (McK22). Classically, the count of spheres tangent to four given spheres can be derived from the count of circles tangent to three given circles. It seems reasonable to expect a geometric interpretation analogous to the one given in Lemma~\ref{lem:geometric interpretation} to hold for the count of spheres tangent to four spheres. One could call this hypothetical family ``quadric tangency problems,'' since one is interested in counting quadric varieties that are tangent to a given collection of objects.

Speculatively, there should be a connection between Larson--Vogt's ``Qtype'' for bitangents to plane quartics~\cite[Definition 1.2]{LV21} and the geometric interpretations that would arise if Remark~\ref{rem:program} happens to work out. Given a circle $C(q)$ tangent to a given trio of circles $C(p_1),C(p_2),C(p_3)$, there should be an invariant $t_i(q)$ that records an enrichment of the tangency direction of $C(q)$ to $C(p_i)$. Moreover, if $C(q')$ is the inversive dual of $C(q)$, then it should hold that $\langle\prod_{i=1}^3 t_i(q)\rangle+\langle\prod_{i=1}^3 t_i(q')\rangle$ is hyperbolic. This product of tangency directions would then be analogous to the Qtype $\partial_L f(z_1)\cdot\partial_L f(z_2)$, which records the ``tangency directions'' of a bitangent $L$ to the plane quartic defined by $f$ at the two points $z_1,z_2$ of tangency. 

This suggests that while the count of bitangents to plane quartics is a ``linear tangency'' problem, there is a larger family of ``tangency'' problems encompassing both of these results (see Figure~\ref{fig:tangency problems}). We posit that such tangency problems can be characterized by local geometric interpretations that are products of ``tangency directions'' over the locus of tangency. This would be compelling evidence in favor of a phylogenetic approach to classifying enriched enumerative problems, since these shared geometric interpretations would arise from completely different calculations.

\begin{figure}
\begin{forest}
    for tree={
      if level=0{align=center}{% allow multi-line text and set alignment
        align={@{}C{30mm}@{}},
      },
      grow=east,
      draw,
      font=\footnotesize,
      edge path={
        \noexpand\path [draw, \forestoption{edge}] (!u.parent anchor) -- +(5mm,0) |- (.child anchor)\forestoption{edge label};
      },
      parent anchor=east,
      child anchor=west,
      l sep=10mm,
      tier/.wrap pgfmath arg={tier #1}{level()},
      edge={very thick},
      fill=white,
      rounded corners=2pt,
      drop shadow,
    }
    [Tangency problems
            [Linear tangency,edge=dotted
                [Bitangents to plane quartics \cite{LV21}]
            ]
            [Quadric tangency,edge=dotted
                [Circles of Apollonius (McK22)]
                [Spheres tangent to four spheres,edge=dotted]
            ]
        ]
    \end{forest}
    \caption{Tangency problems}\label{fig:tangency problems}
\end{figure}

\begin{figure}[p]
\rotatebox{-90}{
    \begin{forest}
    for tree={
      if level=0{align=center}{% allow multi-line text and set alignment
        align={@{}C{30mm}@{}},
      },
      grow=east,
      draw,
      font=\footnotesize,
      edge path={
        \noexpand\path [draw, \forestoption{edge}] (!u.parent anchor) -- +(5mm,0) |- (.child anchor)\forestoption{edge label};
      },
      parent anchor=east,
      child anchor=west,
      l sep=10mm,
      tier/.wrap pgfmath arg={tier #1}{level()},
      edge={very thick},
      fill=white,
      rounded corners=2pt,
      drop shadow,
    }
    [Local complete intersections \cite{McK21}
        [Tangency problems
            [Linear tangency,edge=dotted
                [Bitangents to plane quartics \cite{LV21}]
            ]
            [Quadric tangency,edge=dotted
                [Circles of Apollonius (McK22)]
                [Spheres tangent to four spheres,edge=dotted]
            ]
        ]
        [Rational curves on hypersurfaces
            [Lines on hypersurfaces,edge=dotted
                [Lines on cubic surfaces \cite{KW21}]
                [Lines on quintic threefolds \cite{Pau20}]
            ]
            [Conics on quintic threefolds (BMP),edge=dotted]
            [Twisted cubics on hypersurfaces \cite{LP22},edge=dotted]
        ]
        [Varieties meeting specified locus
            [Curves meeting linear spaces,edge=dotted
                [Rational curves through points (KLSW),edge=dotted]
                [Curves through lines,edge=dotted
                    [Conics through lines \cite{DGGM21}]
                    [Plane curves through lines (MP)]
                    [Space curves through lines,edge=dotted]
                ]
            ]
            [Linear spaces meeting linear spaces \cite{Bra22},edge=dotted
                [Lines through hyperplanes \cite{SW21}]
            ]
            [Linear series on hyperelliptic curves \cite{CDH20},edge=dotted]
        ]
    ]
    \end{forest}}
    \caption{Local phylogeny of enriched enumerative problems}\label{fig:phylogeny}
\end{figure}
\newpage

\bibliography{apollonius}{}
\bibliographystyle{alpha}
\end{document}